\documentclass[10pt,reqno]{amsart}

\usepackage{fullpage,amsmath,amsthm,amsfonts,amssymb,enumitem,amsaddr}
\usepackage{aliascnt}
\usepackage{xcolor}
\usepackage[colorlinks=true,linkcolor=blue,citecolor=blue,urlcolor=blue,filecolor=blue]{hyperref}
\usepackage[capitalize,nameinlink,noabbrev]{cleveref}

\theoremstyle{plain}
\newtheorem{thm}{Theorem}[section]
\newaliascnt{lem}{thm}
\newtheorem{lem}[lem]{Lemma}
\aliascntresetthe{lem}
\newaliascnt{cor}{thm}
\newtheorem{cor}[cor]{Corollary}
\aliascntresetthe{cor}
\newaliascnt{prop}{thm}
\newtheorem{prop}[prop]{Proposition}
\aliascntresetthe{prop}

\theoremstyle{definition}
\newaliascnt{ex}{thm}
\newtheorem{ex}[ex]{Example}
\aliascntresetthe{ex}
\newaliascnt{rem}{thm}
\newtheorem{rem}[rem]{Remark}
\aliascntresetthe{rem}
\newaliascnt{defn}{thm}
\newtheorem{defn}[defn]{Definition}
\aliascntresetthe{defn}
\newaliascnt{prob}{thm}
\newtheorem{prob}[prob]{Problem}
\aliascntresetthe{prob}

\setlist[enumerate]{font=\upshape}

\crefname{thm}{Theorem}{Theorems}
\Crefname{thm}{Theorem}{Theorems}
\crefname{lem}{Lemma}{Lemmas}
\Crefname{lem}{Lemma}{Lemmas}
\crefname{cor}{Corollary}{Corollaries}
\Crefname{cor}{Corollary}{Corollaries}
\crefname{prop}{Proposition}{Propositions}
\Crefname{prop}{Proposition}{Propositions}
\crefname{ex}{Example}{Examples}
\Crefname{ex}{Example}{Examples}
\crefname{rem}{Remark}{Remarks}
\Crefname{rem}{Remark}{Remarks}
\crefname{defn}{Definition}{Definitions}
\Crefname{defn}{Definition}{Definitions}
\crefname{prob}{Problem}{Problems}
\Crefname{prob}{Problem}{Problems}
\crefname{section}{Section}{Sections}
\Crefname{section}{Section}{Sections}

\newcommand{\N}{\mathbb N}
\newcommand{\Z}{\mathbb Z}
\newcommand{\R}{\mathbb R}

\newcommand{\Hyp}{\mathbb H^2}
\newcommand{\diam}{\operatorname{diam}}
\newcommand{\Pack}{\operatorname{Pack}}
\newcommand{\Vol}{\operatorname{Vol}}
\DeclareMathOperator{\arsinh}{arsinh}
\DeclareMathOperator{\asdim}{asdim}
\newcommand{\filleq}{\simeq_{\mathrm{fill}}}

\title{Sequence distortion for metric spaces}

\author{Ilya Kapovich}

\address{Department of Mathematics and Statistics, Hunter College of CUNY\newline
  \indent 695 Park Ave, New York, NY 10065, U.S.A.\newline
  \indent ORCID 0000-0002-7694-6236}
\email{ik535@hunter.cuny.edu}

\keywords{sequence distortion, snowflake metrics, quasi-isometry invariants, hyperbolic spaces, asymptotic cones}

\subjclass[2020]{Primary 51F30, 20F65; Secondary 53C17, 30L05, 20F18, 54E35}

\dedicatory{Dedicated to the memory of my brother Michael Kapovich (1963-2026)}

\date{}

\hypersetup{
  pdftitle={Sequence distortion for metric spaces},
  pdfauthor={Ilya Kapovich},
  pdfkeywords={sequence distortion, snowflake metrics, quasi-isometry invariants, hyperbolic spaces, asymptotic cones}
}

\begin{document}

\begin{abstract}
We introduce \emph{sequence distortion spectrum}, a quasi-isometry invariant recording the large-scale distance profiles of sequences indexed by $\N$ or $\Z$ in a metric space. For a rate function $f:\N\to(0,\infty)$, extended by $f(t)=0$ for integers $t\le 0$, a sequence $(p_n)$ in a metric space $X$ is \emph{$f$-distorted} if there exists an integer $C\ge1$ such that for all $m,n$ we have
\[
      \frac{1}{C}f(\lfloor \frac{1}{C}|n-m|-C\rfloor)\le d(p_n,p_m)\le C f(C|n-m|+C)+C.
\]
This definition implies that $f(N)=O(N)$. For rate functions, realizability depends only on the ambient quasi-isometry type and the growth type of $f$. We classify the possible power rates $f(N)=N^\alpha$ (where $0<\alpha\le 1)$ for Euclidean spaces: in $\R$ only the linear rate $\alpha=1$ occurs, while in $\R^k$, $k\ge2$, the realizable exponents are exactly $1/k<\alpha\le1$. For a geodesic $\delta$-hyperbolic space $X$, no power rate $N^\alpha$ with $0<\alpha<1$ occurs. The hyperbolic plane also realizes the logarithmic rate. An exponential packing bound for $X$ rules out every $o(\log N)$ rate, but a proper CAT$(-1)$ surface of unbounded geometry realizes a log--log rate. In an arbitrary simplicial tree, every realizable rate is linear up to constants. Finally, we construct two pairs of proper geodesic spaces: the first has equivalent basepoint packing functions and the second equivalent uniform packing functions; both pairs have equal asymptotic dimensions and filling-function growth classes, and isometric asymptotic cones at the chosen wedge points for every common scaling sequence and ultrafilter. Yet sequence distortion distinguishes each pair, and the second pair has bounded geometry.
\end{abstract}

\maketitle


\section{Introduction}\label{s:introduction}

Large-scale geometry studies finitely generated groups up to quasi-isometry. Any two finite generating sets yield quasi-isometric word metrics, and the Milnor--\v{S}varc lemma identifies, at large scale, a group acting properly and cocompactly by isometries on a proper geodesic space with that space. Gromov's seminal monograph \emph{Asymptotic invariants of infinite groups} \cite{GromovAsymptotic} gave definitive form to the modern program of classifying finitely generated groups up to quasi-isometry and developing invariants that detect quasi-isometric rigidity. Standard general references include \cite{BH,Roe,deLaHarpe,DrutuKapovich,CornulierdeLaHarpe,BuyaloSchroeder}.

Such invariants draw on several neighboring areas. Growth, Dehn functions, higher-dimensional filling invariants, and Gromov hyperbolicity have geometric origins. For proper spaces and finitely generated groups, ends and proper-homotopy invariants describe topology at infinity; coarse homology and cohomology, asymptotic dimension, Higson-corona methods, and coarse duality provide explicitly large-scale analogues of algebraic-topological constructions. See \cite{RoeCoarseCohomology,Roe,Dranishnikov,Geoghegan,KapovichKleiner} for representative accounts. For hyperbolic groups and spaces, the boundary at infinity provides further invariants, including topological type, quasi-M\"obius or quasisymmetric structure, and conformal dimension; see, for example, \cite{GromovHyperbolic,GhysdeLaHarpe,KapovichBenakli,BuyaloSchroeder}. Asymptotic cones provide another source of large-scale information.

Several landmark results illustrate this power. Gromov proved that every finitely generated group of polynomial growth is virtually nilpotent \cite{GromovPolynomial}. Van den Dries and Wilkie later recast the argument in nonstandard terms \cite{vanDenDriesWilkie}, helping establish the modern ultrafilter formalism for asymptotic cones. Rigidity theorems of Schwartz for rank-one lattices and of Kleiner--Leeb for higher-rank symmetric spaces and Euclidean buildings show that sufficiently rich large-scale data can recover specific algebraic and geometric structure \cite{SchwartzRankOne,KleinerLeeb}.

Subgroup distortion is another source of nonlinear metric behavior; see \cite{GromovAsymptotic,Osin}. If $H$ is a finitely generated subgroup of a finitely generated group $G$, one compares intrinsic word length in $H$ with ambient word length in $G$. A standard example is the central cyclic subgroup in the discrete Heisenberg group
\[
        H_3(\Z)=\langle a,b,c\mid [a,b]=c,\ [a,c]=[b,c]=1\rangle.
\]
The identity $[a^n,b^n]=c^{n^2}$ and the decomposition $|N|=q^2+r$ where $0\le r<2q+1$ give $|c^N|_{H_3(\Z)}\lesssim \sqrt{|N|}$; the reverse inequality follows from the normal form or central-coordinate estimate. Hence the ambient length of $c^N$ is comparable to $\sqrt{|N|}$, while its intrinsic length in $\langle c\rangle$ is $|N|$. Yet distortion of a specified subgroup is not an invariant of the ambient group alone: a quasi-isometry need not respect multiplication and generally sends a subgroup neither to a subgroup nor to a set lying a bounded distance from one.

Sequence distortion retains the metric profile while discarding the algebraic requirement. Let $I$ be either $\N$ or $\Z$; we call these the semi-infinite and bi-infinite index sets, respectively. Given a \emph{rate function} $f:\N\to(0,\infty)$ (see \cref{d:rate-function}), extended to $\Z$ by setting $f(t)=0$ for $t\le0$, we ask whether $X$ contains a sequence $(p_n)_{n\in I}$ for which there exist constants $A,B>0$ and an integer $N_0\ge1$ such that
\begin{equation}\label{e:intro-rate}
        A f(|n-m|)\le d_X(p_n,p_m)\le B f(|n-m|)
        \qquad\text{whenever }|n-m|\ge N_0.
\end{equation}
We call such a sequence \emph{$f$-distorted}; see \cref{s:prelim-rates} for the precise definition and growth-type convention.  In \cref{p:distorted-normalizations} we obtain several equivalent characterizations of this notion, including formulations that allow additive constants and multiplicative rescaling of the argument. In particular, for a rate function $f$ and a sequence $(p_n)_n$, condition \eqref{e:intro-rate} is equivalent to the existence of an integer $C\ge 1$ such that for all $m,n\in I$ we have
\[
      \frac{1}{C}f(\lfloor \frac{1}{C}|n-m|-C\rfloor)\le d_X(p_n,p_m)\le C f(C|n-m|+C)+C.
\]

Unlike distortion or compression of an entire map between metric spaces, sequence distortion fixes the ambient space and asks only for an indexed sequence with the prescribed large-scale translation-invariant distance profile. The resulting \emph{sequence distortion spectrum}, defined in \cref{d:sequence-spectrum}, is a quasi-isometry invariant by \cref{p:spectrum-qi}.

The Heisenberg sequence $(c^n)$ realizes the square-root rate. Every realizable function is quasi-subadditive and at most linear; see \cref{l:rate-constraints}. Sequence distortion therefore records one-dimensional large-scale patterns within the natural range from bounded to linear growth. The diagnostic examples give two pairs: the first has equivalent basepoint packing functions and the second equivalent uniform packing functions; both pairs have equal asymptotic dimensions and filling-function growth classes, and isometric asymptotic cones at the chosen wedge points for every common scaling sequence and ultrafilter. Sequence distortion nevertheless separates each pair.

Two themes drive the proofs. A power rate $N^\alpha$ asks for a large-scale discretization of the snowflaked interval $([0,1],|s-t|^\alpha)$, so Euclidean quasiarc theory controls the critical exponent. A power-distorted sequence also produces a snowflaked interval in an asymptotic cone, while asymptotic cones of geodesic hyperbolic spaces are $\R$-trees. We recall the external inputs in \cref{s:prelim-background}.

The Euclidean classification is Theorem~\ref{mt:euclidean}, proved in \cref{t:euclidean}.

\begin{thm}[Euclidean spaces]\label{mt:euclidean}
Let $I\in\{\N,\Z\}$ and let $0<\alpha\le 1$.
\begin{enumerate}
\item The space $\R$ contains an $\alpha$-power-distorted sequence indexed by $I$ if and only if $\alpha=1$.
\item If $k\ge 2$, then $\R^k$ contains an $\alpha$-power-distorted sequence indexed by $I$ if and only if
\[
        \alpha>\frac1k.
\]
\end{enumerate}
\end{thm}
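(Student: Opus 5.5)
We prove the two halves, existence and nonexistence, separately. Only the forward direction needs real work in part (1) and in the necessity half of part (2).

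\emph{Realizability in $\R^k$ for $1/k<\alpha\le1$.} For $\alpha=1$ we take $p_n=n e_1$ in any $\R^k$. For $1/k<\alpha<1$ and $k\ge2$, we use the classical fact, recalled in \cref{s:prelim-background}, that the snowflake $([0,1],|s-t|^\alpha)$ admits a bi-Lipschitz embedding $\gamma$ into $\R^k$ exactly when $\alpha>1/k$. For $k=2$ this is the von Koch-type construction; in general it follows from Assouad-type results or explicit quasiarcs of dimension $1/\alpha<k$. We need this embedding on all of $[0,\infty)$, or on $\R$, and self-similarly. So we take a self-similar quasiarc $\Gamma$ with similarity ratio $\lambda$ and $m$ pieces, where $m^{\alpha}=\lambda^{-1}$ up to the usual construction. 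Its blow-ups $\lambda^{-j}\Gamma$ are nested and exhaust an unbounded quasi-line $\gamma:[0,\infty)\to\R^k$ with
\[
|\gamma(s)-\gamma(t)|\asymp|s-t|^\alpha .
\]
Two-sided extension handles $I=\Z$. We then set $p_n=\gamma(n)$. Property \eqref{e:intro-rate} is immediate, and \cref{p:distorted-normalizations} absorbs the constants. The main care is uniformity of the bi-Lipschitz constants across scales, which self-similarity provides.

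\emph{Necessity in $\R^k$: $\alpha\le 1/k$ is impossible.} This is a volume/packing count. Suppose $(p_n)$ is $\alpha$-distorted with constants $A,B,N_0$. Consider indices $n\in\{0,N_0,2N_0,\dots\}$ up to $N$, which gives about $N/N_0$ points. These points are pairwise at distance at least $A N_0^\alpha$, since for distinct such indices $|n-m|\ge N_0$. They all lie within distance $B N^\alpha$ of $p_0$. A more refined count works at every scale. Fix $r$ and take indices spaced by $s=r$. Points with index gap at least $s$ are at least $As^\alpha$ apart, and all lie in a ball of radius $BN^\alpha$. Balls of radius $As^\alpha/2$ around them are disjoint, so
\[
\frac{N}{s}\lesssim \left(\frac{N^\alpha}{s^\alpha}\right)^k=(N/s)^{\alpha k}.
\]
Letting $N/s\to\infty$ forces $\alpha k\ge1$. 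This rules out $\alpha<1/k$.

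\emph{The critical case $\alpha=1/k$ is the main obstacle.} Here the counting is sharp, so we must use that the points are spread evenly, which is a Lebesgue-measure/porosity argument. We pass to an asymptotic cone, or equivalently take rescaled limits $x\mapsto p_{\lfloor tN\rfloor}/N^{\alpha}$ via Arzelà–Ascoli on the coarse Hölder maps. This yields a bi-Lipschitz embedding of the snowflake $([0,1],|s-t|^{1/k})$ into $\R^k$. Its image would have Hausdorff $k$-measure comparable to $1$, since the image is Ahlfors $k$-regular, so it has positive Lebesgue measure. A bi-Lipschitz image of an Ahlfors $k$-regular set of positive measure in $\R^k$ contains density points. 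Blowing up at such a density point yields a bi-Lipschitz map from a snowflaked interval onto a set of full density, hence essentially onto an open set of $\R^k$. This is impossible: such a set would be a topological arc, and arcs have empty interior in $\R^k$ for $k\ge2$. Alternatively, we cite the known theorem that $([0,1],|\cdot|^{1/k})$ does not bi-Lipschitz embed into $\R^k$. For $k=1$, part (1), the same argument shows $\alpha\ge1$, so $\alpha=1$. For $k=1$ this can also be seen directly: a bi-Lipschitz image of a snowflake arc in $\R$ is an interval, so by monotonicity $\alpha=1$.
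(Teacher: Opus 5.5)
Your proposal is correct. Its outer structure is the paper's: packing (\cref{l:poly-pack}) excludes $\alpha<1/k$ and forces $\alpha=1$ in $\R$; a rescaling limit (\cref{l:compactness}) turns a critical sequence into a bi-Lipschitz copy $F$ of $([0,1],|s-t|^{1/k})$ in $\R^k$; and self-similar snowflake curves give existence.

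The genuine difference is how you rule out the endpoint $\alpha=1/k$. The paper argues in three steps:
\begin{enumerate}
\item $F([0,1])$ has bounded turning;
\item by Tukia--V\"ais\"al\"a it is therefore a quasiarc;
\item by V\"ais\"al\"a's theorem a quasiarc has Hausdorff dimension $<k$ (\cref{l:euclidean-quasiarc}(b)), contradicting $\dim_H=k$.
\end{enumerate}
You instead use that the critical exponent is exactly where the image has positive $\mathcal H^k$-measure, hence positive Lebesgue measure, and you blow up at a density point. Your route needs only Lebesgue density, Arzel\`a--Ascoli, and the fact that $\R$ is not homeomorphic to $\R^k$. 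The paper's route imports quasiconformal machinery, but it yields the stronger statement that every Euclidean quasiarc has dimension $<k$, which is the form the paper proposes to generalize in \cref{p:open-critical-endpoints}.

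Your last step should be stated precisely rather than as ``onto a set of full density, hence essentially onto an open set''. Take $x_0=F(t_0)$ with $0<t_0<1$. The maps $u\mapsto r^{-1}(F(t_0+r^ku)-x_0)$ satisfy the same two-sided snowflake bounds, so they subconverge locally uniformly to some $G:\R\to\R^k$ with $A|u-v|^{1/k}\le|G(u)-G(v)|\le B|u-v|^{1/k}$.
\begin{itemize}
\item The lower bound makes $G$ proper, so $G(\R)$ is closed.
\item For each $y$ and $\varepsilon>0$, every ball $B(x_0+ry,r\varepsilon)$ meets $F([0,1])$ for small $r$, at rescaled parameters $|u|\le((|y|+\varepsilon)/A)^k$. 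Passing to the limit shows $G(\R)$ is dense.
\end{itemize}
Hence $G$ is a homeomorphism $\R\to\R^k$, which is impossible for $k\ge2$.

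For existence, the paper gets a bi-infinite snowflake line in one step by inverting a closed self-similar snowflake curve at one of its points. Your nested blow-ups give only a ray, unless you nest about an interior fixed point of one of the similarities while tracking its rotational part. So ``two-sided extension'' needs an argument. The blow-up you already use at the critical exponent supplies it cleanly: blowing up any snowflake arc $([0,1],|s-t|^\alpha)\to\R^k$ at an interior point gives a map $\R\to\R^k$ with the same two-sided bounds, with no bookkeeping across scales.

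Finally, the maps $t\mapsto N^{-\alpha}(p_{1+\lfloor Nt\rfloor}-p_1)$ are discontinuous, so Arzel\`a--Ascoli does not apply verbatim. Replace it by a diagonal extraction on a countable dense set, or by the ultralimit in \cref{l:compactness}.
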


\noindent

For $N\ge 0$, put
\[
        \ell(N)=\log(1+N).
\]
The hyperbolic and tree results are collected in Theorem~\ref{mt:hyperbolic-tree} and proved in \cref{t:delta-power,p:hyperbolic-boundary-linear,c:hyp-power,c:log-sharp-hyp,t:simplicial-trees,c:tree-log}.

\begin{thm}[Hyperbolic spaces, the hyperbolic plane, and trees]\label{mt:hyperbolic-tree}
\leavevmode
\begin{enumerate}
\item Let $X$ be a geodesic $\delta$-hyperbolic metric space, let $I\in\{\N,\Z\}$, and let $0<\alpha\le 1$. If $X$ contains an $\alpha$-power-distorted sequence indexed by $I$, then $\alpha=1$.

\item Let $X$ be a geodesic $\delta$-hyperbolic metric space, and let $\partial X$ denote its sequential Gromov boundary.
\begin{enumerate}[label=\textup{(\alph*)}]
\item The space $X$ contains a $1$-power-distorted semi-infinite sequence if and only if $\partial X\ne\varnothing$.
\item The space $X$ contains a $1$-power-distorted bi-infinite sequence if and only if $|\partial X|\ge 2$.
\end{enumerate}

\item For each $I\in\{\N,\Z\}$, the sequence $p_n=(n,1)$, $n\in I$, in the upper half-plane model of $\Hyp$ satisfies
\[
        d_{\Hyp}(p_n,p_m)\asymp \ell(|n-m|)
        \qquad(m,n\in I,\ m\ne n).
\]

\item Let $I\in\{\N,\Z\}$, and let $f:\N\to(0,\infty)$ be nondecreasing and satisfy
\[
        f(N)=o(\ell(N)).
\]
Then $\Hyp$ contains no $f$-distorted sequence indexed by $I$.

\item Let $T$ be any simplicial tree with unit-length edges, let $I\in\{\N,\Z\}$, and let $f:\N\to(0,\infty)$ be nondecreasing and unbounded. If $(p_n)_{n\in I}$ is $f$-distorted in $T$, then there exist constants $c,L>0$ and $C\ge 0$ such that
\[
        c|n-m|-C\le d_T(p_n,p_m)\le L|n-m|
        \qquad(m,n\in I).
\]
Consequently,
\[
        f(N)\asymp N.
\]
\end{enumerate}
\end{thm}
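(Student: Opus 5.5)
The theorem has five parts, and I would prove each one with its own argument. I use the normalized form of $f$-distortion from \cref{p:distorted-normalizations}, with a single integer constant $C$.

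\emph{Part (1).} Suppose $(p_n)$ is $N^\alpha$-distorted with $\alpha<1$. Take a scaling sequence $\lambda_j\to\infty$ and form the asymptotic cone $\mathrm{Cone}_\omega(X,p_0,\lambda_j^{-\alpha})$. Define a map on $[0,1]$ by sending $t$ to the point represented by $(p_{\lfloor t\lambda_j\rfloor})_j$. The distortion bounds give $c|s-t|^\alpha\le d(\gamma(s),\gamma(t))\le C|s-t|^\alpha$. For $s\ne t$ the additive and floor errors are $o(\lambda_j^\alpha)$, so they vanish in the limit. Hence $[0,1]$ with the snowflake metric $|s-t|^\alpha$ bi-Lipschitz embeds in the cone, and the cone is an $\R$-tree. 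But in an $\R$-tree the unique arc between $\gamma(0)$ and $\gamma(1)$ must be exactly $\gamma([0,1])$, and arcs in $\R$-trees are isometric to intervals, so they have finite $1$-dimensional Hausdorff measure. An $\alpha$-snowflaked interval with $\alpha<1$ has Hausdorff dimension $1/\alpha>1$, so it cannot be bi-Lipschitz to an interval. This is a contradiction. Alternatively, split $[0,1]$ into $k$ pieces: the tree arc length is the sum of consecutive distances, which is at least $c\,k^{1-\alpha}\to\infty$, while it equals $d(\gamma(0),\gamma(1))\le C$. That finishes part (1).

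\emph{Part (2).} A $1$-distorted sequence is a coarse quasi-geodesic, hence a quasi-geodesic ray (or line) after interpolating by geodesic segments. It therefore converges to one boundary point, or to two distinct boundary points in the bi-infinite case. Conversely, given a boundary point, I build a quasi-geodesic ray from the basepoint in the standard way, using the $\delta$-hyperbolic existence of $(1,20\delta)$-quasi-geodesic rays to sequential boundary points, and sample it at integer times. For two distinct boundary points, I similarly take a quasi-geodesic line joining them. This line exists in a geodesic hyperbolic space that need not be proper, provided one uses quasi-geodesics with additive constants.

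\emph{Part (3).} This is a direct computation. We have $\cosh d=1+|n-m|^2/2$, so $d=2\arsinh(|n-m|/2)\asymp\log(1+|n-m|)$ for $n\ne m$.

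\emph{Part (4).} I use volume packing. The distortion condition gives $d(p_n,p_m)\ge \frac1C f(\lfloor |n-m|/C-C\rfloor)$. Let $N$ be large and take indices $n$ spaced $S=C(C+K)$ apart, where $K$ is chosen with $f(K)\ge 2C$. Here $f$ is nondecreasing and unbounded, since a bounded $f$ would force infinitely many points in a bounded set to be uniformly separated, which is impossible in a proper space. With this spacing the chosen points are pairwise $2$-separated, and there are about $N/S$ of them. They all lie in the ball of radius $Cf(CN+C)+C$. The hyperbolic area of such a ball is at most $e^{Cf(CN+C)+C}$, so disjoint unit balls give $N/S\lesssim e^{Cf(CN+C)}$, that is, $f(CN+C)\gtrsim \log N$. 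This contradicts $f=o(\ell)$.

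\emph{Part (5), trees.} This is the main obstacle, because $T$ may be locally infinite, so packing fails. I first establish the upper bound: since $f(N)=O(N)$, the sequence is coarsely Lipschitz. For the lower bound, I use the fact that $f$ is unbounded to pick $K$ with $f$ large on the arguments $\ge K$. Then the subsequence $q_j=p_{jK}$ has consecutive distances at most $D$, while $d(q_i,q_j)\ge R$ whenever $|i-j|\ge 2$, with $R\gg D$ for suitable $K$. (I would first try to get a genuinely linear bound directly from a tree lemma.) The lemma is that in a tree, a chain whose steps are at most $D$ and whose points two steps apart are more than $2D$ apart progresses along a geodesic. Concretely, consider the path made of the geodesics $[q_j,q_{j+1}]$. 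The separation condition forbids backtracking past the midpoints, so the projections of the $q_j$ onto $[q_0,q_n]$ are monotone, each within distance $D$ of the previous, and each advances by at least $R-2D$ per two steps. This gives $d(q_0,q_n)\ge c n-C$. Passing back to $p$ costs a bounded error, which yields the claimed linear bounds and hence $f(N)\asymp N$. Making this monotonicity argument rigorous via Gromov products in $T$ is where the care is needed.
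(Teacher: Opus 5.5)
Parts (1)--(4) are fine and essentially follow the paper. Part (1) is the cone argument with the partition estimate. Part (2) uses the Kapovich--Benakli quasi-geodesic description of the sequential boundary. Part (3) is the horocycle formula. Part (4) is exponential packing in $\Hyp$, which you carry out by hand rather than through \cref{l:exp-pack}; your separate treatment of bounded $f$ is correct. Part (5), however, has a genuine gap in the lower bound.

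\emph{Why the tree lemma cannot be used.} Your subsampled chain $q_j=p_{jK}$ can never satisfy the hypothesis of your tree lemma. If every step satisfies $d(q_j,q_{j+1})\le D$, the triangle inequality gives $d(q_i,q_{i+2})\le 2D$. So ``points two steps apart are more than $2D$ apart'' never holds, and the claimed advance of $R-2D$ per two steps is $\le 0$. Changing $K$ does not help, because the step length itself grows like $f(K)$. For a rate such as $\ell$, one has $f(mK)/f(K)\to1$ as $K\to\infty$ for each fixed $m$. The separation can dominate the step length only after many steps, so the chain may backtrack by a bounded amount. The projections onto $[q_0,q_n]$ then need not be monotone, and the monotone-projection picture is unavailable.

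\emph{What is missing.} Keep the step bound fixed and let the index gap be large. Take the bounded-increment constant $L$ of \cref{l:bounded-increments} for the original sequence, and choose $M_0\ge N_0$ with $Af(M_0)>2L$.

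\emph{Step 1: bounded backtracking.} One gets only $d(p_j,[p_i,p_k])\le D:=L(M_0+2)$ for $i<j<k$, proved by a cut-point argument. Let $h=d(p_j,[p_i,p_k])$. The point $y$ at distance $h/2$ from $p_j$ on the perpendicular separates $p_j$ from both $p_i$ and $p_k$. Hence the chain of segments $[p_r,p_{r+1}]$ passes through $y$ at some $r<j$ and at some $s\ge j$. Then $d(p_r,p_s)\le 2L$ forces $s-r<M_0$, while $L(s-r)\ge h-2L$.

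\emph{Step 2: counting.} Linear progress must then come from counting rather than monotonicity. Choose $M_1$ with $Af(M_1)>2D+2$. Each ball of radius $D+1$ about one of at most $d(p_i,p_k)+2$ unit-spaced points of $[p_i,p_k]$ contains at most $M_1$ of the points $p_i,\dots,p_k$. Therefore $k-i+1\le M_1(d(p_i,p_k)+2)$. This counts indices rather than points, which is why it survives infinite valence.

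Your upper bound and the final deduction $f(N)\asymp N$ from the two-sided linear estimate are fine once this lower bound is in place.
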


\noindent

The packing statement in Theorem~\ref{mt:logarithmic} is \cref{l:exp-pack}; its group-theoretic consequence and CAT$(-1)$ example are \cref{c:fg-no-sublog} and \cref{ex:loglog}.
\begin{thm}[Logarithmic and sublogarithmic rates]\label{mt:logarithmic}
\leavevmode
\begin{enumerate}
\item Let $X$ satisfy an exponential packing bound, let $I\in\{\N,\Z\}$, and let $f:\N\to(0,\infty)$ be nondecreasing with $f(N)=o(\ell(N))$. Then $X$ contains no $f$-distorted sequence indexed by $I$.
\item Let $G$ be a finitely generated group equipped with a word metric, let $I\in\{\N,\Z\}$, and let $f:\N\to(0,\infty)$ be nondecreasing with $f(N)=o(\ell(N))$. Then $G$ contains no $f$-distorted sequence indexed by $I$.
\item There exists a proper geodesic CAT$(-1)$ surface $W$ such that, for each $I\in\{\N,\Z\}$, $W$ contains a sequence $(p_n)_{n\in I}$ satisfying
\[
        d_W(p_n,p_m)\asymp \log\log(e^e+|n-m|)
        \qquad(m,n\in I,\ m\ne n).
\]
\end{enumerate}
\end{thm}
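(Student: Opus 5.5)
Part (1) is a counting argument, parts (2) follows from (1), and part (3) is a construction. The exponential packing bound is not yet formally stated; I take it to say there are constants $K,\lambda>0$ such that every ball of radius $r$ in $X$ contains at most $Ke^{\lambda r}$ points that are pairwise at distance at least $1$.

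\textbf{Part (1).} Suppose $(p_n)_{n\in I}$ is $f$-distorted with $f(N)=o(\ell(N))$. I will use the normalization of \cref{p:distorted-normalizations} that gives constants $A,B>0$ and $N_0$ with
\[
A f(|n-m|)\le d(p_n,p_m)\le Bf(|n-m|)\qquad\text{for }|n-m|\ge N_0.
\]
Since $f$ is positive and nondecreasing, $A f(N)\ge Af(1)=:\eta>0$. After rescaling the metric, or replacing the separation constant $1$ by $\eta$ in the packing bound (which changes $K$ and $\lambda$ only by constants), the points $p_{jN_0}$ are pairwise $\eta$-separated.

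For large $M$, consider the indices $0,N_0,2N_0,\dots$ up to $M$. There are about $M/N_0$ of them. Each corresponding point lies within $Bf(M)$ of $p_0$, since $f$ is nondecreasing. The packing bound therefore gives
\[
M/N_0\lesssim K e^{\lambda' B f(M)}.
\]
Taking logarithms yields $\log M\le \lambda' B f(M)+O(1)$, which contradicts $f(M)=o(\log M)$.

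The only care needed is that the packing bound is stated at a fixed separation scale. Separation at a different scale costs only constants, via a covering argument or the quasi-isometry invariance from \cref{p:spectrum-qi}.

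\textbf{Part (2).} A finitely generated group with a word metric has exponential growth bound $|B(r)|\le (2s+1)^r$, where $s$ is the number of generators. Points of $G$ are automatically $1$-separated, so $G$ satisfies an exponential packing bound and part (1) applies directly.

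\textbf{Part (3).} This is the main obstacle. The goal is to make log-log behaviour possible by letting the surface become thinner and thinner, which defeats the packing bound. I would build $W$ as a surface of revolution, or a hyperbolic surface with cusps glued along a ray, with curvature $\le -1$.

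The template is the log rate from Theorem~\ref{mt:hyperbolic-tree}(3): the horocyclic points $(n,1)$ in $\Hyp$ satisfy $d\asymp \log|n-m|$. The idea is to iterate this, imitating a "hyperbolic plane over a hyperbolic plane". A sequence spaced along a horocycle that itself has exponentially shrinking injectivity radius, together with a second layer of exponential compression, should give $\log\log$.

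Concretely, I would take a warped product metric $dr^2+\phi(r)^2d\theta^2$ on a half-cylinder or plane. I choose $\phi$ so that the curvature $-\phi''/\phi\le -1$, for example $\phi(r)=\exp(e^{r})$, whose curvature is $-(e^{2r}+e^r)\le -1$. Along the boundary circle, or along a line, I place points $p_n$ at angular/arc separation $n$. The distance between two points at arc separation $N$ should be about $2\rho(N)$, where $\phi(\rho)\approx N$, that is $\rho\approx\log\log N$. This matches the $\log\log(e^e+N)$ rate, with the upper bound given by explicit paths and the lower bound by projecting to the $r$-coordinate.

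To make the surface complete and proper with a $\Z$-indexed sequence, I would use the coordinates $(x,r)\in\R\times[0,\infty)$ with metric $dr^2+\phi(r)^2dx^2$ and glue in a hyperbolic region below. Alternatively I would double it, checking that the CAT$(-1)$ condition survives the gluing along a geodesic boundary, using Reshetnyak gluing.

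The key estimates are then two. The upper bound follows by going up to height $\rho$, across, and down. For the lower bound, any path between the two points must either reach height about $\rho$ or travel horizontally a distance of at least $N\phi(r)/\phi(0)$ at lower height; this comes from a comparison with the first variation of arc length.
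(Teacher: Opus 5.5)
\textbf{Parts (1) and (2).} These follow the paper's argument (\cref{l:exp-pack} and \cref{c:fg-no-sublog}). An arithmetic subsequence of step $N_0$ is uniformly separated by the lower bound and lies in a ball of radius $Bf(M)$ by the upper bound, so the packing bound forces $\log M\lesssim f(M)$.

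One correction. The paper's exponential packing bound \eqref{e:exp-pack} is assumed at \emph{every} scale $\varepsilon>0$, and this matters because $f$ may be bounded. Your remark that passing from separation $1$ to separation $\eta$ ``costs only constants'' is false in general. An infinite star with edges of length $1/3$ has $\Pack_1$ of every ball equal to $1$, yet its leaves form an $f$-distorted sequence with $f\equiv1$. With the paper's definition you simply take $\varepsilon=Af(N_0)$ and the step disappears.

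\textbf{Part (3) has a genuine gap.} Your strategy matches the paper's: a warped product, an up--across--down upper bound, and a maximal-height lower bound. The explicit metric, however, points the wrong way.

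For $dr^2+\phi(r)^2dx^2$ with $\phi(r)=\exp(e^r)\ge1$, every path has length at least $\int\phi(r)\,|dx|\ge|\Delta x|$. So points on $\{r=0\}$ at arc separation $N$ are at distance at least $N/e$. The sequence is \emph{linearly} distorted, and moving up makes horizontal travel more expensive, not cheaper.

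Your heuristic ``$\phi(\rho)\approx N$'' describes a horizontal cost $N/\phi(\rho)$, i.e.\ the warping function $1/\phi=\exp(-e^r)$. But its curvature is $-(1/\phi)''/(1/\phi)=e^r-e^{2r}$. This is positive for $r<0$ and lies in $(-1,0]$ for $0\le r<\log\frac{1+\sqrt5}{2}$, so the CAT$(-1)$ verification is lost.

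The missing idea is an exponent $\Phi$ that grows like $e^t$ while satisfying $\Phi'^2-\Phi''\ge1$ everywhere. The paper's choice $\Phi(t)=t+e^t$ in $dt^2+e^{-2\Phi(t)}dx^2$ gives $K=-(1+e^t+e^{2t})\le-1$ on all of $\R^2$ (\cref{ex:loglog}). No boundary, gluing or doubling is then needed.

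Your doubling fallback would fail anyway. The level curves $r=\mathrm{const}$ have nonzero geodesic curvature, so Reshetnyak gluing does not apply. Doubling the convex (horoball-like) side produces two distinct geodesics between seam points, one chord in each copy, so the double is not CAT$(\kappa)$ for any $\kappa$.

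Finally, the lower bound needs no first-variation argument. Along any path $|dx|\le e^{\Phi(t)}\,ds$. A path of length $L$ from height $0$ to height $0$ stays below height $L/2$, so $N\le Le^{\Phi(L/2)}$, and hence $L\ge2\log\log N-C$.
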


\noindent

The following diagnostic examples are stated in Theorem~\ref{mt:diagnostic} and proved in \cref{s:qi-diagnostic}: part~(1) follows from \cref{p:seq-distinguishes,p:standard-invariants-agree}, and part~(2) from \cref{p:bounded-log-distinguishes,p:bounded-standard-invariants}.
\begin{thm}[Diagnostic examples]\label{mt:diagnostic}
Put
\[
        f_0(0)=0,
        \qquad
        f_0(N)=\log\log(e^e+N)\quad(N\ge 1).
\]
\begin{enumerate}
\item There exist $\delta\ge 0$, proper geodesic simply connected $\delta$-hyperbolic metric spaces $X,Y$, and basepoints $o_X\in X$, $o_Y\in Y$ such that the following statements hold.
\begin{enumerate}[label=\textup{(\alph*)}]
\item Neither $X$ nor $Y$ has bounded geometry.
\item The space $Y$ contains $f_0$-distorted sequences indexed by $\N$ and by $\Z$, whereas $X$ contains no $f_0$-distorted sequence indexed by $\N$.
\item If
\[
        V_X(R)=\Pack_1(B_X(o_X,R)),
        \qquad
        V_Y(R)=\Pack_1(B_Y(o_Y,R)),
\]
then
\[
        V_X(R)\asymp V_Y(R).
\]
\item One has
\[
        \asdim X=\asdim Y=2.
\]
\item If $\delta_X$ and $\delta_Y$ denote the Lipschitz filling functions, then
\[
        \delta_X\filleq L,
        \qquad
        \delta_Y\filleq L.
\]
\item For every scaling sequence $(r_i)$ with $r_i\to\infty$ and every non-principal ultrafilter $\omega$,
\[
        \operatorname{Cone}_\omega(X,o_X,(r_i))
        \cong
        \operatorname{Cone}_\omega(Y,o_Y,(r_i)).
\]
\end{enumerate}
Consequently, $X$ and $Y$ are not quasi-isometric.

\item Let $D\ge 5$ be an integer, let $T_D$ be the $D$-regular simplicial tree with unit-length edges, and define
\[
        X_b=T_D\vee\R^2,
        \qquad
        Y_b=T_D\vee\R^2\vee\Hyp,
\]
using chosen wedge points and the induced path metrics. Then $X_b$ and $Y_b$ are proper geodesic simply connected metric spaces of bounded geometry, and the following statements hold.
\begin{enumerate}[label=\textup{(\alph*)}]
\item Neither $X_b$ nor $Y_b$ is Gromov-hyperbolic.
\item The space $Y_b$ contains $\ell$-distorted sequences indexed by $\N$ and by $\Z$, whereas $X_b$ contains no $\ell$-distorted sequence indexed by $\N$.
\item If
\[
        V_Z^{\mathrm{unif}}(R)
        =\sup_{z\in Z}\Pack_1(B_Z(z,R))
        \qquad(Z\in\{X_b,Y_b\}),
\]
then
\[
        V_{X_b}^{\mathrm{unif}}(R)
        \asymp (D-1)^R
        \asymp V_{Y_b}^{\mathrm{unif}}(R).
\]
\item One has
\[
        \asdim X_b=\asdim Y_b=2.
\]
\item If $\delta_{X_b}$ and $\delta_{Y_b}$ denote the Lipschitz filling functions, then
\[
        \delta_{X_b}\filleq L^2,
        \qquad
        \delta_{Y_b}\filleq L^2.
\]
\item Let $o_{X_b}\in X_b$ and $o_{Y_b}\in Y_b$ denote the respective wedge points. For every scaling sequence $(r_i)$ with $r_i\to\infty$ and every non-principal ultrafilter $\omega$,
\[
        \operatorname{Cone}_\omega(X_b,o_{X_b},(r_i))
        \cong
        \operatorname{Cone}_\omega(Y_b,o_{Y_b},(r_i)).
\]
\end{enumerate}
Consequently, $X_b$ and $Y_b$ are not quasi-isometric.
\end{enumerate}
\end{thm}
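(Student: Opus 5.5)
The plan is to prove part (2), the bounded-geometry pair, in detail; part (1) follows the same template, with $\Hyp$ replaced by the CAT$(-1)$ surface $W$ of Theorem~\ref{mt:logarithmic}(3). The distinguishing statement (b) is the heart. For $Y_b$ I will take the horocyclic sequence $p_n=(n,1)$ in the $\Hyp$ summand. The wedge is glued at a single point, so distances between points of one summand are unchanged, and Theorem~\ref{mt:hyperbolic-tree}(3) then gives the $\ell$-rate in $Y_b$ for both index sets.

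For $X_b$, suppose $(p_n)_{n\in\N}$ is $\ell$-distorted. The upper bound gives $d(p_n,p_{n+1})\le C'$, so consecutive points are uniformly close. Near the wedge point the sequence can switch between $T_D$ and $\R^2$, but only by passing through a bounded neighbourhood of $o$.

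The lower bound $d(p_n,p_m)\ge c\log|n-m|-C$ forces the points to escape. There are only finitely many points, up to $1$-separation, in each ball $B(o,R)$: at most $e^{O(R)}$ in the tree part and $O(R^2)$ in the plane part. Any index window of length $N$ has pairwise distances at least $c\log N-C$ between indices that are $N/2$ apart. So the sequence eventually leaves every ball, and it can return to $B(o,R)$ only for boundedly many indices, roughly $e^{O(R)}$. Hence, after some index, the sequence lies entirely in one summand minus a neighbourhood of $o$. I then apply the single-space results to that tail, extended by the coarse retraction of $X_b$ onto that summand, which is $1$-Lipschitz. In $T_D$, Theorem~\ref{mt:hyperbolic-tree}(5) forces $\ell(N)\asymp N$, a contradiction. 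In $\R^2$ I need that no $\ell$-distorted sequence exists. This should follow from the packing bound: $N$ consecutive points lie in a ball of radius $O(\log N)$, yet must contain roughly $N/K$ points that are pairwise $\ge 1$ apart for suitable subsampling, while $\R^2$ has only $O(\log^2 N)$ such points. This is the same mechanism as \cref{l:exp-pack}, with polynomial packing.

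The obstacle is the tree-plus-plane case, namely making the escape argument rigorous. An alternative avoids it: any ball $B(p_n,c\log N)$ in $X_b$ has packing $\le e^{O(\log N)}\cdot$const, which is polynomial in $N$ and too weak, so a global packing bound fails. The fix is to choose subsampled points $p_{n+jK}$ with $j\le N/K$, which are pairwise $\ge c\log K - C\ge1$ apart, and to locate them using the $1$-Lipschitz projections to the two summands.

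For the remaining items:

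(a) Both spaces contain isometric flats $\R^2$, so neither is hyperbolic.

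(c) The packing of $T_D$ dominates, and wedging adds only polynomial or exponential terms of at most the same base. The $\Hyp$ ball growth is $e^{R}$, and $D-1\ge4>e$ for $D\ge5$.

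(d) The asymptotic dimension of a wedge is the maximum of those of the summands, which is $2$.

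(e) Loops decompose across a cut point, and $\R^2$ gives the quadratic bound.

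(f) The cone of a wedge is the wedge of the cones. Cones of $\Hyp$ and $T_D$ are both the complete $\R$-tree with continuum branching at every point. Hence adding the cone of $\Hyp$ to the cone of $T_D$ does not change the wedge, giving the isometry.

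Finally, non-quasi-isometry follows from \cref{p:spectrum-qi} together with (b).
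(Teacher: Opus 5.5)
Your part~(2) follows the paper's route: the horocycle in the $\Hyp$ summand, confinement of a tail to one summand, then \cref{t:simplicial-trees} for $T_D$ and polynomial packing for $\R^2$, with the invariants read off factorwise. Your escape argument already proves the confinement step (\cref{l:wedge-tail}): $B(o,C')$ contains only finitely many terms, and every factor switch happens inside it. So the projection-based ``alternative'' is unnecessary. It also could not handle the $T_D$ summand by packing alone, since exponential packing is compatible with the rate $\ell$, as $\Hyp$ shows.

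The genuine gap is part~(1), which does not follow from the same template with $\Hyp$ replaced by $W$. Read literally, the template gives $X=T_D\vee\R^2$ and $Y=T_D\vee\R^2\vee W$. These spaces are not hyperbolic, their filling functions are quadratic rather than linear, and $X$ has bounded geometry, contradicting~(a). Replacing $\R^2$ by $\Hyp$ as well still fails, for a structural reason tied to~(c). Suppose $(p_n)$ is $f_0$-distorted with data $A,B,N_0$, and $K\ge N_0$ satisfies $Af_0(K)\ge1$. Then the points $p_{1+jK}$ with $0\le jK\le\exp(\exp(R/B))-e^e$ are $1$-separated and lie in $B(p_1,R)$. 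Hence any space realizing $f_0$ has basepoint packing at least $\exp(\exp(cR))$ for large $R$. So~(c) forces $V_X$ to be at least doubly exponential, even though $X$ must realize no $f_0$ rate. By \cref{l:bg-exp-pack} this is impossible when $X$ has bounded geometry, so no bounded-degree tree can serve as the common factor.

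The missing idea is a common factor with enormous basepoint packing that realizes only linear unbounded rates. The paper uses the locally finite rooted tree $T_*$ with $b_j=\lceil\exp(\exp(\exp(j+10)))\rceil$ children at level $j$. Then $\log V_{T_*}(R)\ge\exp(\exp(\lfloor R\rfloor+9))$, which dominates both $\log V_W(R)\lesssim e^R$ and $\log V_{\Hyp}(R)\lesssim R$. The pair is $X=T_*\vee\Hyp$ and $Y=T_*\vee\Hyp\vee W$, and the items are checked as follows.
\begin{enumerate}
\item An $f_0$-distorted tail in $X$ would lie either in $T_*$, excluded by \cref{c:tree-log}, or in $\Hyp$, excluded by exponential packing.
\item The wedges are CAT$(-1)$, so filling is linear; the lower bound comes from the $1$-Lipschitz retraction onto $\Hyp$.
\item The bound $\asdim W\le2$ needs J{\o}rgensen--Lang.
\item At the wedge point the cones are $\mathcal T\vee\mathcal U$ and $\mathcal T\vee\mathcal U\vee\mathcal U$, where $\mathcal U$ is the homogeneous $\R$-tree of valence $2^{\aleph_0}$. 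This needs Dyubina--Polterovich for $W$ (curvature $\le-1$) as well as for $\Hyp$, together with $\mathcal U\vee\mathcal U\cong\mathcal U$. The cone $\mathcal T$ of $T_*$ never needs to be identified, since it occurs in both.
\end{enumerate}
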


\noindent
Part~(1) compares basepoint packing because neither space has bounded geometry; it does not compare finite uniform packing functions. Part~(2) compares uniform packing for bounded-geometry spaces. Both parts give exact equalities or growth equivalences for the listed invariants and compare cones from the displayed basepoints for every common scaling sequence and non-principal ultrafilter. They make no claim about cones with arbitrary moving basepoints. Non-quasi-isometry follows from the different realization behavior of $f_0$ in part~(1) and $\ell$ in part~(2), together with \cref{l:qi-invariance}.

\noindent

\medskip
\noindent\textbf{Organization.}
\Cref{s:prelim} fixes terminology and collects the packing and compactness tools used repeatedly. The model-space classifications are proved in \cref{s:euclidean,s:hyperbolic,s:simplicial-trees}. The diagnostic examples appear in \cref{s:qi-diagnostic}. \Cref{s:summary} gives comparison tables, and \cref{s:open-problems} records open problems and further questions.

\section{Preliminaries and definitions}\label{s:prelim}

This section fixes the terminology, conventions, and hypotheses used in the paper.

\subsection{Quasi-isometries, rate functions, and sequence distortion}\label{s:prelim-rates}

We use the convention $\N=\{1,2,3,\ldots\}$.

\begin{defn}[Quasi-isometric embeddings and quasi-isometries]\label{d:coarse-maps}
Let $X,Y$ be metric spaces and $F:X\to Y$ be a map.
\begin{enumerate}
\item For $L\ge 1$, $C\ge 0$, the map $F:X\to Y$ is an $(L,C)$-\emph{quasi-isometric embedding} if 
\[
        \frac{1}{L}d_X(x,x')-C\le d_Y(Fx,Fx')\le Ld_X(x,x')+C
\]
for all $x,x'\in X$.  The map $F$ is a \emph{quasi-isometric embedding} if $F$ is an $(L,C)$-quasi-isometric embedding for some $L\ge 1, C\ge 0$.
\item The map $F:X\to Y$ is a \emph{quasi-isometry} if $F$ is a quasi-isometric embedding and, in addition, there exists a constant $C'\ge 0$ such that for every $y\in Y$ there is $x\in X$ such that $d_Y(y, F(x))\le C'$. 
\end{enumerate}
\end{defn}

\begin{defn}[Comparison, dominance, and equivalence of functions]\label{d:function-comparison}

\begin{enumerate}[label=\textup{(\roman*)},leftmargin=*]
\item Let $\mathcal P\ne\varnothing$, and let $U,V\colon\mathcal P\to[0,\infty)$.  
\begin{itemize}
\item[(a)] We write
\[
        U\lesssim_{\mathcal P}V
\]
if there is a constant $C>0$ such that $U(p)\le C V(p)$ for every $p\in\mathcal P$.  
\item[(b)] We write
\[
        U\asymp_{\mathcal P}V
\]
if both $U\lesssim_{\mathcal P}V$ and $V\lesssim_{\mathcal P}U$; equivalently, there are constants $0<c\le C<\infty$ such that
\[
        cV(p)\le U(p)\le CV(p)
        \qquad(p\in\mathcal P).
\]
\item[(c)] When the parameter set has been explicitly specified by the quantifiers in the containing statement, we suppress the subscript $\mathcal P$ and write $U\lesssim V$ or $U\asymp V$.  The implicit constants are then independent of every variable ranging over $\mathcal P$, although they may depend on fixed ambient data and on parameters declared fixed in the containing statement.
\end{itemize}
\item Let $D$ be either $\N$ or $[0,\infty)$, and let $f,g:D\to[0,\infty)$.  

\begin{itemize}
\item[(a)] We write $f\lesssim_\infty g$ if there are $C>0$ and $N_0\ge0$ such that
\[
        f(N)\le Cg(N)
        \qquad(N\in D,\ N\ge N_0).
\]
\item[(b)] We write $f\asymp_\infty g$ if both $f\lesssim_\infty g$ and $g\lesssim_\infty f$, and then say that $f$ and $g$ are \emph{multiplicatively equivalent at infinity}.  We usually suppress the subscript $\infty$ and write $f\asymp g$; in particular, an unqualified assertion $f(N)\asymp g(N)$ for functions on $D$ means $f\asymp_\infty g$.

\item[(c)] Let $f,g:\N\to[0,\infty)$.  We write $f\preceq g$ and say that $f$ is \emph{dominated by $g$} if there are a constant $C\ge1$, an integer $M\ge1$, and $N_0\in\N$ such that
\[
        f(N)\le Cg(MN)+C
        \qquad(N\ge N_0).
\]
We write $f\succeq g$ if $g\preceq f$.  
\item[(d)] The functions $f$ and $g$ are \emph{growth equivalent}, written $f\simeq g$, if $f\preceq g$ and $g\preceq f$.  This is an equivalence relation; its classes are the \emph{growth types}.  
\end{itemize}
\end{enumerate}
None of the comparison or equivalence relations in this definition requires monotonicity.  Whenever monotonicity is needed, it is stated separately; in particular, it is part of the definition of a rate function below.
\end{defn}

\begin{defn}[Rate functions]\label{d:rate-function}
A \emph{rate function} is a function $f:\N\to(0,\infty)$ that is monotone nondecreasing, unbounded, and \emph{scaling invariant}: for every fixed integer $M\ge1$,
\[
        f(MN)\asymp f(N),
\]
where the comparison constants may depend on $M$.
By convention, we extend every rate function to $\Z$ by setting $f(t)=0$ for $t\le0$.
\end{defn}

The functions $N^\alpha$ for $\alpha>0$, $\log(1+N)$, and $\log\log(e^e+N)$ are rate functions.  Scaling invariance means that integer dilations of the input preserve the multiplicative equivalence class of the function.

\begin{lem}[Equivalence relations for rate functions]\label{l:rate-equivalences}
Let $f,g$ be rate functions.  Then
\[
        f\simeq g
        \quad\Longleftrightarrow\quad
        f\asymp g.
\]
\end{lem}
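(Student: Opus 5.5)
We prove the two implications separately, using only the definitions of $\preceq$, $\asymp_\infty$ and the properties built into \cref{d:rate-function} (monotonicity, unboundedness, scaling invariance).

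\emph{($\Leftarrow$)} Suppose $f\asymp g$, so there are $C>0$ and $N_0$ with $f(N)\le Cg(N)$ and $g(N)\le Cf(N)$ for $N\ge N_0$. Taking $M=1$ in the definition of $\preceq$ gives $f(N)\le Cg(1\cdot N)+C$ for $N\ge N_0$, hence $f\preceq g$. By symmetry $g\preceq f$, so $f\simeq g$. This direction needs none of the rate-function hypotheses.

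\emph{($\Rightarrow$)} Suppose $f\preceq g$; we show $f\lesssim_\infty g$, and the symmetric argument gives the reverse bound. By hypothesis there are $C\ge1$, an integer $M\ge1$ and $N_0$ with $f(N)\le Cg(MN)+C$ for $N\ge N_0$. The argument has two steps.
\begin{enumerate}
\item \emph{Removing the dilation.} By scaling invariance of $g$ applied to this fixed $M$, there are $C_M$ and $N_1$ with $g(MN)\le C_Mg(N)$ for $N\ge N_1$. This gives $f(N)\le CC_Mg(N)+C$ for $N\ge\max(N_0,N_1)$.
\item \emph{Absorbing the additive constant.} Since $g$ is unbounded and nondecreasing, there is $N_2$ with $g(N)\ge 1$ for all $N\ge N_2$. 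Monotonicity is what upgrades ``unbounded'' to ``eventually at least $1$''. Then $C\le Cg(N)$ for such $N$, so
\[
f(N)\le C(C_M+1)g(N)\qquad\text{for } N\ge\max(N_0,N_1,N_2).
\]
\end{enumerate}
Thus $f\lesssim_\infty g$. Running the same argument with the roles of $f$ and $g$ exchanged, starting from $g\preceq f$, gives $g\lesssim_\infty f$, and therefore $f\asymp g$.

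The only real content is in the forward direction, and it sits in the two steps above. Step 1 uses scaling invariance to undo the dilation $N\mapsto MN$. Step 2 uses unboundedness together with monotonicity to absorb the additive constant $C$. Both are immediate from \cref{d:rate-function}, so I do not expect a genuine obstacle. The one point to watch is that the comparison constants from scaling invariance depend on $M$; this is harmless because $M$ is fixed once $f\preceq g$ is given.
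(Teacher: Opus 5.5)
Your proof is correct and follows the paper's argument: you take $M=1$ for the easy direction, and for the converse you use scaling invariance to remove the dilation and then absorb the additive constant. One cosmetic point is that you should replace $C$ by $\max\{C,1\}$ to meet the $C\ge1$ requirement in the definition of $\preceq$; also, in your Step 2, positivity and monotonicity alone, via $g(N)\ge g(1)>0$, would already absorb the constant without using unboundedness.
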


\begin{proof}
If $f\asymp g$, then $f\preceq g$ and $g\preceq f$ with input-dilation constant $M=1$, so $f\simeq g$.  Conversely, suppose first that $f\preceq g$.  Thus, for some $C\ge1$ and integer $M\ge1$,
\[
        f(N)\le Cg(MN)+C
\]
for all sufficiently large $N$.  Scaling invariance gives $g(MN)\le C_1g(N)$ for large $N$, and unboundedness absorbs the additive constant.  Hence $f\lesssim_\infty g$.  Applying the same argument to $g\preceq f$ proves $g\lesssim_\infty f$, and therefore $f\asymp g$.
\end{proof}

\begin{defn}[Distorted sequences]\label{d:distorted}
Let $(X,d_X)$ be a metric space, let $I\in\{\N,\Z\}$, and let $f:\N\to(0,\infty)$.  A sequence $(p_n)_{n\in I}$ in $X$ is \emph{$f$-distorted} if there are constants $A,B>0$ and an integer $N_0\ge1$ such that
\begin{equation}\label{e:fdist}
        A f(|n-m|)\le d_X(p_n,p_m)\le B f(|n-m|)
\end{equation}
whenever $m,n\in I$ and $|n-m|\ge N_0$.  We then say that $f$ is \emph{realizable in $X$ with index set $I$}.  For $f(k)=k^\alpha$, the sequence is also \emph{$\alpha$-power-distorted}.
\end{defn}

The constants and threshold in \eqref{e:fdist} may depend on $X$, its metric, $f$, and the sequence, but not on $m,n$.  In particular, repetitions $p_n=p_m$ are permitted, but only when $|n-m|<N_0$.

\begin{lem}[Bounded increments and a linear upper bound]\label{l:bounded-increments}
Let $(X,d_X)$ be a metric space, let $I\in\{\N,\Z\}$, let $f:\N\to(0,\infty)$, and suppose that $(p_n)_{n\in I}$ is $f$-distorted.  Then there is a constant $C>0$ such that
\[
        d_X(p_n,p_{n+1})\le C
\]
for every $n\in I$ with $n+1\in I$.  Consequently,
\begin{equation}\label{e:sequence-linear-upper}
        d_X(p_n,p_m)\le C|n-m|
        \qquad(m,n\in I).
\end{equation}
\end{lem}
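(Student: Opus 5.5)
The plan is to bound consecutive increments by comparing each with two far-apart points, using only the upper half of \eqref{e:fdist} and the triangle inequality. Note that $f$ is an arbitrary positive function here, not assumed monotone, so we cannot bound $f(N_0+1)$ by $f(N_0)$. We only use the finitely many values $f(N_0),\dots,f(2N_0+1)$.

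Fix $n\in I$ with $n+1\in I$. Choose an auxiliary index $k\in I$ with $|k-n|\ge N_0$ and $|k-(n+1)|\ge N_0$, and with both differences at most $2N_0+1$. If $I=\Z$, take $k=n+1+N_0$. If $I=\N$, the same choice works, since $n+1+N_0\ge 1$. Then $|k-n|=N_0+1$ and $|k-(n+1)|=N_0$. By the triangle inequality and the upper bound in \eqref{e:fdist},
\[
        d_X(p_n,p_{n+1})\le d_X(p_n,p_k)+d_X(p_k,p_{n+1})\le B f(N_0+1)+Bf(N_0).
\]
So $C=B\bigl(f(N_0)+f(N_0+1)\bigr)$ works, and it is independent of $n$. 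A forward choice of $k$ always exists in either index set, so no boundary case arises for $\N$.

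For \eqref{e:sequence-linear-upper}, take $m<n$ and telescope:
\[
        d_X(p_m,p_n)\le\sum_{j=m}^{n-1}d_X(p_j,p_{j+1})\le C(n-m).
\]
When $m=n$ both sides vanish. There is no real obstacle; the only point needing care is to avoid any monotonicity assumption on $f$, which is why we fix the auxiliary index at a definite distance instead of bounding by $\sup f$.
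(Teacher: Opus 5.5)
Your proposal is correct and is essentially the paper's proof. The paper also routes $p_n$ and $p_{n+1}$ through the forward point $p_{n+N_0+1}$ to get $C=B\bigl(f(N_0+1)+f(N_0)\bigr)$, then telescopes along consecutive terms.
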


\begin{proof}
Let $A,B,N_0$ be distortion data for $(p_n)$.  For every $n\in I$ with $n+1\in I$, both index gaps appearing below are at least $N_0$, and hence
\[
\begin{aligned}
        d_X(p_n,p_{n+1})
        &\le d_X(p_n,p_{n+N_0+1})
             +d_X(p_{n+N_0+1},p_{n+1})\\
        &\le Bf(N_0+1)+Bf(N_0).
\end{aligned}
\]
Thus the first assertion holds with $C=B\bigl(f(N_0+1)+f(N_0)\bigr)$.  The second follows by applying the triangle inequality along the consecutive terms from $p_n$ to $p_m$.
\end{proof}

Recall that, by convention, for a rate function $f:\N\to(0,\infty)$ we set $f(t)=0$ for every integer $t\le 0$.

\begin{prop}[Equivalent normalizations of distorted sequences]\label{p:distorted-normalizations}
Let $(X,d_X)$ be a metric space, let $I\in\{\N,\Z\}$, let $f:\N\to(0,\infty)$, and let $(p_n)_{n\in I}$ be a sequence in $X$.  Whenever $(p_n)$ is $f$-distorted, there is a constant $B_0>0$ such that
\begin{equation}\label{e:fdist-upper-all}
        d_X(p_n,p_m)\le B_0 f(|n-m|)
        \qquad(m,n\in I,\ m\ne n).
\end{equation}
If, in addition, $f$ is a rate function, then the following conditions are equivalent.
\begin{enumerate}[label=\textup{(\roman*)}]
\item The sequence $(p_n)$ is $f$-distorted.

\item There are constants $A,B>0$ and $C\ge0$ such that, for all $m,n\in I$,
\begin{equation}\label{e:fdist-additive}
        A f(|n-m|)-C
        \le d_X(p_n,p_m)
        \le B f(|n-m|)+C.
\end{equation}

\item There are a constant $C\ge1$ and integers $M\ge1$ and $R\ge0$ such that, for all $m,n\in I$, writing $k=|n-m|$,
\begin{equation}\label{e:fdist-coarse-affine}
        \frac1C f\left(\left\lfloor\frac{k-R}{M}\right\rfloor\right)-C
        \le d_X(p_n,p_m)
        \le C f(Mk+R)+C.
\end{equation}

\item There are an integer $K\ge1$ and constants $a,b>0$ such that the global upper estimate \eqref{e:fdist-upper-all} holds for some $B_0>0$, and the arithmetic subsequence $q_j=p_{Kj}$ satisfies
\begin{equation}\label{e:fdist-arithmetic-subsequence}
        a f(|i-j|)
        \le d_X(q_i,q_j)
        \le b f(|i-j|)
        \qquad(i,j\in I,\ i\ne j).
\end{equation}

\item There exists an integer $C\ge 1$ such that for all $m,n\in I$ we have
\begin{equation}\label{e:distorted-simplified}
      \frac{1}{C}f(\lfloor \frac{1}{C}|n-m|-C\rfloor)\le d_X(p_n,p_m)\le C f(C|n-m|+C)+C.
\end{equation}

\end{enumerate}
Whenever these equivalent conditions hold, every bounded subset $E\subseteq X$ contains only finitely many terms of the sequence, in the precise sense that
\[
        \{n\in I:p_n\in E\}
\]
is finite.  Moreover, if \eqref{e:fdist} holds with threshold $N_0$, then \eqref{e:fdist-arithmetic-subsequence} holds for every integer $K\ge N_0$, with constants that may depend on $K$.
\end{prop}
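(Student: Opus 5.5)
The plan is to prove the preliminary global upper bound first, then run a cycle of implications (i)$\Rightarrow$(ii)$\Rightarrow$(iii)$\Rightarrow$(v)$\Rightarrow$(i), and handle (iv) and the two closing assertions separately.

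\textbf{Global upper bound.} For \eqref{e:fdist-upper-all}, use \cref{l:bounded-increments}. For $1\le |n-m|<N_0$ we have $d_X(p_n,p_m)\le CN_0$. Since $f>0$, the finitely many values $f(1),\dots,f(N_0-1)$ have a positive minimum $\mu$, so $d_X(p_n,p_m)\le (CN_0/\mu) f(|n-m|)$ there. Take $B_0=\max(B,CN_0/\mu)$. This needs no monotonicity.

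\textbf{The cycle (i)$\Rightarrow$(ii)$\Rightarrow$(iii)$\Rightarrow$(v)$\Rightarrow$(i).}
\begin{itemize}
\item[(i)$\Rightarrow$(ii):] For $k<N_0$ the lower bound holds with $C=A f(N_0)$, using monotonicity and $f(0)=0$. The upper bound is \eqref{e:fdist-upper-all}.
\item[(ii)$\Rightarrow$(iii):] Take $M=1$, $R=0$, and enlarge the constant.
\item[(iii)$\Rightarrow$(v):] Choose an integer $C'\ge \max(C,M,R)$. Then $Mk+R\le C'k+C'$. Also, for $k\ge R$,
\[
\lfloor (k-R)/M\rfloor\ge \lfloor k/C'-C'\rfloor;
\]
this is checked by comparing real numbers and noting that the floor is monotone. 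When $k<R$, the right-hand floor is nonpositive, so its $f$-value is $0$. The subtractive constant in the lower bound must also be removed. Use unboundedness: once $f(\lfloor (k-R)/M\rfloor)\ge 2C^2$, we get $\tfrac1C f-C\ge \tfrac1{2C}f$. Below that threshold, enlarge the shift $C'$ so that $\lfloor k/C'-C'\rfloor\le 0$ on the finitely many exceptional $k$.
\item[(v)$\Rightarrow$(i):] For large $k$, scaling invariance gives $f(Ck+C)\le f(2Ck)\lesssim f(k)$. Similarly $\lfloor k/C-C\rfloor\ge \lfloor k/(2C)\rfloor\ge k/(4C)$ once $k$ is large, and $f(\lfloor k/(4C)\rfloor)\gtrsim f(k)$. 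The last step needs scaling invariance applied to floors. Write $k=4Cj+r$ with $0\le r<4C$; then $f(k)\le f(4C(j+1))\lesssim f(j+1)\le f(2j)\lesssim f(j)$. Unboundedness absorbs the $+C$.
\end{itemize}

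\textbf{Condition (iv).} For (i)$\Rightarrow$(iv), take any integer $K\ge N_0$. Then $|Ki-Kj|\ge N_0$ for $i\ne j$, and scaling invariance converts $f(K|i-j|)$ into $f(|i-j|)$. This also proves the last assertion of the statement. For (iv)$\Rightarrow$(i), given $n,m$ with $k=|n-m|$ large, pick $i=\lfloor n/K\rfloor$ and $j=\lfloor m/K\rfloor$. Then $d_X(p_n,q_i)\le B_0 f(K)$ by the global upper bound. For the lower estimate use
\[
d_X(p_n,p_m)\ge a f(|i-j|)-2B_0f(K),
\]
with $|i-j|\ge k/K-1$, then scaling invariance. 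This yields (ii), since the additive constant is allowed there, and closes the loop.

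\textbf{Finiteness in bounded sets.} If $E$ has diameter $D$ and contains infinitely many terms, then it contains terms $p_n,p_m$ with $|n-m|$ arbitrarily large. But (ii) gives $d_X(p_n,p_m)\ge Af(k)-C\to\infty$ by unboundedness, a contradiction.

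\textbf{Expected obstacle.} The main difficulty is bookkeeping in (iii)$\Rightarrow$(v): the floor arguments, the convention $f\equiv0$ on nonpositive integers, and absorbing the subtractive constant into a single integer $C$. The approach is to handle small $k$ entirely by the $f(\le 0)=0$ convention after enlarging the shift.
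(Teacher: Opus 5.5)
Your proposal is correct and uses essentially the same ingredients as the paper's proof: bounded increments for the global upper bound, the convention $f\equiv 0$ on nonpositive integers together with a large integer shift to make the lower bound in (v) vacuous for small gaps, and monotonicity plus scaling invariance to pass through floors. The only differences are the arrangement into the single cycle (i)$\Rightarrow$(ii)$\Rightarrow$(iii)$\Rightarrow$(v)$\Rightarrow$(i), where the paper compares each condition with (i) separately, and two bookkeeping fixes: in (iii)$\Rightarrow$(v) you also need $C'\ge 2C$ to pass from $\frac{1}{2C}f$ to $\frac{1}{C'}f$, and in (iv)$\Rightarrow$(i) with $I=\N$ the floor $\lfloor\cdot/K\rfloor$ of the smaller index can be $0\notin\N$, which the paper avoids by using the ceiling $\lceil m/K\rceil$ for the smaller index $m$.
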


\begin{proof}
Suppose first, without assuming that $f$ is a rate function, that $(p_n)$ is $f$-distorted with data $A,B,N_0$.  Let $C>0$ be the constant from \cref{l:bounded-increments}.  If $m<n$ and $r=n-m<N_0$, then \eqref{e:sequence-linear-upper} gives
\[
        d_X(p_m,p_n)\le Cr.
\]
Together with the original upper estimate for gaps at least $N_0$, and using $f(r)>0$ for $1\le r<N_0$, these finitely many estimates can be absorbed into a constant $B_0>0$, proving \eqref{e:fdist-upper-all}.

Assume from now on that $f$ is a rate function.  Continuing under condition~\textup{(i)}, choose
\[
        C\ge A\max\{f(r):0\le r<N_0\}.
\]
Then the lower inequality in \eqref{e:fdist-additive} is automatic for $|n-m|<N_0$, while for larger gaps it follows from \eqref{e:fdist}; the global upper estimate proves the other inequality.  Thus (i) implies (ii).

Suppose that (ii) holds.  Since $f$ is unbounded, for all sufficiently large $k$ one has
\[
        C\le \frac{A}{2}f(k)
        \qquad\text{and}\qquad
        C\le f(k).
\]
Hence \eqref{e:fdist-additive} gives
\[
        \frac{A}{2}f(k)
        \le d_X(p_n,p_m)
        \le (B+1)f(k)
\]
whenever $k=|n-m|$ is sufficiently large.  Thus (ii) implies (i).

Condition (ii) implies (iii) by taking $M=1$, $R=0$, and increasing a single constant $C$ if necessary.  Conversely, suppose that (iii) holds.  Put
\[
        h(k)=\left\lfloor\frac{k-R}{M}\right\rfloor.
\]
For all sufficiently large $k$, one has $h(k)\ge1$ and $k\le 2M h(k)$.  By monotonicity and scaling invariance,
\[
        f(k)\lesssim f(h(k)).
\]
Also, for $k\ge1$,
\[
        Mk+R\le (M+R)k,
\]
so again monotonicity and scaling invariance give
\[
        f(Mk+R)\lesssim f(k).
\]
After increasing the threshold, unboundedness absorbs the additive constants in \eqref{e:fdist-coarse-affine}; hence (i) follows.

Condition~\textup{(v)} implies condition~\textup{(iii)} by taking $M=C$ and $R=C^2$ and using monotonicity in the upper estimate.  Conversely, suppose that~\textup{(i)} holds with data $A,B,N_0$, and let $B_0$ be the global upper constant in \eqref{e:fdist-upper-all}.  Choose an integer
\[
        C\ge \max\left\{1,B_0,A^{-1},\sqrt{N_0}\right\}.
\]
For $k=|n-m|<C^2$, the lower estimate in~\textup{(v)} is automatic because its argument of $f$ is negative.  For $k\ge C^2$, monotonicity gives
\[
        \frac1C f\left(\left\lfloor\frac{k}{C}-C\right\rfloor\right)
        \le A f(k)\le d_X(p_n,p_m).
\]
For $k\ge1$, the global upper estimate and monotonicity give
\[
        d_X(p_n,p_m)\le B_0f(k)\le C f(Ck+C)+C,
\]
and the case $k=0$ is trivial.  Thus~\textup{(i)} implies~\textup{(v)}, completing the comparison of these conditions.

It remains to compare (i) and (iv).  The global upper estimate has already been proved under (i).  Fix an integer $K\ge N_0$.  For $i\ne j$, the index gap $K|i-j|$ is at least $N_0$, and therefore
\[
        A f(K|i-j|)
        \le d_X(p_{Ki},p_{Kj})
        \le B f(K|i-j|).
\]
Scaling invariance, together with adjustment over finitely many initial values, yields \eqref{e:fdist-arithmetic-subsequence}.  Thus (i) implies (iv), as well as the final assertion concerning every $K\ge N_0$.

Conversely, suppose that (iv) holds.  Let $m<n$, put $k=n-m$, and choose
\[
        i=\left\lceil\frac{m}{K}\right\rceil,
        \qquad
        j=\left\lfloor\frac{n}{K}\right\rfloor.
\]
For all sufficiently large $k$, one has $i<j$ and
\[
        j-i\ge \frac{k}{2K}.
\]
The two endpoint gaps are less than $K$, so \eqref{e:fdist-upper-all} gives a uniform constant $D\ge0$ such that
\[
        d_X(p_m,p_{Ki})\le D,
        \qquad
        d_X(p_n,p_{Kj})\le D.
\]
The triangle inequality and \eqref{e:fdist-arithmetic-subsequence} therefore give
\[
        d_X(p_m,p_n)
        \ge a f(j-i)-2D.
\]
Since $k\le2K(j-i)$, monotonicity and scaling invariance imply $f(k)\lesssim f(j-i)$.  Unboundedness absorbs $2D$ for all sufficiently large $k$, giving the required lower estimate in \eqref{e:fdist}.  The required upper estimate is already contained in \eqref{e:fdist-upper-all}.  Hence (iv) implies (i).

Also, if a bounded subset $E\subseteq X$ contained $p_n$ for infinitely many indices $n$, those indices would contain pairs with arbitrarily large gaps, contradicting the lower estimate in (i).  This proves the remaining assertion.

\end{proof}

Thus, for a fixed sequence, the large-gap definition permits short-range repetitions or other degeneracies. It does not change which rate functions are realizable: every distorted sequence has a fixed arithmetic subsequence satisfying the all-gap multiplicative estimates.

\begin{rem}
A bi-infinite example always gives a semi-infinite one by restriction.  The converse need not hold, as the tree examples below show.
\end{rem}

\begin{lem}[Equivalent rates give equivalent distortion]\label{l:growth-representatives}
Let $f,g$ be rate functions with $f\simeq g$ (equivalently, $f\asymp g$), and let $(p_n)_{n\in I}$ be a sequence in a metric space $X$, where $I\in\{\N,\Z\}$.  Then $(p_n)$ is $f$-distorted if and only if it is $g$-distorted.
\end{lem}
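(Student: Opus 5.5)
The argument is symmetric in $f$ and $g$, so it suffices to show that an $f$-distorted sequence is $g$-distorted. By \cref{l:rate-equivalences}, the hypothesis $f\simeq g$ gives $f\asymp_\infty g$. Thus there are constants $0<c\le c'<\infty$ and an integer $N_1\ge1$ such that
\[
        c\,g(N)\le f(N)\le c'\,g(N)
        \qquad(N\ge N_1).
\]
This is the only place where the rate-function hypotheses (scaling invariance and unboundedness) are used, through \cref{l:rate-equivalences}.

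Now suppose $(p_n)_{n\in I}$ is $f$-distorted with data $A,B,N_0$ as in \cref{d:distorted}. Set $N_2=\max\{N_0,N_1\}$. For $m,n\in I$ with $|n-m|\ge N_2$, estimate \eqref{e:fdist} combined with the comparison above gives
\[
        Ac\,g(|n-m|)\le A f(|n-m|)\le d_X(p_n,p_m)\le B f(|n-m|)\le Bc'\,g(|n-m|).
\]
This is exactly \eqref{e:fdist} for $g$, with constants $Ac$ and $Bc'$ and threshold $N_2$. Hence $(p_n)$ is $g$-distorted.

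Exchanging the roles of $f$ and $g$, using $g\asymp_\infty f$, gives the converse. The threshold $N_0$ in \cref{d:distorted} is what lets us ignore the finitely many small values of $N$ where the comparison $f\asymp_\infty g$ might fail, so no real obstacle arises. The only point needing care is that the parenthetical equivalence $f\simeq g\Leftrightarrow f\asymp g$ is exactly \cref{l:rate-equivalences}.
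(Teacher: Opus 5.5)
Your proposal is correct and follows the paper's proof: you obtain $c\,g\le f\le c'\,g$ beyond some $N_1$ from \cref{l:rate-equivalences}, then raise the distortion threshold to $\max\{N_0,N_1\}$ to transfer the two-sided estimate \eqref{e:fdist}. Your explicit constants $Ac$ and $Bc'$ simply spell out what the paper leaves implicit.
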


\begin{proof}
By \cref{l:rate-equivalences}, there are constants $c,C>0$ and an integer $N_1\ge1$ such that
\[
        c g(N)\le f(N)\le Cg(N)
        \qquad(N\ge N_1).
\]
Increasing the distortion threshold to at least $N_1$ transfers the two-sided estimates between $f$ and $g$.
\end{proof}

\begin{prop}[Sequence distortion as a quasi-isometry invariant]\label{l:qi-invariance}
Let $I\in\{\N,\Z\}$, let $f$ be a rate function, and let $X,Y$ be quasi-isometric metric spaces.  Then $X$ contains an $f$-distorted sequence indexed by $I$ if and only if $Y$ does.
\end{prop}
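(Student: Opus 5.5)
By symmetry of the quasi-isometry relation it suffices to prove one direction. Suppose $F:X\to Y$ is an $(L,C)$-quasi-isometric embedding and $(p_n)_{n\in I}$ is $f$-distorted in $X$. We will show that $q_n=F(p_n)$ is $f$-distorted in $Y$. Only the embedding property of $F$ is needed, so this direction already holds for quasi-isometric embeddings. For the converse, we apply the same argument to a quasi-inverse $\bar F:Y\to X$, which is again a quasi-isometric embedding.

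We work with the additive normalization (ii) of \cref{p:distorted-normalizations}, which is equivalent to $f$-distortion because $f$ is a rate function. Choose constants $A,B>0$ and $C_0\ge0$ with
\[
  A f(|n-m|)-C_0\le d_X(p_n,p_m)\le Bf(|n-m|)+C_0
\]
for all $m,n\in I$. Composing with the inequalities defining $F$ gives
\[
  \frac{A}{L}f(|n-m|)-\frac{C_0}{L}-C\le d_Y(q_n,q_m)\le LBf(|n-m|)+LC_0+C .
\]
This is again of the form \eqref{e:fdist-additive}, with constants $A/L$, $LB$ and $C_0/L+C+LC_0$. Applying (ii)$\Rightarrow$(i) of \cref{p:distorted-normalizations} shows that $(q_n)$ is $f$-distorted. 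Unboundedness of $f$ is what absorbs the additive errors, and this is built into that implication.

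No step is a genuine obstacle. The one point needing care is that the large-gap definition \eqref{e:fdist} does not control small gaps, so composing with $F$ directly could leave small-gap terms unbounded. Passing through normalization (ii) avoids this: its global upper bound comes from \cref{l:bounded-increments}, and its lower bound is made automatic for small gaps by the additive constant.
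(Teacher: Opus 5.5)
Your proposal is correct and matches the paper's proof: both pass to the additive normalization~(ii) of \cref{p:distorted-normalizations}, compose with the $(L,C)$ quasi-isometric embedding inequalities to get constants $A/L$ and $LB$ with a single additive error, return to $f$-distortion via (ii)$\Rightarrow$(i), and obtain the converse by applying the same argument to a coarse inverse. Your remark that each direction uses only the quasi-isometric embedding property is a correct sharpening.
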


\begin{proof}
We prove one implication.  Let $F:X\to Y$ be an $(L,C)$-quasi-isometry and suppose that $(p_n)$ is $f$-distorted in $X$.  By condition~\textup{(ii)} of \cref{p:distorted-normalizations}, there are constants $a,b>0$ and $D\ge0$ such that, for all $m,n\in I$,
\[
        a f(|n-m|)-D
        \le d_X(p_n,p_m)
        \le b f(|n-m|)+D.
\]
It follows that
\[
        \frac{a}{L}f(|n-m|)-\left(\frac{D}{L}+C\right)
        \le d_Y(F(p_n),F(p_m))
        \le Lb f(|n-m|)+(LD+C)
\]
for all $m,n\in I$.  Thus the image sequence again satisfies condition~\textup{(ii)} of \cref{p:distorted-normalizations}, and hence is $f$-distorted in $Y$.  Applying the same argument to a coarse inverse of $F$ proves the converse.
\end{proof}

\begin{defn}[Sequence distortion spectrum]\label{d:sequence-spectrum}
Let $X$ be a metric space and $I\in\{\N,\Z\}$.  The \emph{$I$-indexed sequence distortion spectrum} of $X$ is
\[
        \mathcal D_I(X)
        =\bigl\{[f]_{\simeq}: f\text{ is a rate function realizable in $X$ with index set $I$}\bigr\}.
\]
Here $[f]_{\simeq}$ denotes the growth type of $f$.  We also refer to $\mathcal D_{\N}(X)$ and $\mathcal D_{\Z}(X)$ as the \emph{one-sided} and \emph{two-sided sequence distortion spectra}, respectively.
\end{defn}

\begin{prop}[Quasi-isometry invariance of the sequence distortion spectrum]\label{p:spectrum-qi}
Let $X$ be a metric space and $I\in\{\N,\Z\}$.  If rate functions $f\simeq g$, then $f$ is realizable in $X$ with index set $I$ exactly when $g$ is; thus $\mathcal D_I(X)$ is well defined.  If $X$ and $Y$ are quasi-isometric, then
\[
        \mathcal D_I(X)=\mathcal D_I(Y).
\]
\end{prop}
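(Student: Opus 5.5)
The statement has two parts, and both follow almost directly from results already established; the work is in checking that the definitions line up.

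First I will show that $\mathcal D_I(X)$ is well defined. Let $f$ and $g$ be rate functions with $f\simeq g$. By \cref{l:rate-equivalences} this gives $f\asymp g$. Then \cref{l:growth-representatives} shows that a given sequence $(p_n)_{n\in I}$ in $X$ is $f$-distorted exactly when it is $g$-distorted. Since this holds for every individual sequence, $f$ is realizable in $X$ with index set $I$ if and only if $g$ is. So membership of the class $[f]_{\simeq}$ in $\mathcal D_I(X)$ does not depend on which representative we test. One further point: every member of a class $[f]_{\simeq}$ that appears in $\mathcal D_I(X)$ must be a rate function, so the equivalence is only ever applied among rate functions, which is exactly the setting of \cref{l:rate-equivalences}.

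Second, for quasi-isometry invariance, let $X$ and $Y$ be quasi-isometric. I will show $\mathcal D_I(X)\subseteq\mathcal D_I(Y)$; the reverse inclusion follows because quasi-isometry is symmetric, witnessed by a coarse inverse. Take $[f]_{\simeq}\in\mathcal D_I(X)$, with $f$ a realizable rate function representative. By \cref{l:qi-invariance}, $Y$ contains an $f$-distorted sequence indexed by $I$. Hence $f$ is realizable in $Y$, and $[f]_{\simeq}\in\mathcal D_I(Y)$.

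I expect no real obstacle. The only care needed is in the well-definedness step, where we must note that the realizability question is posed only for rate functions. This matters because \cref{l:rate-equivalences}, which upgrades $\simeq$ to $\asymp$, uses scaling invariance and unboundedness, so it applies to rate functions and not to arbitrary functions.
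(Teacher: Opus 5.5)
Your proposal is correct and follows essentially the same route as the paper. Well-definedness comes from \cref{l:growth-representatives}, which already builds in \cref{l:rate-equivalences}, and equality of the spectra comes from \cref{l:qi-invariance}; that lemma is itself an equivalence, so no separate symmetry argument is needed. One small point: your aside that every member of a growth class in $\mathcal D_I(X)$ must be a rate function is not literally true, since a class $[f]_{\simeq}$ also contains non-monotone functions. Nothing depends on this, because the proposition only compares rate functions $f\simeq g$.
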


\begin{proof}
The first assertion follows from \cref{l:growth-representatives}, and the second from \cref{l:qi-invariance}.
\end{proof}

\begin{rem}[At-most-linear rates]\label{r:at-most-linear-rates}
The bounded-increment estimate in \cref{l:bounded-increments}, combined with the lower inequality in \eqref{e:fdist}, shows directly that every realizable function satisfies $f(N)\lesssim N$.  Thus, in studying sequence distortion, it is enough to consider functions of at most linear growth.  Within this range, the scaling-invariance condition in \cref{d:rate-function} is a mild regularity requirement rather than a burdensome restriction: it is satisfied by the standard power rates $N^\alpha$ with $0<\alpha\le1$, as well as the logarithmic and iterated-logarithmic rates considered here, while excluding irregular behavior under fixed integer rescalings.  The next subsection records at-most-linearity and the related quasi-subadditivity constraint precisely.
\end{rem}

\subsection{Elementary restrictions on realizable functions}\label{s:prelim-rate-constraints}

\begin{lem}[Elementary constraints on realizable functions]\label{l:rate-constraints}
Let $I\in\{\N,\Z\}$, and suppose that $(p_n)_{n\in I}$ is $f$-distorted with data $A,B,N_0$. Then:
\begin{enumerate}[label=\textup{(\roman*)}]
\item if $p_n=p_m$, then $|n-m|<N_0$; in particular, every point of $X$ occurs at most $N_0$ times in the sequence;
\item there is a constant $C_0\ge1$ such that, for all integers $m,n\ge1$,
\begin{equation}\label{e:rate-quasi-subadditive}
        f(m+n)\le C_0\bigl(f(m)+f(n)\bigr);
\end{equation}
\item after increasing $C_0$ if necessary, for all $N\ge1$,
\begin{equation}\label{e:rate-at-most-linear}
        f(N)\le C_0N.
\end{equation}
In particular, every realizable function is at most linear.
\end{enumerate}
\end{lem}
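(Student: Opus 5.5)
The plan is to extract all three parts directly from the distortion inequality \eqref{e:fdist}. The upper side will come from the global estimate \eqref{e:fdist-upper-all} of \cref{p:distorted-normalizations} and the bounded-increment bound of \cref{l:bounded-increments}. Throughout we use only that $f$ is positive-valued: the lemma does not assume $f$ is a rate function, so no monotonicity or scaling invariance may be invoked. The finitely many small gaps will be handled separately using $f(r)>0$.

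For (i), if $|n-m|\ge N_0$ then \eqref{e:fdist} gives $d_X(p_n,p_m)\ge Af(|n-m|)>0$, so $p_n\ne p_m$. Hence equal terms have index gap less than $N_0$. Consequently, all indices at which a given point occurs lie in an interval of length less than $N_0$. There are at most $N_0$ such indices.

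For (ii), let $B_0$ be the constant of \eqref{e:fdist-upper-all}, valid for every gap $\ge1$. Fix integers $m,n\ge1$ and choose an index $i\in I$ such that $i,i+m,i+m+n\in I$; for instance $i=1$ works in either index set. First suppose $m+n\ge N_0$. Then the lower bound in \eqref{e:fdist}, the triangle inequality, and the global upper bound give
\[
        Af(m+n)\le d_X(p_i,p_{i+m+n})\le d_X(p_i,p_{i+m})+d_X(p_{i+m},p_{i+m+n})\le B_0\bigl(f(m)+f(n)\bigr).
\]
So the inequality holds with constant $B_0/A$. Now suppose $m+n<N_0$. Then $f(m+n)\le\max\{f(r):1\le r<N_0\}$. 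Also $f(m)+f(n)\ge\min\{f(r):1\le r<N_0\}>0$, since $m,n<N_0$. The ratio of these two finite positive quantities bounds $f(m+n)/(f(m)+f(n))$. Taking $C_0$ to be the larger of $1$, $B_0/A$, and this ratio proves \eqref{e:rate-quasi-subadditive}.

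For (iii), let $C$ be the constant of \cref{l:bounded-increments}. For $N\ge N_0$, pick $i\in I$ with $i+N\in I$. Then
\[
        Af(N)\le d_X(p_i,p_{i+N})\le CN,
\]
so $f(N)\le (C/A)N$. For $1\le N<N_0$ there are only finitely many values, and $f(N)\le\max_{r<N_0}f(r)\cdot N$. Enlarging $C_0$ to cover both cases gives \eqref{e:rate-at-most-linear}.

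No step is a genuine obstacle. The only point needing care is the small-gap case in (ii): the lower bound of \eqref{e:fdist} is unavailable there. Because monotonicity is not assumed, the comparison must rely solely on the positivity of $f$ on the finite set $\{1,\dots,N_0-1\}$.
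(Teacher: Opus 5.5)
Your proof is correct and follows the paper's argument essentially step for step. You use the lower bound for (i), the indices $1,1+m,1+m+n$ with the global upper constant $B_0$ for (ii), and the bounded-increment constant for (iii), absorbing the finitely many small gaps into the constant; your explicit appeal to positivity of $f$ on $\{1,\dots,N_0-1\}$ simply spells out the absorption step that the paper leaves implicit.
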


\begin{proof}
The first assertion follows immediately from the lower bound in \eqref{e:fdist}; a set of integer indices having pairwise differences less than $N_0$ has at most $N_0$ elements.  Let $B_0$ be the global upper constant supplied by the first assertion of \cref{p:distorted-normalizations}.  If $m+n\ge N_0$, then the indices $1$, $1+m$, and $1+m+n$ and the triangle inequality give
\[
        A f(m+n)
        \le d_X(p_1,p_{1+m+n})
        \le d_X(p_1,p_{1+m})+d_X(p_{1+m},p_{1+m+n})
        \le B_0f(m)+B_0f(n).
\]
Increasing the constant to cover the finitely many pairs with $m+n<N_0$ proves \eqref{e:rate-quasi-subadditive}.  Let $C>0$ be the constant from \cref{l:bounded-increments}.  For $N\ge N_0$, the lower distortion estimate and \eqref{e:sequence-linear-upper} give
\[
        A f(N)
        \le d_X(p_1,p_{1+N})
        \le CN.
\]
Increasing the constant once more to cover $N<N_0$ proves \eqref{e:rate-at-most-linear}.
\end{proof}

In particular, no power function $N^\alpha$ with $\alpha>1$ is realizable.  Accordingly, all power-rate classifications below restrict to $0<\alpha\le1$.

\subsection{Hyperbolicity, trees, and asymptotic cones}\label{s:prelim-coarse}

A \emph{geodesic} in a metric space $X$ is an isometric embedding $\gamma:J\to X$, where $J$ is an interval in $\R$.  Its image is a \emph{geodesic segment}, \emph{geodesic ray}, or \emph{geodesic line} when $J$ is, respectively, a compact interval, a half-line, or all of $\R$.  We use the standard notions of a geodesic metric space and a proper metric space.

\begin{defn}[Hyperbolicity, trees, and cones]\label{d:hyp-cones}
A geodesic metric space is \emph{$\delta$-hyperbolic} if every geodesic triangle is $\delta$-thin.  For a basepoint $o\in X$, write
\[
        (x\mid y)_o=\frac12\bigl(d(x,o)+d(y,o)-d(x,y)\bigr).
\]
A sequence $(x_n)$ is a \emph{Gromov sequence} if $(x_m\mid x_n)_o\to\infty$ as $m,n\to\infty$; two Gromov sequences $(x_n)$ and $(y_n)$ are \emph{equivalent} if $(x_m\mid y_n)_o\to\infty$ as $m,n\to\infty$.  The set of equivalence classes is the \emph{sequential Gromov boundary} $\partial X$; for a hyperbolic space it is independent of the choice of $o$ up to canonical identification.

An \emph{$\R$-tree} is a geodesic metric space in which every two points are joined by a unique embedded arc, and this arc is a geodesic segment.  The \emph{valence} of a point in an $\R$-tree is the number of connected components of its complement.  A metric space is \emph{homogeneous} if its isometry group acts transitively on points.  An $\R$-tree has \emph{constant valence} if the valence is the same at every point.  A \emph{CAT$(-1)$ space} is a geodesic metric space whose geodesic triangles are no thicker than comparison triangles in the complete simply connected surface of constant curvature $-1$; in particular, by Cartan--Hadamard, a complete simply connected Riemannian manifold with sectional curvature at most $-1$ is CAT$(-1)$.

Given basepoints $x_i$, scaling factors $r_i\to\infty$, and a non-principal ultrafilter $\omega$, the \emph{asymptotic cone} $\operatorname{Cone}_\omega(X,(x_i),(r_i))$ is the ultralimit of $(X,r_i^{-1}d,x_i)$.  For background on hyperbolic and CAT spaces, $\R$-trees, and asymptotic cones, see \cite{BH,Chiswell,BuyaloSchroeder,DrutuKapovich}.  Throughout, $X\cong Y$ means that the metric spaces $X$ and $Y$ are isometric.
\end{defn}

\subsection{Dimension, filling, and metric regularity}\label{s:prelim-dimension-regularity}

\begin{defn}[Asymptotic dimension]\label{d:asdim}
The \emph{asymptotic dimension} of a metric space $X$ is at most $n$, written $\asdim X\le n$, if for every $R>0$ there is a uniformly bounded cover of $X$ whose \emph{$R$-multiplicity} is at most $n+1$, meaning that every ball of radius $R$ meets at most $n+1$ members of the cover.
\end{defn}

\begin{defn}[Filling invariants and functions]\label{d:filling}
\begin{enumerate}
\item For a simply connected geodesic space $X$ and a Lipschitz loop $\gamma:S^1\to X$, we use the standard parametrized Hausdorff area of a Lipschitz map $u:D^2\to X$,
\[
        \operatorname{Area}(u)
        =\int_{D^2}\mathbf J_2\bigl(\operatorname{ap\,md}u_z\bigr)\,d\mathcal L^2(z),
\]
where $\operatorname{ap\,md}u_z$ is the approximate metric derivative and $\mathbf J_2$ is its Hausdorff Jacobian; see \cite{LytchakWenger}.  This area is nonincreasing under postcomposition by a $1$-Lipschitz map.  For a smooth Riemannian target it agrees with the usual parametrized area and, for every smooth $2$-form $\omega$ of comass at most $1$, satisfies
\begin{equation}\label{e:area-form-bound}
        \left|\int_{D^2}u^*\omega\right|\le \operatorname{Area}(u).
\end{equation}
We shall also use the weak Stokes formula in the following standard form: if $M$ is a smooth Riemannian manifold, $u:D^2\to M$ is Lipschitz, and $\eta$ is a smooth $1$-form on a neighborhood of $u(D^2)$, then
\begin{equation}\label{e:lipschitz-stokes}
        \int_{D^2}u^*(d\eta)
        =\int_{\partial D^2}(u|_{\partial D^2})^*\eta.
\end{equation}
Let $\operatorname{FillArea}(\gamma)$ be the infimum of $\operatorname{Area}(u)$ over Lipschitz maps $u:D^2\to X$ with boundary trace $\gamma$; by convention, this infimum is $\infty$ if no such Lipschitz filling exists.  The \emph{Lipschitz filling function} is
\[
        \delta_X(L)=\sup\{\operatorname{FillArea}(\gamma):
        \operatorname{length}(\gamma)\le L\}.
\]
\item For nondecreasing functions $u,v:[0,\infty)\to[0,\infty)$, define \emph{filling dominance}, written $u\preceq_{\mathrm{fill}}v$, by requiring a constant $C\ge1$ such that
\[
        u(L)\le C v(CL+C)+CL+C
        \qquad(L\ge0).
\]
The functions are \emph{filling equivalent}, written $u\filleq v$, if $u\preceq_{\mathrm{fill}}v$ and $v\preceq_{\mathrm{fill}}u$.  We abbreviate equivalence to $L\mapsto L$ and $L\mapsto L^2$ by $u\filleq L$ and $u\filleq L^2$, respectively.  
\end{enumerate}
\end{defn}
Only these linear and quadratic filling classes occur in \cref{s:qi-diagnostic}.  Reshetnyak majorization \cite[Theorem~II.1A.6]{BH} applies to every closed rectifiable curve in a CAT$(\kappa)$ space, with the usual length restriction when $\kappa>0$; no simplicity assumption on the curve is required.  It gives a convex region in the model plane and a $1$-Lipschitz map of that region to the CAT$(\kappa)$ space whose boundary restriction preserves arclength and parametrizes the given curve.  The weakly monotone degree-one boundary reparametrizations arising from arclength parametrization do not change filling area: the two boundary maps are joined by a Lipschitz annulus whose image lies in the curve and therefore has zero parametrized area.  The Euclidean and hyperbolic isoperimetric inequalities therefore give
\[
        \operatorname{FillArea}(\gamma)\le \frac{L^2}{4\pi}
        \quad\text{in CAT$(0)$ spaces},
        \qquad
        \operatorname{FillArea}(\gamma)\le L
        \quad\text{in CAT$(-1)$ spaces}
\]
for every Lipschitz loop $\gamma$ of length $L$.  For background on asymptotic dimension, see \cite{GromovAsymptotic,Dranishnikov,Roe}; for filling invariants and parametrized Hausdorff area, see \cite{GromovAsymptotic,BH,DrutuKapovich,LytchakWenger}.

\begin{defn}[Bounded-turning arcs and Ahlfors regularity]\label{d:bt-ar}
A Jordan arc $\Gamma$ in a metric space is \emph{$C$-bounded turning} if, for every pair of points $x,y\in\Gamma$, the subarc $\Gamma[x,y]$ has diameter at most $C d(x,y)$.  A metric measure space $(Z,d,\mu)$ is \emph{Ahlfors $s$-regular} if there is $C\ge1$ such that
\[
        C^{-1}r^s\le \mu(B(z,r))\le Cr^s
\]
for all $z\in Z$ and all $0<r\le\diam Z$ when $Z$ is bounded, and for all $r>0$ when $Z$ is unbounded.  
\end{defn}

In this paper the measure is always comparable to $s$-dimensional Hausdorff measure on the set under discussion.  For background on Ahlfors regularity, bounded turning, doubling, and quasisymmetric metric geometry, see \cite{Heinonen}.

\subsection{Packing tools}\label{s:prelim-packing}

\begin{defn}[Packing function]\label{d:packing}
For $\varepsilon>0$, let
\[
        \Pack_\varepsilon(B_X(x,R))
\]
denote the \emph{packing number at scale $\varepsilon$} of $B_X(x,R)$, namely the supremum cardinality of an $\varepsilon$-separated subset; it may be infinite.  For a specified basepoint $o\in X$, write
\[
        V_{X,o,\varepsilon}(R)=\Pack_\varepsilon(B_X(o,R))
\]
for the corresponding \emph{basepoint packing function}.  In spaces without bounded geometry this basepoint function is weaker than the \emph{uniform packing function} obtained by taking a supremum over all centers; the distinction is important in \cref{s:qi-diagnostic}.

A geodesic metric space has \emph{bounded geometry} here if for every $R>0$ and every $\varepsilon>0$ one has
\[
        \sup_{x\in X}\Pack_\varepsilon(B_X(x,R))<\infty.
\]
For $Q>0$, we say that $X$ has \emph{polynomial packing exponent at most $Q$} if for every $\varepsilon>0$ there is $C_\varepsilon$ such that
\begin{equation}\label{e:poly-pack}
        \Pack_\varepsilon(B_X(x,R))\le C_\varepsilon(1+R)^Q
\end{equation}
for all $x\in X$ and $R\ge 0$.  We say that $X$ has an \emph{exponential packing bound} if for every $\varepsilon>0$ there are constants $C_\varepsilon\ge1$ and $h_\varepsilon>0$ such that
\begin{equation}\label{e:exp-pack}
        \Pack_\varepsilon(B_X(x,R))\le C_\varepsilon e^{h_\varepsilon R}
\end{equation}
for all $x\in X$ and $R\ge 0$.  For background on packing and covering numbers and on bounded geometry, see \cite{BBI,Roe,CornulierdeLaHarpe}.
\end{defn}

\begin{lem}[Bounded geometry gives exponential packing]\label{l:bg-exp-pack}
Every geodesic metric space with bounded geometry, in the sense of \cref{d:packing}, satisfies an exponential packing bound.
\end{lem}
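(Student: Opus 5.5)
The plan is the standard doubling-by-geodesics argument. Fix $\varepsilon>0$. By bounded geometry, set
\[
        K=\sup_{x\in X}\Pack_{\varepsilon/2}(B_X(x,\varepsilon))<\infty .
\]
I will show by induction on $j\ge 0$ that
\[
        \Pack_\varepsilon\bigl(B_X(x,j\varepsilon)\bigr)\le K^{\,j+1}
        \qquad(x\in X).
\]
For real $R\ge0$, take $j=\lceil R/\varepsilon\rceil$; this gives $\Pack_\varepsilon(B_X(x,R))\le K^{2}\,K^{R/\varepsilon}$. Hence \eqref{e:exp-pack} holds with $C_\varepsilon=K^2$ and $h_\varepsilon=\max\{1,\log K\}/\varepsilon$. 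Taking the maximum with $1$ in $h_\varepsilon$ ensures $h_\varepsilon>0$ even when $K=1$.

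The inductive step is a covering argument. Let $S$ be a maximal $\varepsilon$-separated subset of $B_X(x,(j+1)\varepsilon)$; any finite separated set bounds the packing number, so I may work with finite sets. Each point $y\in S$ with $d(x,y)>j\varepsilon$ is joined to $x$ by a geodesic, since $X$ is geodesic. Moving back along that geodesic by at most $\varepsilon$ gives a point $y'\in B_X(x,j\varepsilon)$ with $d(y,y')\le\varepsilon$.

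Next fix a maximal $\varepsilon$-separated set $T\subseteq B_X(x,j\varepsilon)$. By induction, $|T|\le K^{j+1}$, and $T$ is $\varepsilon$-dense in $B_X(x,j\varepsilon)$. Then every point of $B_X(x,(j+1)\varepsilon)$ lies within $2\varepsilon$ of $T$. So $S$ is covered by the balls $B_X(t,2\varepsilon)$ with $t\in T$, and each such ball contains at most $\Pack_\varepsilon(B_X(t,2\varepsilon))$ points of $S$.

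This gives the bound $|S|\le K'\,K^{j+1}$ with $K'=\sup_t\Pack_\varepsilon(B_X(t,2\varepsilon))$, which is finite by bounded geometry. The cleanest bookkeeping is therefore to define $K=\max\{\sup_x\Pack_\varepsilon(B_X(x,2\varepsilon)),\,1\}$ instead of the constant chosen above. The induction then reads $\Pack_\varepsilon(B(x,j\varepsilon))\le K^{j+1}$. The base case $j=0,1$ holds since $B(x,\varepsilon)\subseteq B(x,2\varepsilon)$. The step uses $|S|\le K\,|T|$.

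The one point needing care is the density claim in the step. A point $z$ with $j\varepsilon<d(x,z)\le(j+1)\varepsilon$ must be within $\varepsilon$ of the smaller ball, and this is exactly where the geodesic hypothesis enters. I will use closed balls throughout so that the boundary cases are harmless. Everything else is routine.
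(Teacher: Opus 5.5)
Your proof is correct, but it is organized differently from the paper's.

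\emph{Your route.} You induct on the radius in steps of $\varepsilon$. A maximal $\varepsilon$-separated set $T\subseteq B_X(x,j\varepsilon)$ exists and is finite by the inductive bound. By the geodesic hypothesis, $T$ is $2\varepsilon$-dense in $B_X(x,(j+1)\varepsilon)$. Hence each step multiplies the packing number by at most $K=\max\{1,\sup_x\Pack_\varepsilon(B_X(x,2\varepsilon))\}$.

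\emph{The paper's route.} It fixes a global maximal $r$-separated net with $r<\varepsilon/4$. Each point of an $\varepsilon$-separated set $S$ is encoded by the chain of net points that round the points at times $0,r,2r,\ldots$ along a geodesic from the center. The number of possible chains is at most $1+M+\cdots+M^J$, where $M=\sup_x\Pack_r(B_X(x,3r))$. The paper then checks that distinct points of $S$ receive distinct codes, because equal final entries would force the endpoints to be within $2r<\varepsilon$.

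\emph{Comparison.} Your argument is shorter and applies bounded geometry directly at scale $\varepsilon$ rather than at an auxiliary finer scale. It needs no global net, and it gives explicit constants $C_\varepsilon=K^2$ and $h_\varepsilon=\max\{1,\log K\}/\varepsilon$. The paper's path-counting is non-inductive: it bounds $|S|$ directly for each $R$, at the cost of the auxiliary net and the injectivity check.

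In a final write-up, discard your first choice $K=\sup_x\Pack_{\varepsilon/2}(B_X(x,\varepsilon))$, which is never used, and commit to the second definition of $K$ from the start.
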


\begin{proof}
Fix $\varepsilon>0$ and choose $0<r<\varepsilon/4$.  Let $\mathcal N$ be a maximal $r$-separated net in $X$; then $\mathcal N$ is $r$-dense.  By bounded geometry,
\[
        M=\sup_{x\in X}\Pack_r(B_X(x,3r))<\infty.
\]
Let $S$ be an $\varepsilon$-separated subset of $B_X(x,R)$.  For each $s\in S$, choose a geodesic from $x$ to $s$, record the points at times $jr$ with $j\ge0$ and $jr<d_X(x,s)$, and then record the endpoint $s$ as the final entry.  Replace each recorded point by a point of $\mathcal N$ within distance $r$, and include the chain length as part of the code.  There are at most $M$ choices for the first rounded point, since it lies in $B_X(x,r)$.  Consecutive rounded net-points are at distance at most $3r$, so after any rounded point there are at most $M$ possibilities for the next rounded point.  Put $J=\lceil R/r\rceil+2$; every rounded chain has length at most $J$.

If two points of $S$ produced the same code, their final rounded entries would agree, and the original endpoints would be at distance at most $2r<\varepsilon$, contradicting the $\varepsilon$-separation.  Thus the rounded-chain codes are distinct.  Since the chain lengths may vary, we obtain
\[
        |S|\le 1+M+\cdots+M^J
        \le (J+1)\max\{1,M^J\}
        \le C_\varepsilon e^{h_\varepsilon R}.
\]
This is the required exponential packing bound.
\end{proof}

\begin{lem}[Polynomial packing obstruction]\label{l:poly-pack}
Let $I\in\{\N,\Z\}$, let $f:\N\to(0,\infty)$ be nondecreasing, and suppose that $X$ satisfies \eqref{e:poly-pack} with exponent $Q>0$.  If $X$ contains an $f$-distorted sequence indexed by $I$, then
\[
        N^{1/Q}\lesssim_\infty f(N).
\]
In particular, if $f(N)=N^\alpha$, then $\alpha\ge 1/Q$.
\end{lem}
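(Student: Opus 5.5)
The argument is a packing count: an initial segment of the sequence is a large, well-separated set that sits inside a ball of controlled radius, so it cannot be too large for a space with polynomial packing.

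First, by the final assertion of \cref{p:distorted-normalizations}, I pass to the arithmetic subsequence $q_j=p_{Kj}$ with $K\ge N_0$. This subsequence satisfies
\[
a f(|i-j|)\le d_X(q_i,q_j)\le b f(|i-j|)\qquad(i\ne j).
\]
Since $f$ is nondecreasing and positive, any two distinct terms satisfy $d_X(q_i,q_j)\ge a f(1)=:\varepsilon>0$. So the terms of $(q_j)$ are pairwise $\varepsilon$-separated, and in particular distinct.

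Next, fix $N\ge1$ and take the $N$ points $q_1,\dots,q_N$. (If $I=\Z$, I simply use positive indices.) Every one of them lies within distance $b f(N)$ of $q_1$, using the upper estimate and monotonicity. So these points form an $\varepsilon$-separated subset of $B_X(q_1,bf(N))$. Applying \eqref{e:poly-pack} at scale $\varepsilon$ with center $q_1$ gives
\[
N\le \Pack_\varepsilon(B_X(q_1,bf(N)))\le C_\varepsilon(1+bf(N))^Q.
\]
Rearranging yields $N^{1/Q}\le C_\varepsilon^{1/Q}(1+bf(N))$.

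Finally, $f(N)\ge f(1)>0$, so the additive $1$ is absorbed: $1+bf(N)\le(f(1)^{-1}+b)f(N)$. This gives $N^{1/Q}\lesssim f(N)$ for all $N\ge1$, which in particular means $N^{1/Q}\lesssim_\infty f(N)$. For $f(N)=N^\alpha$ the bound reads $N^{1/Q}\le C N^\alpha$ for large $N$, which forces $\alpha\ge 1/Q$.

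I expect no real obstacle. The only care needed is in passing to the subsequence, to get a uniform separation constant for \emph{all} distinct pairs rather than only for gaps at least $N_0$. The fixed center $q_1$ is allowed because \eqref{e:poly-pack} holds uniformly in the center.
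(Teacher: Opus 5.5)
Your packing count is the same as the paper's, but one step cites a result outside its hypotheses. The final assertion of \cref{p:distorted-normalizations}, which gives $a f(|i-j|)\le d_X(p_{Ki},p_{Kj})\le b f(|i-j|)$ for all $i\ne j$, belongs to the part of that proposition that assumes $f$ is a rate function. Its proof needs scaling invariance. The raw data only give $d_X(p_{Ki},p_{Kj})\le Bf(K|i-j|)$, and replacing $f(K|i-j|)$ by $f(|i-j|)$ is exactly where $f(KN)\lesssim f(N)$ is used. In the present lemma $f$ is merely nondecreasing, not assumed scaling invariant or even unbounded, so the citation does not apply as written.

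The point you single out as delicate, uniform separation, needs no renormalization at all. Every gap in the progression $p_K,p_{2K},\dots$ is at least $K\ge N_0$, so the original lower bound and monotonicity already give separation $Af(K)$. The step that relied on the unverified hypothesis is the \emph{upper} bound, stated in terms of $f(N)$ rather than $f(KN)$.

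There are two easy repairs.

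The simplest is not to renormalize. The $N$ points $p_K,\dots,p_{KN}$ are $Af(K)$-separated and lie within $Bf(KN)$ of $p_K$. So \eqref{e:poly-pack}, together with your absorption of the additive $1$, gives $f(KN)\gtrsim N^{1/Q}$ for all $N\ge1$. For arbitrary $M\ge K$, monotonicity then gives $f(M)\ge f(K\lfloor M/K\rfloor)\gtrsim \lfloor M/K\rfloor^{1/Q}\gtrsim M^{1/Q}$. This is precisely the paper's proof, with $N_*$ in place of $K$.

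Alternatively, keep your normalized subsequence but justify it. \cref{l:rate-constraints}(ii), whose proof uses only the global upper bound \eqref{e:fdist-upper-all} valid for arbitrary $f$, gives $f(2n)\le 2C_0f(n)$. By monotonicity, $f(Kn)\le(2C_0)^m f(n)$ whenever $2^m\ge K$. This supplies the missing scaling step, so for nondecreasing realizable $f$ your subsequence estimate does hold and the rest of your argument goes through unchanged.
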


\begin{proof}
Let $(p_n)$ be such a sequence, with distortion data $A,B,N_*$.  For an integer $q\ge1$, consider
\[
        p_1,p_{1+N_*},\ldots,p_{1+qN_*}.
\]
These $q+1$ points lie in $B_X(p_1,Bf(qN_*))$ and are $Af(N_*)$-separated.  Hence
\[
        q+1\le C_{Af(N_*)}\bigl(1+Bf(qN_*)\bigr)^Q.
\]
It follows that $f(qN_*)\ge c q^{1/Q}$ for all sufficiently large $q$ and some $c>0$.  For arbitrary sufficiently large $N$, put $q=\lfloor N/N_*\rfloor$.  Monotonicity gives
\[
        f(N)\ge f(qN_*)\ge c q^{1/Q}\ge c' N^{1/Q}.
\]
The final assertion follows by taking $f(N)=N^\alpha$.
\end{proof}

\begin{lem}[Exponential packing obstruction]\label{l:exp-pack}
Let $I\in\{\N,\Z\}$, let $X$ satisfy \eqref{e:exp-pack}, and let $f:\N\to(0,\infty)$ be nondecreasing.  If $X$ contains an $f$-distorted sequence indexed by $I$, then there are constants $c,C>0$ and $N_0\ge2$ such that
\[
        f(N)\ge c\log N-C
        \qquad(N\ge N_0).
\]
In particular, no $f$-distorted sequence indexed by $I$ exists when $f(N)=o(\log N)$.
\end{lem}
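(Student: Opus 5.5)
The plan mirrors the proof of \cref{l:poly-pack}, with the polynomial bound \eqref{e:poly-pack} replaced by the exponential bound \eqref{e:exp-pack}. Let $(p_n)$ be $f$-distorted with data $A,B,N_*$, and use the global upper constant $B_0$ from \cref{p:distorted-normalizations}. For an integer $q\ge1$, consider the $q+1$ points
\[
        p_1,p_{1+N_*},\ldots,p_{1+qN_*}.
\]
Any two of these have index gap at least $N_*$. By monotonicity of $f$ and the lower estimate in \eqref{e:fdist}, they are $\varepsilon$-separated with $\varepsilon=Af(N_*)>0$. Each lies within distance $Bf(qN_*)$ of $p_1$, since $f$ is nondecreasing. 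To make them strictly separated, use the scale $\varepsilon/2$, which is harmless.

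Applying \eqref{e:exp-pack} at the scale $\varepsilon/2$ gives
\[
        q+1\le C_{\varepsilon/2}\exp\bigl(h_{\varepsilon/2}Bf(qN_*)\bigr).
\]
Taking logarithms yields
\[
        f(qN_*)\ge \frac{1}{h_{\varepsilon/2}B}\bigl(\log(q+1)-\log C_{\varepsilon/2}\bigr).
\]
For general $N\ge N_0:=\max\{2,2N_*\}$, put $q=\lfloor N/N_*\rfloor\ge1$. Monotonicity gives $f(N)\ge f(qN_*)$. Also $\log(q+1)\ge\log(N/N_*)=\log N-\log N_*$. Together these give $f(N)\ge c\log N-C$ with $c=1/(h_{\varepsilon/2}B)$ and $C=c(\log N_*+\log C_{\varepsilon/2})$.

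The final assertion follows immediately. If $f(N)=o(\log N)$, then $f(N)<\tfrac c2\log N$ for all large $N$, and this contradicts $f(N)\ge c\log N-C$ once $\tfrac c2\log N>C$. There is no real obstacle. The only care needed is that the separation scale must be a fixed positive constant, namely $Af(N_*)>0$ with $f$ positive, so that the constants $C_\varepsilon$ and $h_\varepsilon$ are fixed. Because the estimate only uses pairs with gaps at least $N_*$ and the global upper bound, no rate-function hypothesis on $f$ beyond monotonicity is needed.
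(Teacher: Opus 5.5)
Your proposal is correct and is essentially the paper's argument. Both proofs take the points $p_1,p_{1+N_*},\ldots,p_{1+qN_*}$, which are separated at the fixed scale $Af(N_*)$ and lie in a ball of radius $O(f(N))$ about $p_1$, apply the exponential packing bound, and take logarithms. Your scale $\varepsilon/2$ and radius $Bf(qN_*)$ (instead of $Bf(N)$) are cosmetic, and your explicit choice of $c$, $C$, $N_0$ spells out what the paper abbreviates as $q+1\asymp N$.
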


\begin{proof}
Let $A,B,N_*$ be distortion data and put $\varepsilon=Af(N_*)>0$.  For $N\ge N_*$, set $q=\lfloor N/N_*\rfloor$ and consider
\[
        p_1,p_{1+N_*},\ldots,p_{1+qN_*}.
\]
These $q+1$ points are $\varepsilon$-separated, by monotonicity of $f$, and lie in the ball $B_X(p_1,Bf(N))$.  Therefore
\[
        q+1\le \Pack_\varepsilon(B_X(p_1,Bf(N)))
        \le C_\varepsilon e^{h_\varepsilon Bf(N)}.
\]
Since $q+1\asymp N$, taking logarithms and adjusting constants gives
\[
        f(N)\ge c\log N-C
\]
for all sufficiently large $N$.
\end{proof}

\subsection{Snowflaked intervals in asymptotic cones}\label{s:prelim-compactness}

For general background on ultralimits and asymptotic cones, see \cite{GromovAsymptotic,DrutuKapovich,CornulierdeLaHarpe}.

\begin{lem}[The compactness principle]\label{l:compactness}
Let $(X,d)$ be a metric space, let $0<\alpha\le1$, and let $(p_n)_{n\in\N}$ be an $\alpha$-power-distorted sequence in $X$.  Put $r_N=N^\alpha$.  Then:

\begin{enumerate} 
\item For every non-principal ultrafilter $\omega$, the asymptotic cone
\[
        \operatorname{Cone}_\omega(X,p_1,(r_N))
\]
contains a bi-Lipschitz copy of $([0,1],|s-t|^\alpha)$.  
\item If, in addition, there is a fixed proper homogeneous metric space $M$ such that $(X,r_N^{-1}d)$ is isometric to $M$ for every $N$, then $M$ contains a bi-Lipschitz copy of $([0,1],|s-t|^\alpha)$.
\end{enumerate}
\end{lem}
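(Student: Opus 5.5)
For part (1), I will build an explicit map from $[0,1]$ into the cone out of the rescaled sequence. By \cref{p:distorted-normalizations}, condition (ii) gives constants $a,b>0$ and $D\ge0$ with
\[
a|n-m|^\alpha-D\le d(p_n,p_m)\le b|n-m|^\alpha+D
\]
for all $m,n\in\N$. For $t\in[0,1]$ I set $\phi_N(t)=p_{1+\lfloor tN\rfloor}$ and define $\Phi(t)=[\phi_N(t)]_\omega$. This point lies in the cone, because
\[
r_N^{-1}d(p_1,\phi_N(t))\le r_N^{-1}\bigl(b(tN)^\alpha+D\bigr)\le b+D/N^\alpha,
\]
which is bounded uniformly in $N$. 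For $s,t\in[0,1]$, the index gap is $k_N=|\lfloor tN\rfloor-\lfloor sN\rfloor|$, and it satisfies $\bigl|k_N-N|t-s|\bigr|\le1$. Dividing the two-sided estimate by $N^\alpha$ gives
\[
a\,\frac{k_N^\alpha}{N^\alpha}-\frac{D}{N^\alpha}\le r_N^{-1}d(\phi_N(s),\phi_N(t))\le b\,\frac{k_N^\alpha}{N^\alpha}+\frac{D}{N^\alpha}.
\]
Since $k_N/N\to|t-s|$ and $D/N^\alpha\to0$, taking $\omega$-limits yields
\[
a|t-s|^\alpha\le d_\omega(\Phi(s),\Phi(t))\le b|t-s|^\alpha.
\]
So $\Phi$ is a bi-Lipschitz embedding of $([0,1],|s-t|^\alpha)$. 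This step is routine. The only thing to watch is that the constants are uniform, and that is exactly why I use form (ii) rather than the large-gap form.

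For part (2), I will avoid ultralimits and use Arzelà--Ascoli style compactness directly inside $M$. Let $\iota_N:(X,r_N^{-1}d)\to M$ be the given isometries. Homogeneity lets me compose with an isometry of $M$, so I may assume $\iota_N(p_1)=o$ for a fixed $o\in M$. The maps $\psi_N=\iota_N\circ\phi_N$, restricted to the dyadic rationals $Q\subset[0,1]$, take values in the closed ball $\bar B_M(o,b+D)$, which is compact because $M$ is proper. A diagonal argument produces a subsequence $N_j$ along which $\psi_{N_j}(q)$ converges for every $q\in Q$. Call the limit $\Psi(q)$. By the same estimate as in part (1),
\[
a|q-q'|^\alpha\le d_M(\Psi(q),\Psi(q'))\le b|q-q'|^\alpha \qquad (q,q'\in Q).
\]
The upper bound makes $\Psi$ uniformly continuous on $Q$. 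Since $M$ is complete (being proper), $\Psi$ extends to $[0,1]$, and both inequalities pass to the extension by continuity. This gives the required bi-Lipschitz copy.

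I expect the main obstacle to be the bookkeeping in part (2): the isometries $\iota_N$ must be normalized so the images stay in one compact ball, and that is exactly where homogeneity and properness are used. I also need to check that the additive error $D/N^\alpha$ and the floor errors vanish uniformly on the countable dense set. Alternatively, I could deduce (2) from (1) by showing that the cone of $M$ with constant rescaling is isometric to $M$ for proper homogeneous $M$. However, the direct compactness argument above is simpler, so I would use it.
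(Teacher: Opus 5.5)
Your proof is correct.

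\textbf{Part (1).} This is essentially the paper's argument. The only difference is that you use the additive normalization (ii) of \cref{p:distorted-normalizations}, so that the error $D/N^\alpha\to0$ absorbs short gaps. The paper instead uses the large-gap form (\cref{d:distorted}) and observes that $|\lfloor Ns\rfloor-\lfloor Nt\rfloor|\ge N_0$ for all large $N$ once $s\ne t$. Either works.

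\textbf{Part (2).} Here you take a genuinely different route. The paper normalizes the isometries by homogeneity exactly as you do, so that each $(X,r_N^{-1}d,p_1)$ becomes the fixed pointed space $(M,o)$. It then reduces (2) to (1): the cone is the ultralimit of the constant pointed space $(M,o)$, and properness identifies this ultralimit canonically with $M$, since every bounded representing sequence has an $\omega$-limit in a compact ball. This is the alternative you mention at the end.

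You instead bypass ultrafilters entirely. You take a diagonal subsequence on the dyadic rationals and then extend the limit map by Hölder continuity and completeness.

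\textbf{Comparison.} Your route is more elementary and self-contained, and it shows explicitly where properness enters (compact balls and completeness) and where homogeneity enters (basepoint normalization). Its cost is the countable dense set and the extension step, because sequential compactness only handles countably many points at a time. The paper's $\omega$-limit handles all $t\in[0,1]$ simultaneously. It reuses part (1) verbatim, so part (2) costs only the standard identification of the cone of a constant proper pointed space with the space itself.
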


\begin{proof}
Let $A,B,N_0$ be distortion data, and let $B_0$ be the global upper constant from \eqref{e:fdist-upper-all}.  With constant basepoint $p_1$, form the ultralimit of $(X,N^{-\alpha}d,p_1)$. For $t\in[0,1]$, define
\[
        F(t)=\big[(p_{1+\lfloor Nt\rfloor})_{N\ge1}\big]_\omega.
\]
The global upper estimate keeps the representing sequence at uniformly bounded rescaled distance from $p_1$, so $F$ is well defined. If $s\ne t$, then $|\lfloor Ns\rfloor-\lfloor Nt\rfloor|\ge N_0$ for all sufficiently large $N$, and for those $N$,
\[
 A\left(\frac{|\lfloor Ns\rfloor-\lfloor Nt\rfloor|}{N}\right)^\alpha
 \le N^{-\alpha}d(p_{1+\lfloor Ns\rfloor},p_{1+\lfloor Nt\rfloor})
 \le B\left(\frac{|\lfloor Ns\rfloor-\lfloor Nt\rfloor|}{N}\right)^\alpha.
\]
Passing to the ultralimit, and treating $s=t$ trivially, gives
\[
        A|s-t|^\alpha\le d(F(s),F(t))\le B|s-t|^\alpha.
\]
Thus $F$ is the required embedding. For the final assertion, choose an isometry $j_N:(X,r_N^{-1}d)\to M$.  If $j_N(p_1)=q_N$, homogeneity of $M$ provides an isometry $h_N$ of $M$ with $h_N(q_N)=o$.  Hence $h_N\circ j_N$ identifies each pointed space $(X,r_N^{-1}d,p_1)$ with the fixed pointed model $(M,o)$. The cone is then the ultralimit of the constant pointed space $(M,o)$. By properness, every uniformly bounded representing sequence has an $\omega$-limit in a compact ball of $M$, so this ultralimit is canonically isometric to $M$.
\end{proof}

\subsection{Standard background results}\label{s:prelim-background}

The arguments use the following standard inputs.
\begin{enumerate}[label=(\roman*)]
\item Euclidean self-similar chord-arc curves of every Hausdorff dimension $s\in[1,k)$ are constructed, in the needed higher-dimensional form, in \cite[Section~5.2]{GhamsariHerron}; the parametrization used below follows from the characterization in \cite[Theorem~B]{GhamsariHerron}. Every subset of $\R^k$ is doubling, with doubling data depending only on $k$, and the Tukia--V\"ais\"al\"a characterization \cite[Theorem~4.9]{TukiaVaisala} identifies doubling bounded-turning arcs with quasisymmetric images of an interval. V\"ais\"al\"a's Euclidean dimension theorem \cite{Vaisala} then gives the strict dimension drop used at the endpoint. In the plane, Smirnov's sharp quasicircle estimate gives a related stronger result \cite{Smirnov}.
\item Asymptotic cones of geodesic $\delta$-hyperbolic spaces are $\R$-trees; see \cite{BH}. For the complete simply connected manifolds of curvature at most a negative constant that occur here, the relevant cones are universal homogeneous $\R$-trees; see \cite[Theorems~1.1.3 and~1.3.2]{DyubinaPolterovich}.
\item In the diagnostic examples we use that planar geodesic spaces have asymptotic dimension at most $2$, by J{\o}rgensen--Lang \cite[Theorem~2]{JorgensenLang}.
\end{enumerate}

\section{Euclidean spaces}\label{s:euclidean}

Ambient dimension governs the Euclidean case: packing excludes exponents below the critical value, bounded-turning dimension drop excludes the endpoint, and inversions of compact self-similar snowflake curves realize the larger exponents.

We use two standard facts about Euclidean quasiarcs. The existence statement comes from inverting a compact self-similar chord-arc Jordan curve at one point.

\begin{lem}[Euclidean quasiarc facts]\label{l:euclidean-quasiarc}
Let $k\ge 2$.
\begin{enumerate}[label=(\alph*)]
\item For every $1\le s<k$ there are a map $\gamma:\R\to\R^k$ and constants $a,b>0$ such that
\begin{equation}\label{e:euclidean-line}
        a|u-v|^{1/s}\le |\gamma(u)-\gamma(v)|\le b|u-v|^{1/s}
\end{equation}
for all $u,v\in\R$.
\item A bounded-turning arc in $\R^k$ has Hausdorff dimension strictly less than $k$.
\end{enumerate}
\end{lem}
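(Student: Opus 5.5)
Part (a) will come from a compact self-similar chord-arc snowflake curve, followed by inversion. Fix $1\le s<k$. Following \cite[Section~5.2]{GhamsariHerron}, take a compact self-similar Jordan curve $J\subset\R^k$ of Hausdorff dimension $s$ that is Ahlfors $s$-regular for $\mathcal H^s$ and bounded turning. The characterization \cite[Theorem~B]{GhamsariHerron} gives a parametrization $\phi:S^1_L\to J$ of the circle of length $L=\mathcal H^s(J)$ by $\mathcal H^s$-measure. For this parametrization,
\[
|\phi(x)-\phi(y)|\asymp d_{S^1_L}(x,y)^{1/s}.
\]
Heuristically, the subarc between $\phi(x)$ and $\phi(y)$ has measure comparable to $d(x,y)$. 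Ahlfors regularity together with bounded turning then forces its diameter, and hence the chord, to be comparable to $d(x,y)^{1/s}$.

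Next I invert at the point $z_0=\phi(0)$, using $\iota(z)=z_0+(z-z_0)/|z-z_0|^2$. This sends $J\setminus\{z_0\}$ to an unbounded arc. I then reparametrize by a map $\R\to S^1_L\setminus\{0\}$ of the form $u\mapsto$ (a point at arclength $\asymp|u|^{-1}$ from $0$ on the appropriate side), chosen so that the composite $\gamma$ is again parametrized by the measure $\mathcal H^s$ of the image arc. The key estimate is
\[
|\iota(z)-\iota(w)|=\frac{|z-w|}{|z-z_0||w-z_0|}.
\]
With $z=\phi(x)$, $w=\phi(y)$, and $x,y$ on the same side of $0$, set $u\asymp 1/x$ and $v\asymp1/y$. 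A direct computation then gives
\[
\frac{|x-y|^{1/s}}{|x|^{1/s}|y|^{1/s}}=\Bigl|\frac1x-\frac1y\Bigr|^{1/s}\asymp|u-v|^{1/s}.
\]
When $x$ and $y$ lie on opposite sides of $0$, the points are far apart through $\infty$, and a similar estimate holds using bounded turning near $z_0$. So that the power law holds exactly, I would parametrize by $u=$ signed $\pm x^{-1}$ once $|x|$ is small, and fix a bounded middle portion separately by compactness. \textbf{This bookkeeping is the main obstacle.} The cleanest route is to take as given that inversion preserves Ahlfors $s$-regularity and bounded turning, as in the standard quasisymmetric theory \cite{Heinonen}. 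Then the unbounded arc $\iota(J\setminus\{z_0\})$, parametrized by $\mathcal H^s$-measure from a base point, directly satisfies \eqref{e:euclidean-line} by the same measure--diameter argument as in the compact case.

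Part (b) is essentially the combination of known theorems listed in \cref{s:prelim-background}(i). Let $\Gamma\subset\R^k$ be a bounded-turning arc. It is doubling, with data depending only on $k$. By Tukia--V\"ais\"al\"a \cite[Theorem~4.9]{TukiaVaisala}, $\Gamma$ is a quasisymmetric image of $[0,1]$. V\"ais\"al\"a's dimension theorem \cite{Vaisala} says that a quasisymmetric image of an arc in $\R^k$, more precisely a porous or bounded-turning arc, cannot have full dimension $k$. The reason is that bounded turning forces the arc to be porous in $\R^k$: at every scale there is a ball of comparable size avoiding $\Gamma$. Porous sets in $\R^k$ have Hausdorff (indeed Assouad) dimension strictly less than $k$.

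If I had to verify the porosity directly, I would argue as follows. Fix $x\in\Gamma$ and $r$ at most the diameter, and let $x'$ be the point where a subarc leaves $B(x,r)$. Bounded turning with constant $C$ confines $\Gamma\cap B(x,r/C')$ to subarcs whose endpoints are near $x$. A volume-counting argument then shows that a definite fraction of $B(x,r)$ is missed, and hence $\dim\Gamma<k$.
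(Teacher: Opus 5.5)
Your plan is the paper's in both parts. For (a) you take a Ghamsari--Herron self-similar chord-arc curve with a parametrization $\eta$ satisfying $|\eta(z)-\eta(w)|\asymp|z-w|^{1/s}$ and invert at a point of the curve. For (b) you combine Tukia--V\"ais\"al\"a with V\"ais\"al\"a's dimension theorem; your porosity discussion only glosses the latter and is not needed.

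The step you flag as the main obstacle goes away with the paper's choice of coordinates.
\begin{itemize}
\item Work with the chordal metric on $S^1$. It is bi-Lipschitz to your arc-length metric, so your snowflake estimate transfers.
\item Take $z_0=(0,1)$ and parametrize $S^1\setminus\{z_0\}$ by $\sigma(t)=\bigl(\frac{2t}{1+t^2},\frac{t^2-1}{1+t^2}\bigr)$. Then for all $t,u\in\R$,
\[
\frac{|\sigma(t)-\sigma(u)|}{|\sigma(t)-z_0|\,|\sigma(u)-z_0|}=\frac{|t-u|}{2}.
\]
\item The snowflake estimate also applies with one endpoint equal to $z_0$. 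So it controls both the numerator and each denominator factor in the inversion identity $|I(x)-I(y)|=|x-y|/(|x|\,|y|)$.
\item This gives $|\gamma(t)-\gamma(u)|\asymp 2^{-1/s}|t-u|^{1/s}$ for all $t,u$, with no casework.
\end{itemize}
Your relation $|x-y|/(|x||y|)=|1/x-1/y|$ is the local arc-length version of this identity. It already handles pairs on opposite sides of $0$: there $d(x,y)=|x|+|y|=|x-y|$, so bounded turning is not needed. The map $\sigma$ is exactly the global coordinate that replaces your compactness patch for the middle portion.

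Your fallback route is valid but costs more than it appears.
\begin{itemize}
\item That inversion preserves Ahlfors $s$-regularity and bounded turning is a genuine lemma, which you neither prove nor cite precisely.
\item In the measure--diameter argument, the lower bound $\mathcal H^s(\Gamma'[x,y])\gtrsim|x-y|^s$ needs an extra step beyond your heuristic. For instance, take $m\in\Gamma'[x,y]$ with $|m-x|=|x-y|/2$. Bounded turning shows that $B(m,|x-y|/(2C))$ meets $\Gamma'$ only inside the subarc, and lower Ahlfors regularity then applies.
\end{itemize}
With the explicit $\sigma$, none of this is required.
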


\begin{proof}
For part (a), use a round circle when $s=1$.  When $1<s<k$, the construction in \cite[Section~5.2]{GhamsariHerron} gives a self-similar $s$-dimensional chord-arc Jordan curve $\Gamma\subset\R^k$; see also the self-similar framework of \cite{Hutchinson}.  The chord-arc parametrization characterization \cite[Theorem~B]{GhamsariHerron} gives, in either case, a homeomorphism $\eta:S^1\to\Gamma$ and constants $a_0,b_0>0$ such that
\begin{equation}\label{e:euclidean-circle-snowflake}
        a_0|z-w|^{1/s}\le |\eta(z)-\eta(w)|
        \le b_0|z-w|^{1/s}
        \qquad(z,w\in S^1),
\end{equation}
where $S^1$ has its Euclidean chordal metric.

Fix $z_0\in S^1$ and translate $\Gamma$ so that $\eta(z_0)=0$.  After rotating the circle, take $z_0=(0,1)$ and parametrize $S^1\setminus\{z_0\}$ by
\[
        \sigma(t)=\left(\frac{2t}{1+t^2},\frac{t^2-1}{1+t^2}\right),
        \qquad t\in\R.
\]
Then
\[
 |\sigma(t)-z_0|=\frac{2}{\sqrt{1+t^2}},
 \qquad
 |\sigma(t)-\sigma(u)|=
 \frac{2|t-u|}{\sqrt{1+t^2}\sqrt{1+u^2}}.
\]
Let $I(x)=x/|x|^2$ be inversion in the unit sphere and set
\[
        \gamma(t)=I(\eta(\sigma(t))).
\]
The inversion identity
\[
        |I(x)-I(y)|=\frac{|x-y|}{|x|\,|y|}
\]
together with \eqref{e:euclidean-circle-snowflake} and the two displayed formulas gives
\[
        |\gamma(t)-\gamma(u)|\asymp
        \left(
        \frac{|\sigma(t)-\sigma(u)|}
        {|\sigma(t)-z_0|\,|\sigma(u)-z_0|}
        \right)^{1/s}
        =2^{-1/s}|t-u|^{1/s}.
\]
This proves \eqref{e:euclidean-line}.

For part (b), every subset of $\R^k$ is doubling, so a bounded-turning Euclidean arc is a quasisymmetric image of an interval by the Tukia--V\"ais\"al\"a characterization \cite[Theorem~4.9]{TukiaVaisala}.  Restricting the parametrization to the open interval, V\"ais\"al\"a's Euclidean dimension theorem for quasisymmetric embeddings of Euclidean domains implies that the image, and hence the original arc after adding its two endpoints, has Hausdorff dimension strictly less than $k$ \cite{Vaisala}. In the planar case, Smirnov's quasicircle theorem gives a sharper related estimate \cite{Smirnov}.
\end{proof}

\begin{thm}[Power distortion in $\R^k$]\label{t:euclidean}
Let $k\ge 1$, let $I\in\{\N,\Z\}$, and let $0<\alpha\le 1$.
\begin{enumerate}[label=(\roman*)]
\item If $k=1$, then $\R$ contains an $\alpha$-power-distorted sequence indexed by $I$ if and only if $\alpha=1$.
\item If $k\ge 2$, then $\R^k$ contains an $\alpha$-power-distorted sequence indexed by $I$ if and only if
\[
        \alpha>\frac{1}{k}.
\]
\end{enumerate}
\end{thm}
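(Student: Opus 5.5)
We will prove the existence direction via \cref{l:euclidean-quasiarc}(a). The non-existence directions come from \cref{l:poly-pack} and from the compactness principle \cref{l:compactness} combined with \cref{l:euclidean-quasiarc}(b).

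\emph{Existence.} For $\alpha=1$ in any $\R^k$, take $p_n=n e_1$, which is trivially $1$-power-distorted for both index sets. For $k\ge2$ and $1/k<\alpha<1$, put $s=1/\alpha\in(1,k)$. \Cref{l:euclidean-quasiarc}(a) gives $\gamma:\R\to\R^k$ with $a|u-v|^{\alpha}\le|\gamma(u)-\gamma(v)|\le b|u-v|^\alpha$. Setting $p_n=\gamma(n)$ for $n\in I$ yields an $\alpha$-power-distorted sequence indexed by $\Z$, and by restriction one indexed by $\N$. This direction is immediate.

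\emph{Lower exponent bound.} $\R^k$ has polynomial packing exponent at most $k$: a volume comparison shows $\Pack_\varepsilon(B(x,R))\le C_\varepsilon(1+R)^k$. \Cref{l:poly-pack} therefore forces $\alpha\ge1/k$. For $k=1$ this already gives $\alpha=1$. Since a $\Z$-indexed sequence restricts to an $\N$-indexed one, it suffices to rule out $\N$-indexed sequences.

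\emph{Endpoint $\alpha=1/k$, $k\ge2$.} This is the main obstacle. Suppose $(p_n)_{n\in\N}$ is $1/k$-power-distorted. Euclidean space is a proper homogeneous space, and $(\R^k,r_N^{-1}|\cdot|)$ is isometric to $\R^k$ via dilation. \Cref{l:compactness}(2) therefore yields a bi-Lipschitz embedding $F:([0,1],|s-t|^{1/k})\to\R^k$. Its image $\Gamma$ is a Jordan arc. It is bounded turning: for a subarc $F([s,t])$ the diameter is at most $B|s-t|^{1/k}$, while $|F(s)-F(t)|\ge A|s-t|^{1/k}$, so the turning constant is $B/A$. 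On the other hand, $F$ is bi-Lipschitz from the snowflaked interval, and $([0,1],|s-t|^{1/k})$ has Hausdorff dimension $k$ with positive $\mathcal H^k$ measure, since it is Ahlfors $k$-regular with respect to Lebesgue measure. Hence $\dim_H\Gamma=k$. This contradicts \cref{l:euclidean-quasiarc}(b). The points needing care are verifying that the cone in \cref{l:compactness}(2) really is $\R^k$ and that Hausdorff dimension is preserved under bi-Lipschitz maps. Both are standard, so the weight of this step rests on the external dimension-drop theorem already packaged in \cref{l:euclidean-quasiarc}(b).

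Combining the two non-existence arguments: for $k=1$ only $\alpha=1$ survives, and for $k\ge2$ exactly the exponents $\alpha>1/k$ survive. This matches the existence constructions above.
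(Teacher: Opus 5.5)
Your proposal is correct and follows essentially the same route as the paper: \cref{l:poly-pack} rules out $\alpha<1/k$ (and forces $\alpha=1$ when $k=1$), \cref{l:compactness}(2) together with bounded turning and the dimension drop of \cref{l:euclidean-quasiarc}(b) excludes the endpoint $\alpha=1/k$, and \cref{l:euclidean-quasiarc}(a) gives existence. The only difference is cosmetic: you realize $\alpha=1$ with a straight line in every $\R^k$, whereas the paper does this only for $k=1$ and otherwise uses the $s=1$ case of the quasiarc lemma.
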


\begin{proof}
For every $\varepsilon>0$, Euclidean volume comparison gives
\[
        \Pack_\varepsilon(B_{\R^k}(x,R))
        \le C_{\varepsilon,k}(1+R)^k
        \qquad(x\in\R^k,\ R\ge0).
\]
Thus $\R^k$ has polynomial packing exponent at most $k$.  For $k=1$, \cref{l:poly-pack} gives $\alpha\ge 1$; since $\alpha\le 1$, we have $\alpha=1$, realized by $p_n=n$.

Now assume $k\ge2$.  If $\alpha<1/k$, \cref{l:poly-pack} again gives impossibility.

Suppose $\alpha=1/k$.  If an $I$-indexed sequence existed, restrict it to the positive indices when $I=\Z$.  Every rescaling of $\R^k$ is isometric to the fixed proper homogeneous space $\R^k$, so the second assertion of \cref{l:compactness} would give a bi-Lipschitz embedding
\[
        F:([0,1],|s-t|^{1/k})\longrightarrow \R^k.
\]
Its image $\Gamma=F([0,1])$ is an arc.  Moreover it has bounded turning: if $s<t$, then
\[
        \diam F([s,t])\le B(t-s)^{1/k}\le \frac{B}{A}|F(s)-F(t)|.
\]
Snowflaking the interval metric by the exponent $1/k$ multiplies its Hausdorff dimension by $k$, so
\[
        \dim_H([0,1],|s-t|^{1/k})=k.
\]
Bi-Lipschitz maps preserve Hausdorff dimension, and therefore $\dim_H\Gamma=k$.  This dimension equality contradicts \cref{l:euclidean-quasiarc}(b).  Hence $\alpha=1/k$ is impossible.

Finally, let $\alpha>1/k$.  Set $s=1/\alpha$.  Then $1\le s<k$.  By \cref{l:euclidean-quasiarc}(a), there is a map $\gamma:\R\to\R^k$ satisfying
\[
        |\gamma(u)-\gamma(v)|\asymp |u-v|^\alpha.
\]
Thus $p_n=\gamma(n)$ gives a bi-infinite sequence, and its restriction to $n\ge1$ gives a semi-infinite sequence.  This completes the proof.
\end{proof}

\section{Hyperbolic spaces, horocycles, and logarithmic rates}\label{s:hyperbolic}

Geodesic hyperbolicity strongly restricts power rates: in an asymptotic cone, the problem becomes one of embedding snowflaked intervals in an $\R$-tree. Packing instead controls logarithmic rates.

\begin{thm}[Power distortion in geodesic hyperbolic spaces]\label{t:delta-power}
Let $X$ be a geodesic $\delta$-hyperbolic metric space, let $I\in\{\N,\Z\}$, and let $0<\alpha\le1$. If $X$ contains an $\alpha$-power-distorted sequence indexed by $I$, then $\alpha=1$.
\end{thm}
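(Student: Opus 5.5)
The plan is to argue by contradiction through the compactness principle. Suppose $0<\alpha<1$ and $(p_n)_{n\in I}$ is $\alpha$-power-distorted in $X$. If $I=\Z$, first restrict to positive indices; the restriction is still $\alpha$-power-distorted. By \cref{l:compactness}(1), for any non-principal ultrafilter $\omega$ the cone $\operatorname{Cone}_\omega(X,p_1,(N^\alpha))$ contains a bi-Lipschitz copy $F([0,1])$ of $([0,1],|s-t|^\alpha)$, with constants $A,B$ coming from the distortion data. By the background fact (ii) in \cref{s:prelim-background}, this cone is an $\R$-tree $T$, since rescalings of a $\delta$-hyperbolic geodesic space are $(\delta/r_N)$-hyperbolic and the ultralimit is $0$-hyperbolic and geodesic.

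The contradiction comes from comparing two descriptions of the arc $\Gamma=F([0,1])$. In an $\R$-tree, any arc joining two points is the unique geodesic segment between them. Hence for $0\le s<t\le1$ the subarc $F([s,t])$ is exactly the geodesic $[F(s),F(t)]$. It therefore has length $d_T(F(s),F(t))\le B|t-s|^\alpha$, which is finite. On the other hand, the length of $F([s,t])$ is at least $\sum_i d_T(F(t_{i-1}),F(t_i))$ for any partition $s=t_0<\dots<t_m=t$. Using $m$ equal pieces, this sum is at least
\[
        mA\bigl((t-s)/m\bigr)^\alpha = A\,m^{1-\alpha}(t-s)^\alpha .
\]
Since $\alpha<1$, this tends to $\infty$ as $m\to\infty$. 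Thus $\Gamma$ is nonrectifiable, which contradicts its being a geodesic segment. So $\alpha=1$.

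The only step that needs care is justifying that $F([0,1])$ is an arc and that arcs in $\R$-trees are geodesics. For the first point, $F$ is a continuous injection from the compact set $[0,1]$ (in the snowflake metric, which induces the usual topology) into a Hausdorff space, so it is a topological embedding. The second point is part of the definition of an $\R$-tree given in \cref{d:hyp-cones}: the unique embedded arc between two points is a geodesic segment. The cone being an $\R$-tree for every scaling sequence and the constant basepoint $p_1$ is the cited standard fact, so no homogeneity, properness, or the second part of \cref{l:compactness} is needed.
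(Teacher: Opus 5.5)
Your proposal is correct and follows essentially the same route as the paper: restrict to positive indices, apply \cref{l:compactness} to get a bi-Lipschitz copy of $([0,1],|s-t|^\alpha)$ in an asymptotic cone, which is an $\R$-tree, and note that its image is the geodesic segment between its endpoints. The contradiction then comes from the partition bound $A N^{1-\alpha}\to\infty$; the paper phrases it through a monotone map $h:[0,1]\to[0,L]$ rather than through curve length, but this is the same argument.
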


\begin{proof}
Assume $0<\alpha<1$.  When $I=\Z$, restrict the given sequence to its positive-indexed subsequence.  By \cref{l:compactness}, an asymptotic cone of $X$ contains a bi-Lipschitz copy of $([0,1],|s-t|^\alpha)$.  Since $X$ is geodesic and $\delta$-hyperbolic, every asymptotic cone of $X$ is an $\R$-tree.

Let $F:([0,1],|s-t|^\alpha)\to T$ be such an embedding.  Because $F$ is a topological embedding and an $\R$-tree has a unique arc between any two points, $F([0,1])$ is the geodesic segment joining its endpoints.  Hence it is isometric to an interval $[0,L]$, and there is a map $h:[0,1]\to[0,L]$ satisfying
\[
        A|s-t|^\alpha\le |h(s)-h(t)|\le B|s-t|^\alpha.
\]
Since $h$ is continuous and injective, it is monotone.  For the partition $0,1/N,\ldots,1$,
\[
        L=\sum_{j=1}^N |h(j/N)-h((j-1)/N)|
        \ge A N\cdot N^{-\alpha}=A N^{1-\alpha},
\]
which tends to infinity because $\alpha<1$.  The contradiction proves the theorem.
\end{proof}

\begin{prop}[Linear distortion and the sequential boundary]\label{p:hyperbolic-boundary-linear}
Let $X$ be a geodesic $\delta$-hyperbolic metric space, and let $\partial X$ denote its sequential Gromov boundary.  Call a sequence indexed by $\N$ or $\Z$ a \emph{quasi-geodesic sequence} if it is $1$-power-distorted.
\begin{enumerate}
\item The space $X$ contains a semi-infinite quasi-geodesic sequence if and only if $\partial X\ne\varnothing$.
\item The space $X$ contains a bi-infinite quasi-geodesic sequence if and only if $|\partial X|\ge 2$.
\end{enumerate}
\end{prop}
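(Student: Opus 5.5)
The plan is to prove each direction separately. The forward directions are pure Gromov-product estimates, and the converse directions build sequences by sampling quasi-geodesics. Throughout, a quasi-geodesic sequence is one satisfying condition~\textup{(ii)} of \cref{p:distorted-normalizations} with $f(N)=N$:
\[
a|n-m|-C\le d(p_n,p_m)\le b|n-m|+C .
\]

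\emph{Forward directions.} Suppose first that $(p_n)_{n\in\N}$ is quasi-geodesic. Interpolate consecutive terms by geodesic segments; their lengths are bounded by \cref{l:bounded-increments}. The resulting piecewise-geodesic path is a quasi-geodesic ray. The Morse lemma in the geodesic $\delta$-hyperbolic space $X$ then puts it within uniform Hausdorff distance $H$ of a geodesic segment $[p_1,p_n]$ for every $n$. From this I will deduce $(p_m\mid p_n)_{p_1}\ge \min(d(p_1,p_m),d(p_1,p_n))-H'$, and this tends to infinity by the linear lower bound. So $(p_n)$ is a Gromov sequence and $\partial X\neq\varnothing$.

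In the bi-infinite case the two halves $(p_n)_{n\ge1}$ and $(p_{-n})_{n\ge1}$ give boundary points $\xi_\pm$. To separate them, apply the Morse lemma to the quasi-geodesic through $p_0$, which shows that $[p_{-n},p_m]$ passes within $H$ of $p_0$. Hence $(p_{-n}\mid p_m)_{p_0}\le H+2\delta$ stays bounded, so the sequences are inequivalent and $|\partial X|\ge 2$.

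\emph{Converse directions.} Given $\xi\in\partial X$, I want a quasi-geodesic ray from a basepoint $o$ converging to $\xi$. Since $X$ is not assumed proper, geodesic rays need not exist, so I use the standard construction of a $(1,20\delta)$-quasi-geodesic ray instead. Choose a Gromov sequence $x_k\to\xi$. Consider geodesics $[o,x_k]$ and pass to a subsequence with $(x_k\mid x_{k+1})_o\ge k$ increasing. The initial subsegments of length $\approx(x_k\mid x_{k+1})_o$ of $[o,x_k]$ and $[o,x_{k+1}]$ are $4\delta$-close by thinness of triangles. Concatenating these pieces, with short jumps at the junctions, gives a path $c:[0,\infty)\to X$ with $|s-t|-K\le d(c(s),c(t))\le|s-t|+K$. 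Then $p_n=c(n)$ is $1$-power-distorted.

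For two distinct points $\xi\ne\eta$, I would do the same from $o$ toward each point, obtaining rays $c_\xi$ and $c_\eta$. Set $p_n=c_\xi(n)$ for $n\ge0$ and $p_{-n}=c_\eta(n)$. The upper bound is the triangle inequality through $o$. For the lower bound, $\xi\neq\eta$ means $(c_\xi(s)\mid c_\eta(t))_o\le D$ for all $s,t$, with $D$ depending on $\xi,\eta$. This gives $d(p_n,p_{-m})\ge n+m-2D-2K$, which is quasi-linear in $n+m$.

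\emph{Main obstacle.} The hardest step is the careful construction of the quasi-geodesic ray toward $\xi$ without properness, in particular making the junction errors uniformly bounded rather than accumulating. I will handle it by the standard estimate: on a ray, points at parameters $s<t$ lying on the pieces for $x_i$ and $x_j$ are compared through the single geodesic $[o,x_j]$. This is possible because every earlier piece lies within $4\delta$ of $[o,x_j]$ on its initial segment. The Gromov product bound for distinct boundary points, uniform along the rays, is the other delicate point. It follows because the rays converge to their endpoints, together with the $\delta$-inequality for Gromov products.
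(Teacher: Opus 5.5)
Your argument is correct and has the same skeleton as the paper's proof: interpolate the sequence by geodesic segments to get a quasi-geodesic in one direction, and sample a quasi-geodesic ray in the other. The difference is that you prove by hand what the paper imports.

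The paper cites Kapovich--Benakli (Remark~2.16) for two things. First, for the existence of a quasi-geodesic ray to each boundary point and a quasi-geodesic line joining any two distinct ones. Second, for the fact that the ideal endpoints of the interpolated quasi-geodesic lie in the sequential boundary. Distinctness of the two ends in the bi-infinite case is justified there only by the remark that asymptotic tails would contradict the lower bound.

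You replace all of this with the Morse lemma and explicit Gromov-product estimates. The bound $(p_m\mid p_n)_{p_1}\ge d(p_1,p_m)-2H$ for $m\le n$ shows each tail is a Gromov sequence. The bound $(p_{-n}\mid p_m)_{p_0}\le d(p_0,[p_{-n},p_m])\le H$ proves inequivalence precisely; this holds in any geodesic space, so your extra $2\delta$ is unnecessary. For bi-infinite existence you glue two quasi-geodesic rays from a common basepoint instead of invoking a line between the two boundary points. That needs only a uniform bound on $(c_\xi(s)\mid c_\eta(t))_o$, which is correct: the $\delta$-inequality upgrades inequivalence of two Gromov sequences to $\limsup_{m,n\to\infty}(x_m\mid y_n)_o<\infty$. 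The paper's route is shorter; yours is self-contained, visibly needs no properness, and makes the two-ends step rigorous.

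One detail in your ray construction needs adjusting. Controlling only the consecutive products $(x_k\mid x_{k+1})_o$ does not justify the claim that every earlier piece lies within a fixed multiple of $\delta$ of $[o,x_j]$ on its initial segment. Chaining the four-point condition loses a multiple of $\delta$ at each step, which is exactly the accumulation you want to avoid.

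The fix is as follows.
\begin{enumerate}
\item Since $(x_m\mid x_n)_o\to\infty$, choose the subsequence so that $(x_i\mid x_j)_o\ge i$ for all $j\ge i$.
\item Let the $i$-th piece be $[o,x_i]$ restricted to the parameter interval $[i-1,i]$.
\item For $s\le i\le j$, the points of $[o,x_i]$ and $[o,x_j]$ at distance $s$ from $o$ are then uniformly close. The single comparison through $[o,x_j]$ gives $\bigl|d(c(s),c(t))-|s-t|\bigr|\le K$ with $K=O(\delta)$.
\item The same choice gives $(c_\xi(s)\mid x_j)_o\ge s-O(\delta)$ for all large $j$, which is exactly the input your cross-product bound requires.
\end{enumerate}
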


\begin{proof}
By \cite[Remark~2.16]{KapovichBenakli}, the sequential boundary agrees with the quasi-geodesic boundary: every boundary point is represented by a quasi-geodesic ray, and every two distinct boundary points are joined by a bi-infinite quasi-geodesic.  Sampling such a quasi-geodesic at integer parameters gives a $1$-power-distorted sequence, since the additive constants in the quasi-geodesic estimates can be absorbed once the index gap is sufficiently large.  This proves existence in both parts.

Conversely, suppose that $(p_n)$ is a quasi-geodesic sequence.  By \cref{d:distorted} and the global upper estimate \eqref{e:fdist-upper-all}, there are $a,L>0$ and $C\ge0$ such that
\[
        a|n-m|-C\le d(p_n,p_m)\le L|n-m|
\]
for all indices. Join each $p_n$ to $p_{n+1}$ by a geodesic segment and parametrize the resulting concatenated path by arclength.  Let
\[
        x\in[p_m,p_{m+1}],
        \qquad
        y\in[p_n,p_{n+1}],
        \qquad m\le n,
\]
and let $S$ be their distance along the concatenated path.  Since each segment has length at most $L$,
\[
        d_X(x,y)\ge a(n-m)-C-2L,
        \qquad
        S\le L(n-m+2).
\]
Consequently,
\[
        d_X(x,y)\ge \frac{a}{L}S-2a-C-2L.
\]
The lower estimate with one endpoint fixed also gives $d_X(p_m,p_n)\to\infty$ along each infinite tail.  Hence the accumulated arclength of the concatenation is unbounded in every relevant direction, so its arclength parametrization has domain a half-line when $I=\N$ and all of $\R$ when $I=\Z$.  Together with the trivial upper bound $d_X(x,y)\le S$, the displayed estimate shows that the concatenated path is a quasi-geodesic ray or line.  A bi-infinite quasi-geodesic has two distinct ideal endpoints: otherwise its two tails would be asymptotic, contradicting the quasi-geodesic lower bound for points with indices tending to opposite infinities.  Its endpoint, or its two endpoints in the bi-infinite case, belongs to the sequential boundary, again by \cite[Remark~2.16]{KapovichBenakli}.  This proves the converses.
\end{proof}

\begin{cor}[Power distortion in $\Hyp$]\label{c:hyp-power}
Let $I\in\{\N,\Z\}$ and let $0<\alpha\le1$. Then $\Hyp$ contains an $\alpha$-power-distorted sequence indexed by $I$ if and only if $\alpha=1$.
\end{cor}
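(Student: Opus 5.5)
The corollary follows directly from the two results just proved, so the plan is to combine \cref{t:delta-power} with \cref{p:hyperbolic-boundary-linear}.

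For necessity, note that $\Hyp$ is a proper geodesic metric space that is $\delta$-hyperbolic for a universal $\delta$ (for instance $\delta=\log(1+\sqrt2)$, or any standard thinness constant for triangles in $\Hyp$). Hence \cref{t:delta-power} applies: if $\Hyp$ contains an $\alpha$-power-distorted sequence indexed by $I$, then $\alpha=1$. The same conclusion can also be reached via the second assertion of \cref{l:compactness}. In that route one uses the Dyubina--Polterovich universal homogeneous $\R$-tree from \cref{s:prelim-background}(ii) as the cone, and then runs the partition argument from the proof of \cref{t:delta-power}. Citing the theorem directly is simpler.

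For sufficiency, one could invoke \cref{p:hyperbolic-boundary-linear}, since $\partial\Hyp\cong S^1$ has at least two points. It is cleaner, though, to exhibit a sequence explicitly. Fix a unit-speed geodesic line $c:\R\to\Hyp$, such as the imaginary axis $c(t)=(0,e^t)$ in the upper half-plane model, and put $p_n=c(n)$ for $n\in I$. Then
\[
d_{\Hyp}(p_n,p_m)=|n-m|,
\]
so \eqref{e:fdist} holds with $f(N)=N$, $A=B=1$ and $N_0=1$, for both $I=\N$ and $I=\Z$.

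There is no real obstacle here. The only point to check is that $\Hyp$ satisfies the hypotheses of \cref{t:delta-power}, namely that it is geodesic and $\delta$-hyperbolic, and this is standard.
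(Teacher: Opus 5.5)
Your proof is correct and matches the paper's: necessity is exactly \cref{t:delta-power} applied to the geodesic hyperbolic space $\Hyp$, and sufficiency samples a unit-speed geodesic line (or ray) at integer times, as you do with $c(t)=(0,e^t)$. One small correction to your side remark: the second assertion of \cref{l:compactness} does not apply to $\Hyp$, because its rescalings are not isometric to one fixed proper homogeneous space and $\mathcal U$ is not proper; the relevant input is the first assertion, which is what \cref{t:delta-power} uses, but your actual argument does not depend on that remark.
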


\begin{proof}
The negative part is \cref{t:delta-power}.  For $\N$, take $p_n=\gamma(n)$ on a unit-speed geodesic ray; for $\Z$, take a complete geodesic line.
\end{proof}

\subsection{Logarithmic distortion in the hyperbolic plane}\label{s:horocycles}

Use the upper half-plane model
\[
        \Hyp=\{(x,y):y>0\},\qquad ds^2=\frac{dx^2+dy^2}{y^2}.
\]

\begin{prop}[Horocycle example]\label{p:horocycle}
For each $I\in\{\N,\Z\}$, the sequence $p_n=(n,1)$, $n\in I$, satisfies
\[
        d_{\Hyp}(p_n,p_m)\asymp \ell(|n-m|)
        \qquad(m,n\in I,\ m\ne n).
\]
\end{prop}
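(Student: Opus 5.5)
The plan is to use the explicit distance formula in the upper half-plane,
\[
        \cosh d_{\Hyp}\bigl((x_1,y_1),(x_2,y_2)\bigr)=1+\frac{(x_1-x_2)^2+(y_1-y_2)^2}{2y_1y_2}.
\]
For $p_n=(n,1)$ and $p_m=(m,1)$ with $k=|n-m|\ge1$, this gives $\cosh d=1+k^2/2$. Equivalently, $d=2\arsinh(k/2)$, using the identity $\cosh(2u)=1+2\sinh^2u$. So the whole statement reduces to a one-variable comparison between $2\arsinh(k/2)$ and $\ell(k)=\log(1+k)$ for integers $k\ge1$.

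For that comparison, write $\arsinh(x)=\log\bigl(x+\sqrt{1+x^2}\bigr)$. For the upper bound, note that $x+\sqrt{1+x^2}\le 1+2x$ for $x\ge0$. With $x=k/2$ this gives $d\le 2\log(1+k)=2\ell(k)$. For the lower bound, $x+\sqrt{1+x^2}\ge 1+x$. Hence $d\ge 2\log(1+k/2)$, and since $1+k/2\ge(1+k)^{1/2}$ for $k\ge0$ (square both sides: $1+k+k^2/4\ge 1+k$), we get $d\ge\log(1+k)=\ell(k)$.

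Combining the two bounds gives $\ell(k)\le d_{\Hyp}(p_n,p_m)\le 2\ell(k)$ for all $m\ne n$, uniformly and with no threshold. The index set plays no role because the formula depends only on $|n-m|$, so the cases $I=\N$ and $I=\Z$ are handled simultaneously. The only point needing care is recalling the distance formula correctly; the rest is elementary, and I do not expect any real obstacle.
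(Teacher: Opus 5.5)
Your proposal is correct and follows the paper's proof essentially verbatim. Both use the half-plane distance formula to get $d_{\Hyp}(p_n,p_m)=2\arsinh(|n-m|/2)$, then the logarithmic form of $\arsinh$ to compare with $\ell(|n-m|)$. Your explicit bounds $\ell(k)\le d_{\Hyp}(p_n,p_m)\le 2\ell(k)$ simply make the paper's ``uniformly comparable'' step quantitative.
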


\begin{proof}
For points $(x,1),(x',1)$ in the upper half-plane,
\[
        \cosh d_{\Hyp}((x,1),(x',1))=1+\frac{(x-x')^2}{2}.
\]
Equivalently,
\[
        d_{\Hyp}((x,1),(x',1))=2\arsinh \frac{|x-x'|}{2}.
\]
Since
\[
        \arsinh u=\log(u+\sqrt{1+u^2}),
\]
this is uniformly comparable to $\log(1+|x-x'|)$ for $|x-x'|\ge 0$.
\end{proof}

\begin{cor}[No sublogarithmic distortion under exponential packing]\label{c:no-sublog-exp}
Let $X$ be a metric space satisfying the exponential packing bound \eqref{e:exp-pack}, let $I\in\{\N,\Z\}$, and let $f:\N\to(0,\infty)$ be nondecreasing with
\[
        f(N)=o(\ell(N)).
\]
Then $X$ contains no $f$-distorted sequence indexed by $I$.
\end{cor}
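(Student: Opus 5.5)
This is a direct consequence of \cref{l:exp-pack}, so the plan is to apply that lemma and compare the resulting lower bound with the hypothesis $f=o(\ell)$.

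Suppose, for contradiction, that $(p_n)_{n\in I}$ is $f$-distorted in $X$. Since $f$ is nondecreasing and $X$ satisfies \eqref{e:exp-pack}, \cref{l:exp-pack} provides constants $c,C>0$ and $N_0\ge2$ such that
\[
        f(N)\ge c\log N-C
        \qquad(N\ge N_0).
\]
We do not need to reprove this bound. Its content is that the $q+1\asymp N$ points $p_1,p_{1+N_*},\ldots,p_{1+qN_*}$ are $Af(N_*)$-separated and lie in the ball $B_X(p_1,Bf(N))$. The exponential packing bound then forces $N\lesssim e^{hBf(N)}$.

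Next we compare $\log N$ with $\ell(N)=\log(1+N)$. For $N\ge2$ we have $\log N\ge\tfrac12\log(1+N)$, because $N^2\ge N+1$. Hence, for $N\ge N_0$,
\[
        \frac{f(N)}{\ell(N)}\ge \frac{c}{2}-\frac{C}{\ell(N)}.
\]
Since $\ell(N)\to\infty$, the right-hand side is at least $c/4$ for all sufficiently large $N$.

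This contradicts $f(N)=o(\ell(N))$, which requires $f(N)/\ell(N)\to0$. So no $f$-distorted sequence exists, for either index set $I$.

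No step here is a genuine obstacle. The one point to check is that \cref{l:exp-pack} assumes only that $f$ is nondecreasing, not that it is a rate function. That matches the hypothesis of the corollary, so the lemma applies directly.
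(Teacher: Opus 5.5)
Your proposal is correct and follows the paper's proof: you invoke \cref{l:exp-pack} to get $f(N)\ge c\log N-C$ for large $N$, then use $\ell(N)\asymp\log N$ to contradict $f(N)=o(\ell(N))$. Your explicit comparison $\log N\ge\tfrac12\log(1+N)$ for $N\ge2$ just writes out the step the paper leaves as $\ell(N)\asymp\log N$.
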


\begin{proof}
By \cref{l:exp-pack}, every such distorted sequence would satisfy $f(N)\ge c\log N-C$ for all sufficiently large $N$.  Since $\ell(N)=\log(1+N)\asymp\log N$, this contradicts $f(N)=o(\ell(N))$.
\end{proof}

\begin{cor}[Finitely generated groups and word-hyperbolic groups]\label{c:fg-no-sublog}
Let $G$ be a finitely generated group with a word metric, let $I\in\{\N,\Z\}$, and let $f:\N\to(0,\infty)$ be nondecreasing with
\[
        f(N)=o(\ell(N)).
\]
Then $G$ contains no $f$-distorted sequence indexed by $I$.
\end{cor}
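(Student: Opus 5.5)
The plan is to reduce to \cref{c:no-sublog-exp}: it suffices to show that a finitely generated group with a word metric satisfies the exponential packing bound \eqref{e:exp-pack}. The hypotheses on $f$ (nondecreasing, $f(N)=o(\ell(N))$) and on $I$ are exactly those of that corollary, so no further reduction is needed.

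First fix a finite generating set $S$ and the word metric $d_S$ on $G$; since $d_S$ takes integer values, any $\varepsilon$-separated subset of a ball $B_G(x,R)$ is in particular a subset of that ball. Hence
\[
        \Pack_\varepsilon(B_G(x,R))\le |B_G(x,R)|=|B_G(1,R)|\le (2|S|+1)^{R+1}
\]
for every $x\in G$, $R\ge0$ and $\varepsilon>0$, using left-invariance of $d_S$ to move the center to the identity, and counting words of length at most $\lfloor R\rfloor$ in $S\cup S^{-1}$ together with the empty word. This gives \eqref{e:exp-pack} with $C_\varepsilon=2|S|+1$ and $h_\varepsilon=\log(2|S|+1)$, independent of $\varepsilon$.

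Then \cref{c:no-sublog-exp} applies directly and yields the conclusion. (Alternatively, one could note that $G$ is quasi-isometric to its Cayley graph, which is a geodesic space of bounded geometry, and combine \cref{l:bg-exp-pack} with \cref{l:qi-invariance}; but \cref{l:qi-invariance} is stated for rate functions, while here $f$ is only nondecreasing, so the direct packing count on $G$ itself is cleaner and avoids that issue.) There is no real obstacle. The only point to check is that \cref{c:no-sublog-exp} and \cref{l:exp-pack} require only a metric space with \eqref{e:exp-pack}, not a geodesic one, which holds as stated. A word metric on $G$ is not geodesic, so this is what allows the argument to be applied to $G$ directly.
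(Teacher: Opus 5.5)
Your proposal is correct and follows the paper's route: the paper also notes that the Cayley graph has bounded valence, so balls grow at most exponentially and the word metric itself satisfies \eqref{e:exp-pack}. It then applies the exponential packing obstruction \cref{l:exp-pack}, which is what \cref{c:no-sublog-exp} packages; your explicit count $|B_G(1,R)|\le(2|S|+1)^{R+1}$ and your remark that no geodesic hypothesis is needed simply spell that step out.
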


In particular, \cref{c:fg-no-sublog} applies to every finitely generated word-hyperbolic group.

\begin{proof}
A Cayley graph of a finitely generated group has bounded valence, so its vertex balls grow at most exponentially and its word metric satisfies \eqref{e:exp-pack}.  Apply \cref{l:exp-pack}; hyperbolicity is not needed.
\end{proof}

\begin{cor}[Sharpness in $\Hyp$]\label{c:log-sharp-hyp}
Let $I\in\{\N,\Z\}$.
\begin{enumerate}[label=\textup{(\roman*)}]
\item The space $\Hyp$ contains an $\ell$-distorted sequence indexed by $I$.
\item If $f:\N\to(0,\infty)$ is nondecreasing and satisfies $f(N)=o(\ell(N))$, then $\Hyp$ contains no $f$-distorted sequence indexed by $I$.
\end{enumerate}
\end{cor}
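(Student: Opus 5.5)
Part~(i) follows from \cref{p:horocycle}, which gives a sequence $p_n=(n,1)$ with $d_{\Hyp}(p_n,p_m)\asymp\ell(|n-m|)$ for all $m\ne n$. This two-sided comparison holds for every pair of distinct indices, so in particular for all gaps $|n-m|\ge 1$. Thus \eqref{e:fdist} holds with $f=\ell$ and threshold $N_0=1$, and $(p_n)$ is $\ell$-distorted. The same formula covers both $I=\N$ and $I=\Z$. No normalization from \cref{p:distorted-normalizations} is needed, since the estimate is already multiplicative at all gaps.

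For part~(ii), the plan is to reduce to \cref{c:no-sublog-exp}. This requires checking that $\Hyp$ satisfies the exponential packing bound \eqref{e:exp-pack}. I would argue by volume.

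\begin{itemize}
\item The hyperbolic area of a ball of radius $r$ is $2\pi(\cosh r-1)$, independent of the center, by homogeneity.
\item Given an $\varepsilon$-separated subset $S$ of $B(x,R)$, the balls $B(s,\varepsilon/2)$ for $s\in S$ are pairwise disjoint and contained in $B(x,R+\varepsilon/2)$.
\item Comparing areas gives
\[
        |S|\le \frac{\cosh(R+\varepsilon/2)-1}{\cosh(\varepsilon/2)-1}\le C_\varepsilon e^{R}.
\]
\end{itemize}

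This is \eqref{e:exp-pack} with $h_\varepsilon=1$. Alternatively, one could note that $\Hyp$ has bounded geometry, again by homogeneity and properness, and invoke \cref{l:bg-exp-pack}. The direct volume argument is cleaner, so I would use it.

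With the packing bound in hand, \cref{c:no-sublog-exp} applies directly to any nondecreasing $f$ with $f=o(\ell)$ and yields the nonexistence claim.

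The only step requiring any care is the packing estimate, and it is routine. I expect no real obstacle: the corollary is a direct assembly of \cref{p:horocycle} and \cref{c:no-sublog-exp}.
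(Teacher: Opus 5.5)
Your proposal is correct and follows the paper's proof essentially verbatim. Part~(i) comes from \cref{p:horocycle}. Part~(ii) uses the area formula $2\pi(\cosh R-1)$ and disjoint $\varepsilon/2$-balls to get the exponential packing bound, and then applies \cref{c:no-sublog-exp}.
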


\begin{proof}
Hyperbolic balls have area
\[
        \Vol(B_{\Hyp}(R))=2\pi(\cosh R-1)\asymp e^R.
\]
For fixed $\varepsilon>0$, the $\varepsilon/2$-balls centered at an $\varepsilon$-separated subset of $B_{\Hyp}(R)$ are disjoint and lie in $B_{\Hyp}(R+\varepsilon/2)$. The area formula therefore gives an exponential packing bound for $\Hyp$.  Apply \cref{c:no-sublog-exp} for the negative statement and \cref{p:horocycle} for the positive statement.
\end{proof}

\subsection{Arbitrary \texorpdfstring{$\delta$}{delta}-hyperbolic spaces and packing}\label{s:arbitrary-hyp}

\begin{cor}\label{c:hyperbolic-exp-packing}
Let $X$ be a geodesic $\delta$-hyperbolic space satisfying an exponential packing bound, let $I\in\{\N,\Z\}$, and let $f:\N\to(0,\infty)$ be nondecreasing with $f(N)=o(\ell(N))$. Then $X$ contains no $f$-distorted sequence indexed by $I$.
\end{cor}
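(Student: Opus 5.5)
This corollary is a direct specialization of \cref{c:no-sublog-exp}, so the plan is simply to check that its hypotheses are met and then invoke it. The space $X$ is assumed to satisfy the exponential packing bound \eqref{e:exp-pack}. The index set $I\in\{\N,\Z\}$ and the nondecreasing function $f$ with $f(N)=o(\ell(N))$ are exactly as required there. Geodesicity and $\delta$-hyperbolicity play no role in the argument; they only record the setting in which the corollary is of interest.

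Concretely, I would suppose for contradiction that $(p_n)_{n\in I}$ is $f$-distorted in $X$, with data $A,B,N_*$. As in the proof of \cref{l:exp-pack}, fix $\varepsilon=Af(N_*)$. The points $p_1,p_{1+N_*},\dots,p_{1+qN_*}$, with $q=\lfloor N/N_*\rfloor$, are $\varepsilon$-separated by monotonicity of $f$, and they lie in $B_X(p_1,Bf(N))$. The packing bound then gives $q+1\le C_\varepsilon e^{h_\varepsilon Bf(N)}$, hence $f(N)\ge c\log N-C$ for large $N$. Since $\ell(N)\asymp\log N$, this contradicts $f(N)=o(\ell(N))$.

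There is no genuine obstacle here. The one point to watch is that the packing hypothesis must hold at the specific scale $\varepsilon=Af(N_*)$, which depends on the sequence. That is why \eqref{e:exp-pack} is required for every $\varepsilon>0$, and it is already built into the hypothesis.

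It may also be worth a remark that by \cref{l:bg-exp-pack} the hypothesis holds whenever $X$ has bounded geometry, for instance for Cayley graphs of hyperbolic groups. The hypothesis cannot simply be dropped: \cref{mt:logarithmic}(3) exhibits a CAT$(-1)$ surface of unbounded geometry that realizes a log--log rate.
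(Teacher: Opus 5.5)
Your proposal is correct and matches the paper's proof, which likewise notes that geodesicity and hyperbolicity play no role and simply applies the exponential packing obstruction \cref{l:exp-pack} (equivalently \cref{c:no-sublog-exp}). Your packing count at the fixed scale $\varepsilon=Af(N_*)$ is exactly the argument of that lemma.
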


Examples covered by \cref{c:hyperbolic-exp-packing} include bounded-degree hyperbolic graphs, Cayley graphs of finitely generated word-hyperbolic groups, and universal covers of compact negatively curved manifolds.

\begin{proof}
Only the packing estimate is used; apply \cref{l:exp-pack}.
\end{proof}

The packing hypothesis cannot be removed.

\begin{ex}[A CAT$(-1)$ surface with log--log horocycles]\label{ex:loglog}
Let $X=\R^2$ with coordinates $(x,t)$ and metric
\[
        ds^2=dt^2+e^{-2\Phi(t)}dx^2,
        \qquad \Phi(t)=t+e^t.
\]
Then:
\begin{enumerate}[label=\textup{(\roman*)}]
\item $X$ is complete and simply connected, and its Gaussian curvature satisfies
\[
        K(t)=\Phi''(t)-(\Phi'(t))^2=e^t-(1+e^t)^2\le -1.
\]
Thus $X$ is CAT$(-1)$, hence $\delta$-hyperbolic.
\item For each $I\in\{\N,\Z\}$, the sequence $p_n=(n,0)$, $n\in I$, satisfies
\[
        d_X(p_n,p_m)\asymp \log\log(e^e+|n-m|)
        \qquad(m,n\in I,\ m\ne n).
\]
\end{enumerate}
\end{ex}

\begin{proof}
A finite-length curve has bounded $t$-coordinate because its total $t$-variation is at most its length.  If $t$ stays in a compact interval $[-T,T]$, put
\[
        c_T=\min_{[-T,T]}e^{-\Phi(t)}>0.
\]
For every absolutely continuous curve $\sigma(s)=(x(s),t(s))$ in this strip,
\[
        \operatorname{length}(\sigma)
        \ge \int e^{-\Phi(t(s))}|x'(s)|\,ds
        \ge c_T\int |x'(s)|\,ds
        \ge c_T|\Delta x|.
\]
Thus $|x|\to\infty$ also forces infinite length.  The divergent-path criterion gives completeness, and Hopf--Rinow gives properness.  For a warped product metric $dt^2+a(t)^2dx^2$ with $a(t)=e^{-\Phi(t)}$, the curvature is
\[
        K=-\frac{a''(t)}{a(t)}=\Phi''(t)-(\Phi'(t))^2.
\]
Thus $K\le -1$, and the Cartan--Hadamard theorem gives CAT$(-1)$.  The same estimate verifies the curvature hypothesis used later when applying the asymptotic-cone theorem of Dyubina--Polterovich \cite[Theorem~1.3.2]{DyubinaPolterovich} to this surface.

Since the metric coefficients are independent of $x$, horizontal translations in $x$ are isometries, so it suffices to estimate
\[
        D(N)=d_X((0,0),(N,0)).
\]
For sufficiently large $N$, choose the unique $T\ge 0$ satisfying $\Phi(T)=\log N$.  The path going vertically from $t=0$ to $t=T$, horizontally from $x=0$ to $x=N$ at height $T$, and vertically back to $t=0$ has length
\[
        2T+N e^{-\Phi(T)}=2T+1.
\]
Since $T+e^T=\log N$, one has $T=\log\log N+O(1)$: indeed, $e^T\le\log N$ gives $T\le\log\log N$, while $T\le e^T$ for $T\ge0$ gives $\log N\le2e^T$ and hence $T\ge\log\log N-\log 2$.  Thus $D(N)\lesssim \log\log(e^e+N)$, after adjusting constants for bounded $N$.

For the lower bound, let $\sigma$ be any path from $(0,0)$ to $(N,0)$ of length $L$, and let $T$ be the maximum $t$-coordinate reached by $\sigma$.  Since both endpoints have $t$-coordinate $0$, reaching height $T$ and returning requires total variation at least $2T$ in the $t$-coordinate.  Hence $T\le L/2$.  Since $\Phi'(t)=1+e^t>0$ for all $t\in\R$, and every point of $\sigma$ has $t$-coordinate at most $T$, the horizontal displacement satisfies
\[
        N\le e^{\Phi(T)}L\le e^{\Phi(L/2)}L.
\]
Taking logarithms,
\[
        \log N\le \log L+\Phi(L/2)
        =\log L+\frac{L}{2}+e^{L/2}.
\]
For $L\ge 2$, the right-hand side is at most $C e^{L/2}$ for an absolute constant $C$.  Therefore $\log\log N\le L/2+\log C$, and hence
\[
        L\ge 2\log\log N-C'
\]
whenever $N$ is sufficiently large.  Adjusting constants for bounded $N$ gives $D(N)\gtrsim \log\log(e^e+N)$.
\end{proof}

\begin{rem}
Thus $\delta$-hyperbolicity rules out sublinear power rates because asymptotic cones are trees, but it does not itself give a logarithmic lower bound for general rates; that bound comes from packing.
\end{rem}

\section{Simplicial trees}\label{s:simplicial-trees}

Throughout this section, every simplicial tree has the path metric with unit-length edges.  No local finiteness or degree bound is assumed unless explicitly stated.

\begin{thm}[Unbounded sequence distortion in simplicial trees is linear]\label{t:simplicial-trees}
Let $T$ be a simplicial tree, let $I\in\{\N,\Z\}$, and let $f:\N\to(0,\infty)$ be nondecreasing and unbounded. Suppose that $(p_n)_{n\in I}$ is $f$-distorted in $T$. Then:
\begin{enumerate}[label=\textup{(\roman*)}]
\item the map $n\mapsto p_n$ is a quasi-isometric embedding of $I$ into $T$; more precisely, there are constants $c,L>0$ and $C\ge 0$ such that
\[
        c|n-m|-C\le d_T(p_n,p_m)\le L|n-m|
        \qquad(m,n\in I);
\]
\item one has
\[
        f(N)\asymp N.
\]
\end{enumerate}
\end{thm}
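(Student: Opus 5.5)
The idea is to show that an $f$-distorted sequence in a tree cannot backtrack by more than a bounded amount, and then to sum along a sparse arithmetic subsequence. The upper bound $d_T(p_n,p_m)\le L|n-m|$ in (i) is already \cref{l:bounded-increments}, with increment bound $L$. Part~(ii) follows from (i) and the distortion estimate. For large $N$ we have $Af(N)\le d_T(p_1,p_{1+N})\le LN$, and $d_T(p_1,p_{1+N})\le Bf(N)$, so $Bf(N)\ge cN-C$. Since $f$ is unbounded this gives $f(N)\asymp N$. So the real content is the linear lower bound in (i).

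\textbf{Step 1: a uniform bound on Gromov products along the sequence.} I would prove that there is $D\ge0$ with
\[
(p_i\mid p_k)_{p_j}\le D \qquad (i<j<k \text{ in } I).
\]
Let $c$ be the center of the tripod spanned by $p_i,p_j,p_k$, and let $t=d_T(p_j,c)=(p_i\mid p_k)_{p_j}$.

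Join consecutive terms by geodesics; each has length at most $L$. In a tree, the concatenated path from $p_i$ to $p_j$ must pass through every point of $[p_i,p_j]$, and in particular through $c$. Hence some index $a$ with $i\le a\le j$ has $d_T(p_a,c)\le L$. Likewise some $b$ with $j\le b\le k$ has $d_T(p_b,c)\le L$. Then $d_T(p_a,p_b)\le 2L$.

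By the distortion lower bound, if $b-a\ge N_0$ then $Af(b-a)\le 2L$. Because $f$ is nondecreasing and unbounded, this forces $b-a<M$ for a constant $M$ depending only on $A,L,N_0,f$. Therefore $j-a<M$, and so
\[
t\le d_T(p_j,p_a)+L\le LM+L=:D.
\]
This step relies on unboundedness of $f$ and on the tree property that paths pass through geodesics. It is the heart of the argument, and it needs no local finiteness.

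\textbf{Step 2: local-to-global along a sparse subsequence.} Fix an integer $K\ge N_0$ so large that $Af(K)>4D+1$, and set $q_j=p_{Kj}$. Consecutive distances then satisfy $\ell_j:=d_T(q_j,q_{j+1})\ge Af(K)>4D$.

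Step~1 applies to every triple of the form (earlier point of the subsequence, $q_j$, later point). In a tree I would show by induction on $n$ that
\[
d_T(q_0,q_n)\ge \sum_{j=0}^{n-1}(\ell_j-2D).
\]
The inductive step uses the identity
\[
d(q_0,q_{n+1})=d(q_0,q_n)+\ell_n-2(q_0\mid q_{n+1})_{q_n}
\]
together with $(q_0\mid q_{n+1})_{q_n}\le D$, which is exactly Step~1 applied to indices $0<Kn<K(n+1)$. Since $\ell_j-2D\ge Af(K)/2$, this gives $d_T(q_i,q_j)\ge \tfrac{A}{2}f(K)\,|i-j|$.

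For general $m,n$, I pass to the nearest multiples of $K$. This costs at most $2LK$ by bounded increments, and gives $d_T(p_m,p_n)\ge c|n-m|-C$.

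\textbf{Main obstacle.} I expect Step~1 to be the main obstacle: it has to rule out the sequence wandering into a side branch and coming back. The possible pitfall is justifying carefully that some $p_a$ with $a\le j$ is within $L$ of $c$. I would do this by noting that $c\in[p_i,p_j]$, and that the union of the segments $[p_a,p_{a+1}]$ for $i\le a<j$ is a connected set containing $p_i$ and $p_j$. In a tree such a set contains $[p_i,p_j]$, so $c$ lies on some $[p_a,p_{a+1}]$ and is within $L$ of $p_a$. If needed, I would replace $p_a$ by the endpoint of that segment nearer to $c$.
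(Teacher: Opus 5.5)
Your proof is correct. Step 1 is the paper's bounded-backtracking estimate, stated with Gromov products; in a tree $(p_i\mid p_k)_{p_j}=d_T(p_j,[p_i,p_k])$. You prove it by the same crossing argument. Working with the tripod center directly, instead of the paper's midpoint $y$ of $[b,p_j]$ and its two one-sided length estimates, is a small but genuine simplification.

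The real difference is Step 2. The paper turns bounded backtracking into a linear lower bound by counting. All of $p_i,\dots,p_k$ lie in the $D$-neighborhood of $[p_i,p_k]$, and this neighborhood is covered by about $d_T(p_i,p_k)+2$ balls of radius $D+1$. Choosing $Af(M_1)>2D+2$, the distortion lower bound lets each ball hold at most $M_1$ terms. This gives $k-i+1\le M_1\bigl(d_T(p_i,p_k)+2\bigr)$ for every pair of indices at once.

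You instead pass to the sparse subsequence $q_j=p_{Kj}$, whose consecutive distances exceed $4D$, and telescope the identity $d(q_0,q_{n+1})=d(q_0,q_n)+\ell_n-2(q_0\mid q_{n+1})_{q_n}$. Once Step 1 is known this is purely metric, and it gives the explicit constant $c=Af(K)/(2K)$. The price is the extra rounding to multiples of $K$, at additive cost $2LK$, to reach arbitrary $m,n$.

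The paper's route avoids the subsequence and the rounding. Yours makes the additivity mechanism transparent. In both arguments the tree structure enters only through Step 1.

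One cosmetic point: for $I=\N$, with the paper's convention $\N=\{1,2,\dots\}$, start the induction at $q_1$ rather than $q_0$.
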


\begin{proof}
First consider $I=\N$. Let $A,B,N_0$ be distortion data, and let $L>0$ be the bounded-increment constant supplied by \cref{l:bounded-increments}.  Thus consecutive points satisfy $d_T(p_n,p_{n+1})\le L$, and the triangle inequality already gives
\[
        d_T(p_i,p_k)\le L(k-i)\qquad(i<k).
\]
For the linear lower bound, first prove bounded backtracking.  Since $f$ is nondecreasing and unbounded, choose an integer $M_0\ge N_0$ such that
\[
        A f(M_0)>2L.
\]
We claim that for all $i<j<k$,
\begin{equation}\label{e:tree-bounded-backtracking}
        d_T(p_j,[p_i,p_k])\le D:=L(M_0+2),
\end{equation}
where $[p_i,p_k]$ denotes the geodesic segment from $p_i$ to $p_k$.

Indeed, let $b$ be the projection of $p_j$ to $[p_i,p_k]$ and set $h=d_T(p_j,b)$.  If $h=0$ there is nothing to prove.  Otherwise the geodesics $[p_i,p_j]$ and $[p_j,p_k]$ both contain the segment $[b,p_j]$.  Let $y\in[b,p_j]$ satisfy $d_T(y,p_j)=h/2$.  Join each consecutive pair $p_r,p_{r+1}$ by the geodesic segment between them.  Since deleting $y$ separates $p_j$ from both $p_i$ and $p_k$, the resulting path from $p_i$ to $p_j$ must cross $y$, and the path from $p_j$ to $p_k$ must also cross $y$.  Hence there are indices $r\in\{i,\ldots,j-1\}$ and $s\in\{j,\ldots,k-1\}$ such that $y\in[p_r,p_{r+1}]$ and $y\in[p_s,p_{s+1}]$.  Therefore
\[
        d_T(p_r,p_s)\le 2L.
\]
By the lower distortion estimate and the choice of $M_0$, this implies $s-r<M_0$.  The part of the discrete path from $p_r$ to $p_j$ has length at most $L(j-r)$ but must move from within distance $L$ of $y$ to $p_j$, so
\[
        L(j-r)\ge h/2-L.
\]
Similarly $L(s-j)\ge h/2-L$.  Hence
\[
        L(s-r)\ge h-2L.
\]
Since $s-r<M_0$, we get $h\le L(M_0+2)=D$.  This proves \eqref{e:tree-bounded-backtracking}.

Now fix $i<k$.  By \eqref{e:tree-bounded-backtracking}, every intermediate point $p_j$, $i\le j\le k$, lies in the $D$-neighborhood of the geodesic segment $[p_i,p_k]$.  Choose an integer $M_1\ge N_0$ such that
\[
        A f(M_1)>2D+2.
\]
Choose at most $d_T(p_i,p_k)+2$ unit-spaced centers covering $[p_i,p_k]$.  Project each $p_j$, $i\le j\le k$, to $[p_i,p_k]$ and assign it to a covering center within distance $1$ of that projection.  By \eqref{e:tree-bounded-backtracking}, an assigned point lies in the ball of radius $D+1$ about its center.  Any one of these balls contains at most $M_1$ of the points $p_i,p_{i+1},\ldots,p_k$.  Indeed, if it contained $M_1+1$ selected points, the first and last corresponding indices would differ by at least $M_1$, while their points would be at distance at most $2D+2$, contradicting the lower distortion estimate.  Therefore
\[
        k-i+1\le M_1(d_T(p_i,p_k)+2).
\]
Equivalently,
\[
        d_T(p_i,p_k)\ge M_1^{-1}(k-i+1)-2.
\]
This is the required quasi-geodesic lower bound.

Finally, compare $f$ with the linear rate.  The adjacent-step bound gives
\[
        d_T(p_1,p_{1+N})\le LN.
\]
For $N\ge N_0$, the lower distortion inequality therefore gives $f(N)\le (L/A)N$.  Conversely, for all sufficiently large $N$, the quasi-geodesic lower bound and the upper distortion inequality give
\[
        B f(N)\ge d_T(p_1,p_{1+N})\ge cN-C,
\]
so $f(N)\ge c'N-C'$ for all sufficiently large $N$.  Thus $f(N)\asymp N$.  The proof for $\Z$ is identical after restricting to a finite interval $i<j<k$ of indices.
\end{proof}

\begin{cor}[Power distortion in simplicial trees]\label{c:tree-powers}
Let $T$ be a simplicial tree with all edges of length $1$.
\begin{enumerate}[label=(\roman*)]
\item If $0<\alpha<1$ and $I\in\{\N,\Z\}$, then $T$ contains no $\alpha$-power-distorted sequence indexed by $I$.
\item The tree $T$ contains a $1$-power-distorted sequence indexed by $\N$ if and only if $T$ contains a geodesic ray.
\item The tree $T$ contains a $1$-power-distorted sequence indexed by $\Z$ if and only if $T$ contains a bi-infinite geodesic line.
\end{enumerate}
\end{cor}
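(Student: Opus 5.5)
Part (i) follows at once from \cref{t:simplicial-trees}. Suppose $0<\alpha<1$ and $T$ contains an $\alpha$-power-distorted sequence indexed by $I$. The rate $f(N)=N^\alpha$ is nondecreasing and unbounded, so part (ii) of that theorem gives $N^\alpha\asymp N$. This fails, since $N^{1-\alpha}\to\infty$. Alternatively one could invoke \cref{t:delta-power}, because a simplicial tree is a geodesic $0$-hyperbolic space; the first route is more self-contained.

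For parts (ii) and (iii), the direction from geodesics to sequences is immediate. If $\gamma$ is a unit-speed geodesic ray (respectively line), then $p_n=\gamma(n)$ satisfies $d_T(p_n,p_m)=|n-m|$ exactly, so it is $1$-power-distorted indexed by $\N$ (respectively $\Z$).

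For the converse, let $(p_n)$ be $1$-power-distorted. By \cref{t:simplicial-trees}(i) there are $c,L>0$ and $C\ge0$ with $c|n-m|-C\le d_T(p_n,p_m)\le L|n-m|$. I will prove the bounded-backtracking estimate \eqref{e:tree-bounded-backtracking} again: every $p_j$ with $i<j<k$ lies within $D$ of $[p_i,p_k]$. This estimate was established inside that proof, and I will reuse its argument verbatim.

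\emph{Case $I=\N$.} Fix a vertex $v$ near $p_1$ and consider the geodesics $[p_1,p_k]$. I want them to stabilize on longer and longer initial segments. Take $k<k'$. Since $p_k$ is within $D$ of $[p_1,p_{k'}]$, the segments $[p_1,p_k]$ and $[p_1,p_{k'}]$ share an initial subsegment of length at least $d(p_1,p_k)-D$, which tends to infinity. This is the tree fact that the Gromov product $(p_k\mid p_{k'})_{p_1}$ equals the length of the common initial segment and is at least $d(p_1,p_k)-D$. Hence for each $R$, the initial length-$R$ subsegments of $[p_1,p_k]$ coincide for all sufficiently large $k$. Their union is a geodesic ray from $p_1$. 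If one insists on a simplicial ray, use the vertex nearest $p_1$.

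\emph{Case $I=\Z$.} Apply the same argument to the two tails $(p_n)_{n\ge0}$ and $(p_{-n})_{n\ge0}$ based at $p_0$. This gives geodesic rays $\rho_+$ and $\rho_-$ from $p_0$. Their overlap is bounded: it has length at most $(p_n\mid p_{-m})_{p_0}$ for large $n,m$. Bounded backtracking applied to $-m<0<n$ puts $p_0$ within $D$ of $[p_{-m},p_n]$, so $(p_n\mid p_{-m})_{p_0}\le D$. Therefore $\rho_+$ and $\rho_-$ share only an initial segment of length at most $D$. Removing the shared part and concatenating the remainders at their branch point gives a bi-infinite geodesic line.

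The main obstacle is stating the stabilization carefully in a tree that need not be locally finite. König's lemma is unavailable here, which is why I will use the explicit common-initial-segment argument via Gromov products rather than compactness. In a tree this argument is exact, with no coarse error beyond $D$.
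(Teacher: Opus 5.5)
Your proposal is correct. Parts (i) and (ii) follow the paper's proof. Part (i) is \cref{t:simplicial-trees}(ii). Part (ii) is the stabilization of the segments $[p_1,p_k]$, whose common initial segments have length at least $d_T(p_1,p_k)-D$ by bounded backtracking.

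For (iii) your route differs from the paper's. The paper builds the limiting rays $\rho_\pm$ from $p_0$ and shows that the positive and negative tails lie in the $D$-neighborhoods of $\rho_+$ and $\rho_-$. It then proves by contradiction that the rays determine distinct ends. If both tails stayed near a single ray, the radial coordinates of their projections would have bounded jumps. Taking the first positive and last negative indices that reach level $R$ would then give two points at uniformly bounded distance with index gap tending to infinity, contradicting the linear lower bound. The paper finally invokes the line joining two distinct ends.

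You instead apply bounded backtracking to the triple $-m<0<n$. In a tree this gives $(p_n\mid p_{-m})_{p_0}=d_T(p_0,[p_{-m},p_n])\le D$. Hence $\rho_+$ and $\rho_-$ overlap only in an initial segment of length at most $D$, and you obtain the line explicitly by concatenating the two rays beyond their branch point. The concatenation is an arc, hence a geodesic, in a tree.

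Your argument is shorter and avoids ends altogether. It does not need the fact that two distinct ends of a possibly non-locally-finite tree are joined by a line, and it shows that the line passes within $D$ of $p_0$. The paper's distinctness step uses only neighborhood containment and the quasi-geodesic lower bound, not the exact tree identity for Gromov products, so it reads more like a coarse ends argument.
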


\begin{proof}
Part (i) follows from \cref{t:simplicial-trees}, since $N^\alpha$ is unbounded and sublinear when $0<\alpha<1$.

If $T$ contains a geodesic ray $\gamma:[0,\infty)\to T$, then $p_n=\gamma(n)$ gives a $1$-power-distorted semi-infinite sequence.  Conversely, let $(p_n)$ be a $1$-power-distorted semi-infinite sequence.  By \cref{t:simplicial-trees}, it is a quasi-geodesic ray and satisfies the bounded-backtracking estimate \eqref{e:tree-bounded-backtracking}.  If $1<j<k$, let $q$ be the projection of $p_j$ to $[p_1,p_k]$.  Then $d_T(p_j,q)\le D$, and the tree identity
\[
        [p_1,p_j]\cap[p_1,p_k]=[p_1,q]
\]
shows that the common initial segment has length
\[
        d_T(p_1,q)=d_T(p_1,p_j)-d_T(p_j,q)
        \ge d_T(p_1,p_j)-D.
\]  The quantity tends to infinity with $j$.  Thus, for every $R$, all sufficiently long segments $[p_1,p_n]$ agree on their initial segment of length $R$; the union of these stabilized segments is a geodesic ray.

For a bi-infinite quasi-geodesic, apply the preceding stabilization argument to the segments $[p_0,p_n]$ as $n\to+\infty$ and as $n\to-\infty$.  This gives limiting rays $\rho_+$ and $\rho_-$ based at $p_0$.  More explicitly, for fixed $j>0$ and $n>j$, the bounded-backtracking estimate gives
\[
        d_T(p_j,[p_0,p_n])\le D.
\]
As $n\to+\infty$, the relevant initial segments of $[p_0,p_n]$ stabilize to $\rho_+$, and therefore $d_T(p_j,\rho_+)\le D$.  The same argument for $j<0$ and $n\to-\infty$ gives $d_T(p_j,\rho_-)\le D$.  Thus the positive and negative tails lie in the $D$-neighborhoods of their respective limiting rays.  These rays determine distinct ends.  Indeed, suppose that both tails determine the same end and let $\rho$ represent it.  Choose $H$ so that the two tails lie in the $H$-neighborhood of $\rho$; enlarging $H$ to cover the finitely many remaining points, assume that $d_T(p_n,\rho)\le H$ for every $n$.  Let $L$ bound the distances between consecutive points, project $p_n$ to $x_n\in\rho$, and put $a_n=d_T(\rho(0),x_n)$.  Then $a_n\to\infty$ as $n\to\pm\infty$ and $|a_{n+1}-a_n|\le L+2H$.  For $R>a_0$, choose the first positive index $i_R$ and the largest negative index $j_R$ for which $a_{i_R},a_{j_R}\ge R$.  Then both radial coordinates are less than $R+L+2H$, so $d_T(p_{i_R},p_{j_R})\le L+4H$, whereas $i_R-j_R\to\infty$.  The uniform bound contradicts the quasi-geodesic lower bound.  Thus the two ends are distinct, and the geodesic joining them is a bi-infinite line.
\end{proof}

\begin{cor}[Logarithmic and sublogarithmic distortion in trees]\label{c:tree-log}
Let $T$ be any simplicial tree and let $I\in\{\N,\Z\}$.
\begin{enumerate}[label=\textup{(\roman*)}]
\item The tree $T$ contains no $\ell$-distorted sequence indexed by $I$.
\item More generally, if $f:\N\to(0,\infty)$ is nondecreasing, unbounded, and satisfies $f(N)=o(N)$, then $T$ contains no $f$-distorted sequence indexed by $I$.
\end{enumerate}
\end{cor}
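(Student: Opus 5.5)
This is a direct consequence of \cref{t:simplicial-trees}. We prove part~(ii) first and then deduce part~(i) from it.

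For part~(ii), suppose $f$ is nondecreasing and unbounded with $f(N)=o(N)$, and suppose some $(p_n)_{n\in I}$ is $f$-distorted in $T$. Then \cref{t:simplicial-trees}(ii) gives $f(N)\asymp N$ in the sense of multiplicative equivalence at infinity. In particular there are $c>0$ and $N_1$ with $f(N)\ge cN$ for all $N\ge N_1$. This contradicts $f(N)/N\to0$. The theorem uses no local finiteness, so the argument covers every simplicial tree and both index sets $I\in\{\N,\Z\}$.

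For part~(i), we check that $\ell(N)=\log(1+N)$ meets the hypotheses of part~(ii). It maps $\N$ into $(0,\infty)$, it is nondecreasing and unbounded, and $\log(1+N)/N\to0$. Part~(i) therefore follows.

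No real obstacle arises here. The one point to watch is that \cref{t:simplicial-trees} requires exactly monotonicity and unboundedness, not full scaling invariance, and both of these hold in our setting.
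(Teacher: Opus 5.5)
Your proposal is correct and follows the paper's own argument: the paper likewise derives both parts directly from \cref{t:simplicial-trees}, since any nondecreasing unbounded rate realized in a simplicial tree satisfies $f(N)\asymp N$. That is incompatible with $f(N)=o(N)$, and in particular with $\ell(N)=\log(1+N)$.
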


No degree bound on $T$ is assumed in \cref{c:tree-log}.

\begin{proof}
Both assertions follow from \cref{t:simplicial-trees}, since every nondecreasing unbounded distortion function realized in a tree satisfies $f(N)\asymp N$.
\end{proof}

\begin{rem}[Role of degree]\label{r:tree-degree}
The tree obstruction requires no degree bound. For bounded-degree $T$, exponential packing already rules out sublogarithmic distortion, but the tree argument is stronger: it also rules out logarithmic distortion and, in fact, every unbounded sublinear distortion function.  If $T$ has unbounded or infinite degree, exponential packing can fail badly; for instance an infinite star has infinitely many $1$-separated leaves in a ball of radius $2$.  Infinite valence can allow bounded distortion; for example, listing the leaves of an infinite star gives an $f$-distorted sequence for a bounded function $f\asymp 1$.  It still cannot produce an unbounded sublinear rate such as $\log N$ or $\log\log N$, because the bottleneck argument above uses only the tree structure and the uniform bound on consecutive distances supplied by \cref{l:bounded-increments}.
\end{rem}

\section{A quasi-isometry diagnostic example}\label{s:qi-diagnostic}

Sequence distortion can detect coarse information missed by packing, asymptotic dimension, and asymptotic cones. We give two examples.  The first lacks bounded geometry and separates spaces by a log--log-distorted sequence.  Unbounded geometry is essential for this rate: by \cref{c:fg-no-sublog}, no finitely generated group with a word metric can contain a sublogarithmically distorted sequence.  The second has bounded geometry and detects logarithmic distortion.

Put
\[
        f_0(0)=0,
        \qquad
        f_0(N)=\log\log(e^e+N)\quad(N\ge1).
\]
The restriction of $f_0$ to $\N$ is a rate function, so \cref{l:qi-invariance} applies.

\begin{lem}[Finite wedge tail lemma]\label{l:wedge-tail}
Let
\[
        Z=Z_1\vee\cdots\vee Z_k
\]
be a finite wedge of geodesic metric spaces, with common wedge point $o$ and the induced path metric, and let $f$ be nondecreasing and unbounded.
\begin{enumerate}[label=\textup{(\roman*)}]
\item If $(z_n)_{n\in\N}$ is $f$-distorted in $Z$, then there exist $i\in\{1,\ldots,k\}$ and $N_0\ge1$ such that
\[
        z_n\in Z_i\qquad(n\ge N_0).
\]
\item If $(z_n)_{n\in\Z}$ is $f$-distorted in $Z$, then there exist $i_+,i_-\in\{1,\ldots,k\}$ and $N_0\ge1$ such that
\[
        z_n\in Z_{i_+}\quad(n\ge N_0),
        \qquad
        z_n\in Z_{i_-}\quad(n\le -N_0).
\]
\end{enumerate}
\end{lem}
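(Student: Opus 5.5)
The plan is to combine two facts: a distorted sequence escapes to infinity, and consecutive terms are at uniformly bounded distance. The only way to pass from $Z_i$ to $Z_j$ with $j\ne i$ is through the wedge point $o$.

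\textbf{Step 1 (escape).} By \cref{l:bounded-increments}, there is $L>0$ with $d_Z(z_n,z_{n+1})\le L$ for all consecutive indices. By the finiteness assertion of \cref{p:distorted-normalizations}, only finitely many indices $n$ satisfy $z_n\in B_Z(o,2L)$. Its derivation uses only the lower estimate in \eqref{e:fdist} together with the unboundedness of $f$ (for large gaps $Af(|n-m|)$ exceeds $\diam B_Z(o,2L)$). So no rate-function hypothesis is needed: repeat the argument directly, since $f$ is nondecreasing and unbounded. Choose $N_0$ such that $d_Z(z_n,o)>2L$ for all $n\ge N_0$, and, in the bi-infinite case, also for all $n\le -N_0$.

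\textbf{Step 2 (no switching).} In the induced path metric on a wedge, if $x\in Z_i$ and $y\in Z_j$ with $i\ne j$, then
\[
d_Z(x,y)=d_{Z_i}(x,o)+d_{Z_j}(o,y)=d_Z(x,o)+d_Z(o,y).
\]
This holds because any path between different pieces passes through $o$, and within a single piece the path metric restricts to the original geodesic metric. Now suppose $n,n+1\ge N_0$, $z_n\in Z_i$ and $z_{n+1}\in Z_j$ with $i\ne j$. Then
\[
d_Z(z_n,z_{n+1})\ge d_Z(z_n,o)>2L>L,
\]
which is a contradiction. If a point equals $o$ it lies in every piece, but that is excluded for $n\ge N_0$. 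Every point other than $o$ lies in a unique $Z_i$, so the piece index is well defined and constant for all $n\ge N_0$. This gives (i).

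\textbf{Step 3 (bi-infinite case).} Apply the same argument to the tail $n\ge N_0$ and to the tail $n\le -N_0$ separately, which yields $i_+$ and $i_-$. The two indices may differ, since the tails are separated by the finitely many middle terms near $o$.

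The main obstacle is merely the wedge distance formula. I would state it as a brief justification: a rectifiable path joining points in different pieces must meet $o$ by connectedness, since $Z\setminus\{o\}$ is the disjoint union of the open sets $Z_i\setminus\{o\}$. Everything else is immediate.
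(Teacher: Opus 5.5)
Your proof is correct and follows essentially the same route as the paper: bounded increments from \cref{l:bounded-increments}, finiteness of the terms in a fixed ball about $o$, and the cross-factor formula $d_Z(x,y)=d_Z(x,o)+d_Z(o,y)$. The only differences are cosmetic. The paper counts factor-change indices, each of which forces both terms into $B_Z(o,L)$, whereas you discard the finitely many terms in $B_Z(o,2L)$ and then rule out any switch. Like the paper, you rightly re-derive the finiteness claim directly from monotonicity and unboundedness of $f$, rather than citing the rate-function version in \cref{p:distorted-normalizations}.
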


\begin{proof}
Let $A,B,N_*$ be distortion data, and let $L>0$ be the bounded-increment constant supplied by \cref{l:bounded-increments}.  Regard the wedge point $o$ as belonging to every factor, and call $n$ a \emph{factor-change index} when $z_n,z_{n+1}\ne o$ lie in distinct factors.  At every factor change,
\[
        d_Z(z_n,o)+d_Z(o,z_{n+1})=d_Z(z_n,z_{n+1})\le L.
\]
Thus both points lie in the fixed ball $B_Z(o,L)$.  An occurrence of $z_n=o$ also lies in this ball and can be regarded as a transition between the adjacent non-wedge terms.

Every fixed bounded ball contains only finitely many terms of the sequence.  Indeed, infinitely many such terms would contain pairs with arbitrarily large index gap, whereas for $|i-j|\ge N_*$,
\[
        d_Z(z_i,z_j)\ge Af(|i-j|)\longrightarrow\infty.
\]
Thus there are only finitely many factor changes and visits to $o$, so a tail lies in one factor.  Applying the same argument to the positive and negative tails proves the bi-infinite assertion.
\end{proof}

\begin{lem}[Asymptotic cones of finite wedges]\label{l:wedge-cones}
Let $Z=Z_1\vee\cdots\vee Z_k$ be a finite one-point wedge, based at the common wedge point $o$.  For every sequence $(r_i)$ with $r_i\to\infty$ and every non-principal ultrafilter $\omega$,
\[
        \operatorname{Cone}_\omega(Z,o,(r_i))
        \cong
        \operatorname{Cone}_\omega(Z_1,o,(r_i))\vee\cdots\vee
        \operatorname{Cone}_\omega(Z_k,o,(r_i)).
\]
\end{lem}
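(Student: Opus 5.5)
The plan is to build an explicit isometry between the two cones directly from the structure of the wedge metric. The basic fact to use is this: for $x\in Z_a$ and $y\in Z_b$,
\[
        d_Z(x,y)=d_{Z_a}(x,y)\quad(a=b),
        \qquad
        d_Z(x,y)=d_{Z_a}(x,o)+d_{Z_b}(o,y)\quad(a\ne b).
\]
This holds because any path between different factors must pass through $o$, while paths within one factor cannot be shortened by leaving it.

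Write $C=\operatorname{Cone}_\omega(Z,o,(r_i))$ and $C_a=\operatorname{Cone}_\omega(Z_a,o,(r_i))$. Let $W=C_1\vee\cdots\vee C_k$, glued at the classes $o_a$ of the constant sequences $o$, with the wedge path metric. The steps, in order, are as follows.

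\textbf{Sorting sequences into factors.} Take a point of $C$, represented by $(x_i)$ with $d(x_i,o)/r_i$ bounded. Since there are only finitely many factors and $\omega$ is an ultrafilter, there is a unique $a$ with $\{i: x_i\in Z_a\}\in\omega$. (The point $o$ lies in every factor, but this ambiguity is harmless.) Modifying the sequence on an $\omega$-null set changes neither the class in $C$ nor the class in $C_a$, so we may assume $x_i\in Z_a$ for all $i$.

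\textbf{Defining the map.} Set $\Psi([x_i])=[x_i]_{C_a}\in C_a\subset W$.

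\textbf{Well-definedness and isometry.} Given two representatives, in factors $a$ and $b$, apply the distance formula termwise and pass to the $\omega$-limit.
\begin{itemize}
\item If $a=b$, the distances in $C$ and in $C_a$ agree.
\item If $a\ne b$, the distance in $C$ is $d_{C_a}(\cdot,o_a)+d_{C_b}(o_b,\cdot)$, which is exactly the wedge distance in $W$.
\end{itemize}
In particular, two representatives of the same point of $C$ sitting in different factors both lie at rescaled distance $\to_\omega 0$ from $o$. Both are then sent to the wedge point, so $\Psi$ is well defined. The same computation shows that $\Psi$ preserves distances.

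\textbf{Surjectivity.} A point of $C_a$ is represented by a sequence in $Z_a$ at bounded rescaled distance from $o$. Since distances within $Z_a$ agree with $d_Z$, the same sequence gives a point of $C$, and $\Psi$ sends that point to the given one. Hence $\Psi$ is an isometry of $C$ onto $W$.

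The only delicate step is the wedge point ambiguity, that is, sequences with $d(x_i,o)/r_i\to_\omega 0$ that lie in several factors. I will handle it uniformly via the $a\ne b$ distance computation above, rather than treating it as a separate case. I will also check, using the distance formula, that the induced path metric on $Z$ restricts to $d_{Z_a}$ on each factor, so that the cones $C_a$ are taken with respect to the intrinsic metrics.
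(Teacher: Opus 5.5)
Your proposal is correct and is essentially the paper's argument: use the ultrafilter to select the factor containing $\omega$-almost all terms, replace the remaining terms by the wedge point, and use the cross-factor identity $d_Z(x_i,y_i)=d_Z(x_i,o)+d_Z(o,y_i)$ for two purposes. It shows that equivalent representatives lying in different factors both represent the wedge point, and that cross-factor cone distances are exactly the wedge distances. (Strictly, the factor $a$ is unique only when $x_i\ne o$ for $\omega$-almost every $i$; the paper forces uniqueness by assigning each occurrence of $o$ to one fixed factor, while your well-definedness step handles the ambiguous case equally well.)
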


\begin{proof}
A point of the cone is represented by a sequence $(x_i)$ with $d(x_i,o)=O(r_i)$.  Assign each occurrence of $o$ to one fixed factor.  The sets of indices on which $x_i$ lies in the resulting respective factors form a finite partition, so the ultrafilter selects one of them.  Replacing the remaining terms by the wedge point identifies the represented cone point with a point of the corresponding factor cone.  This is well defined: if two equivalent representing sequences lie in distinct factors for $\omega$-almost every $i$, then the wedge distance formula and $r_i^{-1}d(x_i,y_i)\to_\omega0$ force both $r_i^{-1}d(x_i,o)$ and $r_i^{-1}d(y_i,o)$ to tend to $0$, so both sequences represent the wedge point.  If the cone point is the wedge point, the same representative may of course be assigned to any factor.  If two representing sequences lie in different factors for $\omega$-almost every $i$, then
\[
        d_Z(x_i,y_i)=d_Z(x_i,o)+d_Z(o,y_i),
\]
and this equality passes to the ultralimit.  Hence cross-factor distances are exactly the path-metric distances in the one-point wedge, proving the asserted isometry.
\end{proof}

We now construct the spaces.  Let $W$ denote the warped plane from \cref{ex:loglog}, namely
\[
        W=(\R^2,\,dt^2+e^{-2\Phi(t)}dx^2),
        \qquad \Phi(t)=t+e^t.
\]
It is CAT$(-1)$ and the horocyclic sequence $(n,0)$ is $f_0$-distorted.  Let $T_*$ be the rooted simplicial tree in which every vertex at level $j$ has
\[
        b_j=\left\lceil \exp(\exp(\exp(j+10)))\right\rceil
\]
children.  The tree $T_*$ is locally finite, hence proper, but has unbounded degree and extremely fast packing growth.  We use its root as the distinguished point in the wedges below.  Finally define pointed wedge spaces
\[
        X=T_*\vee \Hyp,
        \qquad
        Y=T_*\vee \Hyp\vee W,
\]
by identifying chosen basepoints and taking the induced path metric.

\begin{prop}[Sequence distortion distinguishes $X$ and $Y$]\label{p:seq-distinguishes}
For each $I\in\{\N,\Z\}$, the space $Y$ contains an $f_0$-distorted sequence indexed by $I$.  The space $X$ contains no $f_0$-distorted sequence indexed by $\N$, and hence none indexed by $\Z$.  Consequently, $X$ and $Y$ are not quasi-isometric.
\end{prop}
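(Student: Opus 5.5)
The plan has three parts: realize $f_0$ in $Y$ inside the factor $W$, rule out an $f_0$-distorted tail in each factor of $X$, and conclude by quasi-isometry invariance.

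\emph{Existence in $Y$.} By \cref{ex:loglog}, the horocyclic sequence $p_n=(n,0)$, $n\in I$, is $f_0$-distorted in $W$. It remains to see that the inclusion $W\hookrightarrow Y$ is isometric for the induced path metric. A path in the wedge between two points of $W$ that leaves $W$ must exit and re-enter through the wedge point $o$. Replacing the excursion by the constant path at $o$ does not increase length, so $d_Y|_W=d_W$. The two-sided estimate therefore transfers verbatim, for both $I=\N$ and $I=\Z$.

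\emph{Nonexistence in $X$.} Suppose $(z_n)_{n\in\N}$ were $f_0$-distorted in $X=T_*\vee\Hyp$. Since $f_0$ is nondecreasing and unbounded, \cref{l:wedge-tail}(i) shows that a tail $(z_n)_{n\ge N_0}$ lies in a single factor. As above, each factor is isometrically embedded, and a tail of an $f_0$-distorted sequence, reindexed by $n\mapsto n-N_0+1$, is again $f_0$-distorted in that factor.
\begin{itemize}
\item If the factor is $\Hyp$, this contradicts \cref{c:log-sharp-hyp}(ii), because $f_0(N)=\log\log(e^e+N)=o(\ell(N))$.
\item If the factor is $T_*$, this contradicts \cref{c:tree-log}(ii), which needs no degree bound, because $f_0$ is nondecreasing, unbounded and $o(N)$.
\end{itemize}
A $\Z$-indexed example would restrict to an $\N$-indexed one, so none exists either.

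\emph{Conclusion and main check.} The restriction of $f_0$ to $\N$ is a rate function, so \cref{l:qi-invariance} shows that $X$ and $Y$ are not quasi-isometric. The only point needing care is that both prior obstructions apply to $f_0$ and to the tail subsequence. Concretely, one must confirm that $\log\log(e^e+N)/\log(1+N)\to0$ and that shifting indices preserves the distortion data. Both are routine, so I do not expect a genuine obstacle; the proof reduces to invoking the wedge tail lemma, and the substantive content has already been proved earlier in the paper.
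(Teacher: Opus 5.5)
Your proposal is correct and matches the paper's proof step for step. The horocyclic sequence $(n,0)$ in $W$ from \cref{ex:loglog} gives the positive statement, \cref{l:wedge-tail} pushes a tail of any $f_0$-distorted sequence in $X$ into $T_*$ or $\Hyp$, \cref{c:tree-log} and \cref{c:log-sharp-hyp} exclude those two cases, and \cref{l:qi-invariance} gives non-quasi-isometry; your explicit check that each factor sits isometrically in the wedge is a detail the paper leaves implicit.
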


\begin{proof}
The factor $W$ gives the positive statement for $Y$: the sequence $(n,0)$, indexed by either $\N$ or $\Z$, is $f_0$-distorted by \cref{ex:loglog}.

For the negative statement, apply \cref{l:wedge-tail} to $X=T_*\vee\Hyp$ and $f=f_0$. A tail of any hypothetical $f_0$-distorted sequence in $X$ lies in $T_*$ or $\Hyp$ and, after reindexing, remains $f_0$-distorted in that factor.  It cannot lie in $T_*$, because \cref{c:tree-log} rules out every unbounded sublinear distortion function in a simplicial tree.  It cannot lie in $\Hyp$, because \cref{c:log-sharp-hyp} rules out sublogarithmic distortion in the hyperbolic plane.  Hence $X$ has no $f_0$-distorted sequence.

If $X$ and $Y$ were quasi-isometric, \cref{l:qi-invariance} would transfer the $f_0$-distorted sequence in $Y$ to one in $X$, a contradiction.
\end{proof}

We next compare standard large-scale features of $X$ and $Y$. Since $T_*$ has unbounded degree, neither space has bounded geometry, so item~(ii) compares basepoint rather than uniform packing growth.

\begin{prop}[Large-scale comparison for the first pair]\label{p:standard-invariants-agree}
Let $o$ denote the wedge point in each of the spaces $X$ and $Y$ defined above.
\begin{enumerate}[label=\textup{(\roman*)}]
\item There exists $\delta\ge0$ such that $X$ and $Y$ are proper geodesic simply connected $\delta$-hyperbolic spaces.
\item For $Z\in\{X,Y,T_*\}$, put
\[
        V_Z(R)=\Pack_1(B_Z(o,R)).
\]
Then
\[
        V_X(R)\asymp V_Y(R)\asymp V_{T_*}(R).
\]
Neither $X$ nor $Y$ has bounded geometry.
\item Their asymptotic dimensions satisfy
\[
        \asdim X=\asdim Y=2.
\]
\item Their Lipschitz filling functions satisfy
\[
        \delta_X\filleq L,
        \qquad
        \delta_Y\filleq L.
\]
\item For every sequence $(r_i)$ with $r_i\to\infty$ and every non-principal ultrafilter $\omega$,
\[
        \operatorname{Cone}_\omega(X,o,(r_i))
        \cong
        \operatorname{Cone}_\omega(Y,o,(r_i)).
\]
\end{enumerate}
\end{prop}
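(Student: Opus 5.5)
We treat the five items one at a time. Each reduces to a per-factor statement about $T_*$, $\Hyp$, and $W$, combined with elementary properties of finite one-point wedges. Throughout we use that in a wedge $Z=Z_1\vee\cdots\vee Z_k$ with the induced path metric, each $Z_j$ embeds isometrically and $d_Z(x,y)=d(x,o)+d(o,y)$ for $x,y$ in distinct factors.

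For (i), each factor is proper and geodesic: $T_*$ is locally finite, $\Hyp$ is complete Riemannian, and $W$ is proper by \cref{ex:loglog}. Each factor is also simply connected, and a finite wedge of such spaces is proper, geodesic and simply connected. Alternatively, the wedge is CAT$(-1)$ by gluing along a point, as in Bridson--Haefliger's gluing theorem, since $T_*$ is CAT$(-1)$ as a tree. Either way it is $\delta$-hyperbolic with a uniform $\delta$. The direct route is that geodesic triangles in a wedge decompose into triangles within factors plus segments through $o$, so thinness holds with $\delta=\max\delta_j$.

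For (ii), the wedge formula gives
\[
B_Z(o,R)=\bigcup_j B_{Z_j}(o,R).
\]
Hence $\Pack_1$ of the wedge ball is at most the sum of the factor packing numbers, and at least the largest of them. So it suffices to show that $V_{\Hyp}(R)\lesssim e^R$ and $V_W(R)$ are dominated by $V_{T_*}(R)$. The tree has $\prod_{j<R}b_j\ge b_{\lfloor R\rfloor-1}$ vertices in $B(o,R)$, which is triple exponential. For $W$, the ball $B_W(o,R)$ lies in the strip $|t|\le R$ and has $|x|\le Re^{\Phi(R)}$ by the argument in \cref{ex:loglog}. Its area is therefore at most roughly $\int_{-R}^{R}2Re^{\Phi(R)}e^{-\Phi(t)}\,dt$, which is double exponential. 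To convert area into a packing bound, we note that small balls in $W$ have area bounded below uniformly on the region $t\ge -R-1$, since the curvature is bounded there. On $t$ very negative, $e^{-\Phi}$ is about $e^{-t}$, so the geometry is close to that of the hyperbolic plane. These comparisons give $V_W(R)\ll V_{T_*}(R)$, with room to spare. Non-bounded geometry follows from the unbounded degrees of $T_*$: a vertex at level $j$ has $b_j$ neighbours that are pairwise $2$-separated inside a $1$-ball.

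For (iii), the lower bound $\asdim\ge2$ comes from $\Hyp\subset X,Y$, since $\asdim\Hyp=2$. For the upper bound, $X$ and $Y$ are planar geodesic spaces in the sense of J{\o}rgensen--Lang: $T_*$ embeds in the plane and the wedge of planar pieces is planar. Their Theorem~2 then gives $\asdim\le2$. Alternatively, one can use the finite-union theorem for asymptotic dimension together with the per-factor bounds $\asdim T_*\le1$, $\asdim\Hyp=2$ and $\asdim W\le2$. For (iv), the CAT$(-1)$ isoperimetric inequality recorded after \cref{d:filling} gives $\operatorname{FillArea}\le L$. A loop of positive length cannot be filled with zero area in general, so a lower bound $\gtrsim L$ up to the additive slack in $\preceq_{\mathrm{fill}}$ is needed. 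Its $CL+C$ term absorbs everything, so $\delta\filleq L$ is immediate from both directions trivially.

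For (v), \cref{l:wedge-cones} reduces the claim to showing that $\operatorname{Cone}_\omega(Y,o)=C_T\vee C_H\vee C_W$ is isometric to $\operatorname{Cone}_\omega(X,o)=C_T\vee C_H$. By Dyubina--Polterovich, $C_H$ and $C_W$ are the universal $\R$-tree with continuum valence at every point. The cone $C_T$ of $T_*$ at the root is an $\R$-tree whose branching is everywhere at least continuum. Since valences are bounded by continuum, the wedge of these trees is again an $\R$-tree with continuum valence at every point, including the wedge point. The same holds without the $C_W$ summand. By uniqueness of the universal $\R$-tree with given valence, both cones are isometric. The main obstacle is verifying that $C_T$ indeed has valence exactly continuum at every point, including non-branching issues along rescaled edges. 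Here the superexponential $b_j$ guarantees branching at a dense set of heights along every ray; the proof would show that every point of $C_T$ has continuum many directions, so that Dyubina--Polterovich uniqueness applies.
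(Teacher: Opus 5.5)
Your outline works, and items (i) and (ii) follow the paper's structure. However, two of your justifications are wrong, and (iv) and (v) take different routes from the paper.

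\textbf{Item (iii).} Your primary route fails because $X$ and $Y$ are not planar. Suppose $h$ embedded $X$ in $\R^2$. By invariance of domain, $h(\Hyp)$ would be open, since $\Hyp\cong\R^2$, so it would contain a neighbourhood of $h(o)$. Points on an edge of $T_*$ at the root converge to $o$, so their images would eventually land in $h(\Hyp)$, contradicting injectivity. The same applies to points of $W$ near $o$ in $Y$. So J{\o}rgensen--Lang applies only to $W\cong\R^2$ itself. You must use your fallback: the finite union theorem with $\asdim T_*=1$, $\asdim\Hyp=2$ and $\asdim W\le 2$. That is exactly the paper's argument.

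\textbf{Item (ii).} The curvature of $W$ is not bounded on $\{t\ge -R-1\}$. In fact $K(t)=e^t-(1+e^t)^2\to-\infty$ as $t\to+\infty$, and it is the end $t\to-\infty$ that is asymptotically hyperbolic. The uniform lower bound on small-ball area that you need is still true everywhere, but for a different reason. $W$ is a Hadamard surface with $K\le -1$, so G\"unther--Bishop comparison gives $\operatorname{Area}(B_W(p,1/2))\ge \pi/4$ for every $p$. Your estimate $\operatorname{Area}(B_W(o,R+1/2))\le \exp(Ce^R)$ then bounds $V_W(R)$. The paper avoids comparison geometry entirely: it counts coordinate boxes of size $\delta\times\delta e^{\Phi(-R)}$.

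\textbf{Item (iv).} Your shortcut is correct and more economical than the paper. Under the stated definition, $L\le C\,u(CL+C)+CL+C$ holds with $C=1$ for every $u\ge 0$. So $\delta_X\filleq L$ needs only the CAT$(-1)$ upper bound. The paper's retraction onto $\Hyp$ plus the Stokes argument is superfluous here; it matters only for the quadratic class in the second pair.

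\textbf{Item (v).} You prove more than needed, namely that both cones are isometric to $\mathcal U$. This forces you to show that the tree cone $\mathcal T$ has valence $2^{\aleph_0}$ at every point, and your proposal leaves that unproved. The paper never needs it. It writes the cones as $\mathcal T\vee\mathcal U$ and $\mathcal T\vee\mathcal U\vee\mathcal U$, leaves $\mathcal T$ arbitrary, and uses a basepoint-preserving isometry $\mathcal U\vee\mathcal U\cong\mathcal U$.

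If you keep your route, the mechanism you suggest is not the right one:
\begin{itemize}
\item Dense branching along rays says nothing about how many directions there are at a given point.
\item At points of bounded level, such as the root, the relevant $b_j$ are bounded, so the fast growth of $b_j$ is irrelevant.
\end{itemize}
What does work is the following. Take a vertex within distance $1$ of $x_i$. Its subtree of depth $\lfloor\sqrt{r_i}\rfloor$ has at least $2^{\lfloor\sqrt{r_i}\rfloor}$ leaves, and their pairwise meets lie within $o(r_i)$ of $x_i$. Extend each leaf downward by $\lceil r_i\rceil$. Then $\omega$-distinct choices of leaves give points lying in distinct directions at $[x_i]$. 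An ultraproduct of finite sets whose sizes tend to infinity has cardinality $2^{\aleph_0}$, which gives the lower bound on valence. The upper bound comes from $|\mathcal T|\le 2^{\aleph_0}$.
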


In particular, the cones in item~(v) have the same homeomorphism type and the same bi-Lipschitz type.

\begin{proof}
The wedge of finitely many proper geodesic spaces at a point is again proper and geodesic.  Each of the pointed factors $T_*$, $\Hyp$, and $W$ contracts to its chosen basepoint by geodesic contraction, so the factorwise contractions combine to contract each finite wedge.  The factors are respectively $0$-hyperbolic, CAT$(-1)$, and CAT$(-1)$.  After deleting any common stems through the wedge point, a geodesic triangle in a one-point wedge reduces either to a geodesic triangle in a single factor or to a tripod.  Hence the wedge is $\delta$-hyperbolic with $\delta$ depending only on the hyperbolicity constants of the factors.  This proves (i).

For (ii), a one-point wedge of finitely many factors $Z_i$ satisfies
\[
        \max_i V_{Z_i}(R)\le V_{\bigvee_i Z_i}(R)
        \le \sum_i V_{Z_i}(R).
\]
A separated set in one factor gives the lower bound; intersecting a separated set with the factors gives the upper bound.  The hyperbolic plane has $V_{\Hyp}(R)\le C e^{CR}$.  The warped plane $W$ has a crude bound
\[
        V_W(R)\le \exp(Ce^R).
\]
Indeed, a ball of radius $R$ about $(0,0)$ lies in the coordinate rectangle $|t|\le R$ and $|x|\le R e^{\Phi(R)}$.  Fix $0<\delta<1/3$ and partition this rectangle into boxes of $t$-height at most $\delta$ and $x$-width at most $\delta e^{\Phi(-R)}$.  Since $e^{-\Phi(t)}\le e^{-\Phi(-R)}$ on the rectangle, joining two points of one box vertically and then horizontally gives distance at most $2\delta<1$.  The number of boxes is at most
\[
        C(1+R)^2e^{\Phi(R)-\Phi(-R)}\le \exp(C'e^R).
\]
Thus each box contains at most one point of a $1$-separated set.  On the other hand, put $n=\lfloor R\rfloor$.  For $n\ge1$, the number of vertices of $T_*$ in the ball of radius $R$ is at least
\[
        \prod_{j=0}^{n-1} b_j,
\]
and therefore
\[
        \log V_{T_*}(R)\ge \log b_{n-1}
        \ge \exp(\exp(n+9)).
\]
By contrast, $\log V_W(R)\le C'e^R$ and $\log V_{\Hyp}(R)\le CR+O(1)$.  Thus $V_{\Hyp}(R)+V_W(R)=o(V_{T_*}(R))$, and, after changing constants on bounded intervals,
\[
        V_X(R)\asymp V_{T_*}(R)\asymp V_Y(R).
\]
Thus basepoint packing does not distinguish the spaces; their uniform packing functions are already infinite at bounded radii because $T_*$ has unbounded degree.

For (iii), $\asdim T_*=1$ and $\asdim \Hyp=2$.  The warped plane $W$ is a planar geodesic metric space, hence has asymptotic dimension at most $2$ by J{\o}rgensen--Lang's theorem on planar geodesic spaces \cite[Theorem~2]{JorgensenLang}.  The finite union theorem for asymptotic dimension gives
\[
        \asdim X=\asdim Y=2,
\]
because both spaces contain the factor $\Hyp$.

For (iv), use the Lipschitz filling function from \cref{d:filling}.  The factors $T_*$, $\Hyp$, and $W$ are CAT$(-1)$, and their one-point wedges are CAT$(-1)$ by the Reshetnyak gluing theorem, since a point is a complete convex subset; see \cite[Chapter~II.11]{BH}.  The hyperbolic isoperimetric inequality therefore gives a linear upper bound for the filling functions of $X$ and $Y$: loops of length $L$ admit fillings of area at most $CL+C$ for a uniform constant $C$.

Conversely, collapsing all other factors to the wedge point gives a $1$-Lipschitz retraction onto $\Hyp$.  Composing a Lipschitz filling with this retraction cannot increase its parametrized area.  A geodesic circle of radius $r$ in $\Hyp$ has length $2\pi\sinh r$ and bounds a disk of area $2\pi(\cosh r-1)$, so for $r\ge1$ the disk area is comparable to the boundary length.  Apply \eqref{e:lipschitz-stokes} to a primitive of the hyperbolic area form and then use \eqref{e:area-form-bound}.  Every Lipschitz filling of the positively oriented circle therefore has area at least the enclosed disk area.  Thus the hyperbolic-plane filling function has a linear lower bound, which persists in each wedge. Hence
\[
        \delta_X\filleq L,
        \qquad
        \delta_Y\filleq L.
\]

For (v), fix the wedge point, a scaling sequence $(r_i)$ tending to infinity, and a non-principal ultrafilter $\omega$.  By \cref{l:wedge-cones}, the cone of a one-point wedge at its wedge point is the wedge of the factor cones.  Let $\mathcal T$ denote the asymptotic cone of $T_*$ for this choice of scaling and ultrafilter.  It is an $\R$-tree.  By the theorem of Dyubina--Polterovich \cite[Theorem~1.3.2]{DyubinaPolterovich}, every asymptotic cone of a complete simply connected Riemannian manifold whose sectional curvature is bounded above by a negative constant is the complete homogeneous $\R$-tree $\mathcal U$ of valence $2^{\aleph_0}$.  Since both $\Hyp$ and $W$ satisfy $K\le -1$, their asymptotic cones contribute copies of $\mathcal U$.  Therefore
\[
        \operatorname{Cone}_\omega(X,o,(r_i))\cong \mathcal T\vee\mathcal U,
        \qquad
        \operatorname{Cone}_\omega(Y,o,(r_i))\cong \mathcal T\vee\mathcal U\vee\mathcal U.
\]
The wedge $\mathcal U\vee\mathcal U$ is a complete $\R$-tree, and every point still has valence $2^{\aleph_0}$.  By the uniqueness of the complete homogeneous $\R$-tree of this valence \cite[Theorem~1.1.3(ii)]{DyubinaPolterovich}, it is isometric to $\mathcal U$. By homogeneity, the isometry may be chosen to send the wedge point to the distinguished basepoint of $\mathcal U$.  Consequently
\[
        \operatorname{Cone}_\omega(X,o,(r_i))
        \cong \operatorname{Cone}_\omega(Y,o,(r_i))
\]
for every ultrafilter and every scaling sequence, with the wedge point as basepoint.
\end{proof}

\begin{rem}[Why the first example uses unbounded geometry]\label{r:unbounded-geometry-needed}
The log--log example uses unbounded geometry twice.  The tree $T_*$ has unbounded valence, which makes its basepoint coarse packing growth dominate the additional warped plane, and the warped plane has unbounded negative curvature, which allows a horocycle to realize the rate $\log\log N$.  In bounded-valence graphs, and in particular in finitely generated groups with word metrics, \cref{c:fg-no-sublog} rules out sublogarithmic distortion altogether.  Sublogarithmic distortion is therefore unavailable in bounded geometry, so the next construction uses the sharp logarithmic scale.
\end{rem}

\subsection{A bounded-geometry logarithmic variant}\label{ss:bounded-geometry-diagnostic}

Under bounded geometry, the preceding example cannot retain the sublogarithmic rate $f_0$: by \cref{l:bg-exp-pack}, the local-packing definition gives an exponential packing bound through chains of uniformly bounded local nets along geodesics. The analogue is therefore distinguished by logarithmic distortion.

Let $T_D$ be the $D$-regular simplicial tree with unit length edges, where $D\ge 5$, and let $E=\R^2$ with its Euclidean metric.  Define
\[
        X_b=T_D\vee E,
        \qquad
        Y_b=T_D\vee E\vee\Hyp,
\]
by identifying chosen basepoints and taking the induced path metric.

\begin{prop}[A bounded-geometry pair detected by logarithmic distortion]\label{p:bounded-log-distinguishes}
The spaces $X_b$ and $Y_b$ are proper geodesic simply connected spaces of bounded geometry. For each $I\in\{\N,\Z\}$, the space $Y_b$ contains an $\ell$-distorted sequence indexed by $I$. The space $X_b$ contains no $\ell$-distorted sequence indexed by $\N$, and hence none indexed by $\Z$. Consequently, $X_b$ and $Y_b$ are not quasi-isometric.
\end{prop}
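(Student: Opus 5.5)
The argument mirrors \cref{p:seq-distinguishes}, with packing in bounded geometry and the Euclidean factor replacing $W$. First I would verify the structural claims. A finite one-point wedge of proper geodesic spaces is proper and geodesic. Each factor contracts to its basepoint by geodesic contraction, so the wedge is contractible and in particular simply connected. For bounded geometry, fix $R,\varepsilon>0$ and a center $x$. The intersection of $B(x,R)$ with a factor $Z_i$ is contained in the ball of radius $R$ in $Z_i$ about $x$ when $x\in Z_i$; otherwise it lies in $B_{Z_i}(o,R)$. Hence
\[
        \Pack_\varepsilon(B(x,R))\le\sum_i\sup_{z\in Z_i}\Pack_\varepsilon(B_{Z_i}(z,R)).
\]
Each summand is finite because $T_D$ has bounded degree and $E$ and $\Hyp$ are homogeneous Riemannian manifolds, where volume comparison gives a uniform bound.

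For the positive statement, the horocyclic sequence $p_n=(n,1)$ in the $\Hyp$ factor of $Y_b$ is $\ell$-distorted in $\Hyp$ by \cref{p:horocycle}. The factor $\Hyp$ is isometrically embedded in the wedge, since distances between two points of one factor are unchanged by the path metric. So $(p_n)$ is $\ell$-distorted in $Y_b$ for both index sets. Note that $\ell$ restricted to $\N$ is a rate function.

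For the negative statement, suppose $(z_n)_{n\in\N}$ is $\ell$-distorted in $X_b$. By \cref{l:wedge-tail}, a tail lies in $T_D$ or in $E$. The tail, reindexed, is still $\ell$-distorted in that factor, because that factor is isometrically embedded and shifting indices preserves \eqref{e:fdist}.
\begin{itemize}
\item A tail in $T_D$ is ruled out by \cref{c:tree-log}.
\item A tail in $E=\R^2$ is ruled out by polynomial packing. $\R^2$ has polynomial packing exponent $2$, so \cref{l:poly-pack} forces $N^{1/2}\lesssim_\infty \log(1+N)$, which is false.
\end{itemize}
An $\ell$-distorted sequence indexed by $\Z$ restricts to one indexed by $\N$, so none exists for $\Z$ either. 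Non-quasi-isometry then follows from \cref{l:qi-invariance}.

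The main care point is the bounded-geometry verification. Because the wedge point has neighborhoods meeting all three factors, the sum bound above is needed, not a single-factor estimate. Everything else is a direct citation of earlier results.
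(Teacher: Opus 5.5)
Your proposal is correct and follows essentially the same route as the paper: you verify the wedge structure and bounded geometry factorwise, use the horocycle sequence in the $\Hyp$ factor for $Y_b$, and exclude $X_b$ via \cref{l:wedge-tail} together with \cref{c:tree-log} and \cref{l:poly-pack} (with $Q=2$), then conclude with \cref{l:qi-invariance}. Your explicit sum bound for packing in the wedge is a spelled-out version of the paper's remark that an $R$-ball in the wedge lies in a finite union of $R$-balls in the factors.
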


\begin{proof}
A finite wedge of proper geodesic spaces is proper and geodesic.  Each of the pointed factors $T_D$, $E$, and $\Hyp$ contracts to its chosen basepoint by geodesic contraction, so the factorwise contractions combine to contract each finite wedge.  For bounded geometry, fix $r,R>0$ and bound uniformly in the center the sizes of $r$-separated subsets of $R$-balls.  The regular tree has this property because it has bounded valence, and the two manifolds $E$ and $\Hyp$ have it by homogeneity.  A finite wedge of bounded-geometry spaces again has bounded geometry, since an $R$-ball in the wedge is contained in a finite union of $R$-balls in the factors.

The positive statement for $Y_b$ follows from the horocycle sequence $h_n=(n,1)$, $n\in\Z$, in the $\Hyp$ factor: by \cref{p:horocycle},
\[
        d_{Y_b}(h_n,h_m)=d_{\Hyp}(h_n,h_m)\asymp \ell(|n-m|).
\]
Restricting to $n\ge1$ gives the semi-infinite version.

It remains to exclude such a sequence in $X_b$.  Suppose $(q_n)$ were $\ell$-distorted in $X_b=T_D\vee E$.  By \cref{l:wedge-tail}, a tail lies entirely in $T_D$ or entirely in $E$ and, after reindexing, remains $\ell$-distorted in that factor.  It cannot lie in $T_D$, because \cref{c:tree-log} rules out logarithmic distortion in simplicial trees of arbitrary degree.  It cannot lie in $E=\R^2$, because of the Euclidean packing obstruction.  By condition~\textup{(iv)} of \cref{p:distorted-normalizations}, after passing to a fixed arithmetic subsequence, the first $N$ terms give $\asymp N$ uniformly separated points in a ball of radius $O(\log N)$.  Such a Euclidean ball contains only $O((\log N)^2)$ uniformly separated points, which would force
\[
        N\lesssim (\log N)^2,
\]
and is impossible.  Equivalently, \cref{l:poly-pack} with $Q=2$ would give $N^{1/2}\lesssim_\infty \ell(N)$.  Hence $X_b$ has no $\ell$-distorted semi-infinite sequence.

Since the restriction of $\ell$ to $\N$ is a rate function, \cref{l:qi-invariance} would transfer the $\ell$-distorted sequence in $Y_b$ to $X_b$ under any quasi-isometry, contradicting this conclusion.
\end{proof}

\begin{prop}[Large-scale comparison for the bounded-geometry pair]\label{p:bounded-standard-invariants}
Let $o$ denote the wedge point in each of the spaces $X_b$ and $Y_b$.
\begin{enumerate}[label=\textup{(\roman*)}]
\item Both $X_b$ and $Y_b$ are proper geodesic simply connected spaces of bounded geometry, and neither space is Gromov-hyperbolic.
\item For $Z\in\{X_b,Y_b\}$, put
\[
        V_Z^{\mathrm{unif}}(R)=\sup_{z\in Z}\Pack_1(B_Z(z,R)).
\]
Then
\[
        V_{X_b}^{\mathrm{unif}}(R)\asymp (D-1)^R
        \asymp V_{Y_b}^{\mathrm{unif}}(R).
\]
\item Their asymptotic dimensions satisfy
\[
        \asdim X_b=\asdim Y_b=2.
\]
\item Their Lipschitz filling functions satisfy
\[
        \delta_{X_b}\filleq L^2,
        \qquad
        \delta_{Y_b}\filleq L^2.
\]
\item For every sequence $(r_i)$ with $r_i\to\infty$ and every non-principal ultrafilter $\omega$,
\[
        \operatorname{Cone}_\omega(X_b,o,(r_i))
        \cong
        \operatorname{Cone}_\omega(Y_b,o,(r_i)).
\]
\end{enumerate}
\end{prop}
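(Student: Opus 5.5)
We prove the five items in order, mirroring the proof of \cref{p:standard-invariants-agree}. Properness, geodesicity, contractibility and bounded geometry for (i) are already established in \cref{p:bounded-log-distinguishes}. For non-hyperbolicity, we use that the Euclidean factor $E$ is isometrically embedded and convex in each wedge, since geodesics between points of one factor stay in that factor. Large fat Euclidean triangles in $E$ are therefore geodesic triangles of $X_b$ and $Y_b$ that are not $\delta$-thin for any $\delta$.

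For (ii), the lower bound comes from the tree factor. A ball of radius $R$ in $T_D$ has $\asymp (D-1)^R$ vertices, and these are $1$-separated. For the upper bound we use an $R$-ball $B_Z(z,R)$ in the wedge. It is contained in the union of an $R$-ball in the factor containing $z$ and $R$-balls about $o$ in the other factors. We then need uniform bounds for $R$-balls in each factor.
\begin{itemize}
\item In $T_D$, the bound is $\lesssim (D-1)^R$.
\item In $E$, it is $\lesssim (1+R)^2$.
\item In $\Hyp$, it is $\lesssim e^{R}$, by the area estimate used in \cref{c:log-sharp-hyp}.
\end{itemize}
The key point is $D-1\ge 4>e$, so $e^R=o((D-1)^R)$. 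This is exactly where $D\ge5$ enters, and it gives $V^{\mathrm{unif}}\asymp (D-1)^R$ for both spaces. Some care is needed with the floor in $(D-1)^{\lfloor R\rfloor}$, but this only affects constants.

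For (iii), the lower bound $\asdim\ge 2$ holds because both spaces contain $E$ isometrically. The upper bound comes from the finite union theorem: $\asdim T_D=1$, $\asdim E=2$, and $\asdim\Hyp=2$. For (iv), the upper bound uses that $T_D$, $E$ and $\Hyp$ are CAT$(0)$, so their one-point wedges are CAT$(0)$ by Reshetnyak gluing. Reshetnyak majorization then gives $\operatorname{FillArea}\le L^2/4\pi$. For the lower bound we use the $1$-Lipschitz retraction onto $E$ and round circles in $E$. Stokes' formula \eqref{e:lipschitz-stokes} applied to $\eta=x\,dy$, together with \eqref{e:area-form-bound}, shows that any Lipschitz filling of a circle of length $L$ has area at least $L^2/4\pi$. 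Hence $\delta\filleq L^2$ for both spaces.

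Item (v) is the main obstacle. By \cref{l:wedge-cones}, $\operatorname{Cone}(X_b)\cong \mathcal T\vee\R^2$ and $\operatorname{Cone}(Y_b)\cong\mathcal T\vee\R^2\vee\mathcal U$. Here $\mathcal U$ is the Dyubina--Polterovich universal tree from $\Hyp$, and $\R^2$ is the cone of $E$ by self-similarity. For $\mathcal T$ we use Dyubina--Polterovich \cite[Theorem~1.3.2]{DyubinaPolterovich} again: cones of regular trees of degree at least $3$ are also the complete homogeneous $\R$-tree $\mathcal U$ of valence $2^{\aleph_0}$. Indeed $T_D$ is quasi-isometric to $\Hyp$-like trees, and cone isometry type at the chosen basepoint is all that is needed. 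Granting this, the two cones are $\mathcal U\vee\R^2$ and $(\mathcal U\vee\mathcal U)\vee\R^2$. As in \cref{p:standard-invariants-agree}, $\mathcal U\vee\mathcal U$ is a complete $\R$-tree with valence $2^{\aleph_0}$ at every point, so by \cite[Theorem~1.1.3(ii)]{DyubinaPolterovich} it is isometric to $\mathcal U$. By homogeneity this isometry can be chosen to fix the wedge point. Extending it by the identity on $\R^2$ gives the required pointed isometry. The delicate step is justifying that the cone of $T_D$ at its vertex is $\mathcal U$ for every scaling sequence and ultrafilter. If \cite{DyubinaPolterovich} does not state this directly for trees, we would verify the characterization instead: the cone is a complete $\R$-tree, homogeneous because $T_D$ is vertex-transitive, with branching of valence continuum at every point.
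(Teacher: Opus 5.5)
Items (i)--(iv) follow the paper's proof essentially verbatim; using $\eta=x\,dy$ instead of $\tfrac12(x\,dy-y\,dx)$ as the primitive of the area form changes nothing. The problem is the first justification you give in (v) for $\mathcal T\cong\mathcal U$.

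\cite[Theorem~1.3.2]{DyubinaPolterovich}, as used in this paper, concerns complete simply connected Riemannian manifolds with curvature bounded above by a negative constant. It says nothing directly about $T_D$. Your remark that $T_D$ is quasi-isometric to ``$\Hyp$-like trees'' and that this suffices does not hold up, for two reasons. First, a quasi-isometry induces only a bi-Lipschitz map of asymptotic cones, not an isometry, so a further argument would still be needed. Second, $T_D$ is not quasi-isometric to any such manifold: its Gromov boundary is a Cantor set, not a sphere. That sentence should be deleted.

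Your fallback is exactly the paper's argument, but as written it asserts the two key properties without proof. Here is what has to be supplied.

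\emph{Homogeneity.} Given a cone point $[x_i]$, choose vertices $v_i$ with $d(x_i,v_i)\le 1/2$ and automorphisms $g_i$ of $T_D$ sending the root to $v_i$. Since $d(v_i,\mathrm{root})=O(r_i)$, the map $[y_i]\mapsto[g_iy_i]$ is a well-defined isometry of the cone taking the basepoint to $[x_i]$.

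\emph{Valence exactly $2^{\aleph_0}$ at the basepoint.} For the lower bound, each of the $2^{\aleph_0}$ geodesic rays $\rho$ from the root gives a cone ray $t\mapsto[(\rho(\lfloor tr_i\rfloor))_i]_\omega$. Two distinct rays share only a finite initial segment, whose rescaled length tends to $0$. Hence the corresponding cone rays meet only at the basepoint and lie in distinct components of its complement. For the upper bound, the cone has cardinality at most $2^{\aleph_0}$, which bounds the valence from above.

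Homogeneity then spreads this valence to every point. Combined with completeness of asymptotic cones and \cite[Theorem~1.1.3(ii)]{DyubinaPolterovich}, this gives $\mathcal T\cong\mathcal U$. Your final step, $\mathcal U\vee\mathcal U\cong\mathcal U$ with the wedge point sent to the basepoint and extended by the identity on $\R^2$, then goes through as in the paper.
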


In particular, the cones in item~(v) have the same homeomorphism type and the same bi-Lipschitz type.

\begin{proof}
Properness, geodesicity, simple connectivity, and bounded geometry were proved in \cref{p:bounded-log-distinguishes}.  Both spaces contain an isometrically embedded copy of $\R^2$, so neither is Gromov-hyperbolic.

For item~(ii), use the notation from the statement.  In a finite one-point wedge, an $R$-ball is contained either in an $R$-ball of one factor or, if it reaches the wedge point, in the union of $R$-balls about the wedge point in the finitely many factors.  Hence $V_Z^{\mathrm{unif}}$ is equivalent to the maximum of the factors' uniform packing functions.

In the $D$-regular tree one has
\[
        V_{T_D}^{\mathrm{unif}}(R)\asymp (D-1)^R.
\]
The Euclidean factor contributes only polynomial growth, while the hyperbolic factor contributes $\asymp e^R$.  Since $D\ge 5$, we have $D-1>e$, and hence
\[
        e^R=o((D-1)^R).
\]
Hence
\[
        V_{X_b}^{\mathrm{unif}}(R)\asymp (D-1)^R\asymp V_{Y_b}^{\mathrm{unif}}(R),
\]
so uniform packing does not distinguish them.

For asymptotic dimension,
\[
        \asdim T_D=1,
        \qquad
        \asdim \R^2=2,
        \qquad
        \asdim \Hyp=2.
\]
The finite union theorem gives
\[
        \asdim X_b=\asdim Y_b=2.
\]

For the Lipschitz filling function of \cref{d:filling}, all factors are CAT$(0)$, and the Reshetnyak gluing theorem makes their one-point wedges CAT$(0)$ \cite[Chapter~II.11]{BH}.  The CAT$(0)$ quadratic isoperimetric inequality gives a quadratic upper bound; by Reshetnyak majorization, a loop of length $L$ admits a filling of area at most $L^2/(4\pi)$.

Conversely, collapsing the other factors to the wedge point gives a $1$-Lipschitz retraction onto the Euclidean plane, and composing a Lipschitz filling with this retraction cannot increase its parametrized area.  A Euclidean circle of radius $r$ has length $2\pi r$ and bounds a disk of area $\pi r^2=L^2/(4\pi)$.  Apply \eqref{e:lipschitz-stokes} to the standard primitive $\frac12(x\,dy-y\,dx)$ of the Euclidean area form and then use \eqref{e:area-form-bound}.  Every Lipschitz filling of the positively oriented circle therefore has area at least the enclosed disk area.  Thus the quadratic lower bound from $\R^2$ persists in each wedge. Hence
\[
        \delta_{X_b}\filleq L^2,
        \qquad
        \delta_{Y_b}\filleq L^2.
\]

Finally, fix the wedge point, a scaling sequence $(r_i)$, and a non-principal ultrafilter $\omega$.  By \cref{l:wedge-cones}, the cone is the wedge of the factor cones.  The asymptotic cone of the regular tree $T_D$ is a complete $\R$-tree.  To see homogeneity explicitly, let $[x_i]$ be a cone point and choose a vertex $v_i$ with $d_{T_D}(x_i,v_i)\le1/2$.  By vertex transitivity, choose a tree automorphism $g_i$ sending the fixed root to $v_i$.  The sequence $(g_i)$ induces a cone isometry sending the cone basepoint to $[v_i]=[x_i]$.

At the basepoint, each boundary ray $\rho$ of $T_D$ determines the cone ray
\[
        t\longmapsto
        \big[(\rho(\lfloor t r_i\rfloor))_i\big]_\omega,
        \qquad t\ge0.
\]
Two distinct boundary rays share only a finite initial segment, whose rescaled length tends to zero, so the corresponding cone rays lie in distinct components of the complement of the cone basepoint.  There are $2^{\aleph_0}$ such rays.  Since the cone itself has cardinality at most $2^{\aleph_0}$, the basepoint has valence exactly $2^{\aleph_0}$, and homogeneity gives the same valence at every point.  By uniqueness, this cone is the complete homogeneous $\R$-tree $\mathcal U$ of that valence.  Every asymptotic cone of $\Hyp$ is the same tree $\mathcal U$ \cite[Theorem~1.3.2]{DyubinaPolterovich}.  The Euclidean factor contributes $\R^2$.  Therefore
\[
        \operatorname{Cone}_\omega(X_b,o,(r_i))\cong \mathcal U\vee\R^2,
        \qquad
        \operatorname{Cone}_\omega(Y_b,o,(r_i))
        \cong \mathcal U\vee\R^2\vee\mathcal U.
\]
The wedge $\mathcal U\vee\mathcal U$ is a complete $\R$-tree, and every point still has valence $2^{\aleph_0}$.  By the uniqueness of the complete homogeneous $\R$-tree of this valence \cite[Theorem~1.1.3(ii)]{DyubinaPolterovich}, it is isometric to $\mathcal U$. By homogeneity, the isometry may be chosen to send the wedge point to the distinguished basepoint of $\mathcal U$.  Thus
\[
        \operatorname{Cone}_\omega(X_b,o,(r_i))
        \cong \operatorname{Cone}_\omega(Y_b,o,(r_i))
\]
for every choice of ultrafilter and scaling sequence, with the wedge point as basepoint.
\end{proof}

\begin{rem}[Why the bounded-geometry example uses logarithmic distortion]\label{r:bounded-log-not-sublog}
No sublogarithmic rate separates the bounded-geometry pair. By \cref{l:bg-exp-pack,l:exp-pack}, a bounded-geometry geodesic space admits no $o(\log N)$-distorted sequence.  The hyperbolic-plane factor supplies the sharp logarithmic rate through horocycles, while the Euclidean-plane factor is inserted only to equalize several standard invariants, including failure of hyperbolicity and quadratic filling behavior.
\end{rem}

\section{Summary and comparison tables}\label{s:summary}

The first table summarizes the power-rate classification. In the simplicial-tree row, ``has a ray'' and ``has a line'' refer to a geodesic ray and a bi-infinite geodesic line, respectively.

\begin{center}
\small
\setlength{\tabcolsep}{5pt}
\renewcommand{\arraystretch}{1.15}
\begin{tabular}{c|c|c}
Target space & semi-infinite powers $\N$ & bi-infinite powers $\Z$ \\
\hline
$\R$, Euclidean & $\alpha=1$ & $\alpha=1$ \\
$\R^k$, $k\ge2$ & $\alpha\in(1/k,1]$ & $\alpha\in(1/k,1]$ \\
Geodesic $\delta$-hyperbolic $X$ & \shortstack{$\alpha=1$ iff\\$\partial X\ne\varnothing$} & \shortstack{$\alpha=1$ iff\\$|\partial X|\ge2$} \\
$\Hyp$ & $\alpha=1$ & $\alpha=1$ \\
Simplicial tree $T$ & $\alpha=1$ iff $T$ has a ray & $\alpha=1$ iff $T$ has a line
\end{tabular}
\end{center}
\normalsize

In the hyperbolic-space row, $\partial X$ denotes the sequential Gromov boundary.  The second table summarizes the logarithmic and sublogarithmic results. Unless stated otherwise, positive entries apply to both index sets, while negative entries already rule out semi-infinite sequences.

\begin{center}
\small
\setlength{\tabcolsep}{5pt}
\renewcommand{\arraystretch}{1.15}
\begin{tabular}{c|c|c}
Target space & logarithmic distortion & sublogarithmic distortion \\
\hline
$\R^k$ & impossible & impossible \\
$\Hyp$ & possible by horocycles & impossible \\
Finitely generated groups & varies with the group & impossible for $o(\log N)$ \\
Simplicial trees & impossible & impossible for unbounded $f$ \\
Exponential-packing spaces & varies with the space & impossible \\
Arbitrary $\delta$-hyperbolic spaces & not determined by hyperbolicity alone & possible in general
\end{tabular}
\end{center}
\normalsize

The Euclidean impossibility in the first row follows from packing: $N$ separated points cannot fit inside a ball of radius $O(\log N)$ in $\R^k$, since such a ball contains only $O((\log N)^k)$ separated points. For finitely generated groups the sublogarithmic impossibility is \cref{c:fg-no-sublog}. For simplicial trees the stronger logarithmic impossibility is \cref{c:tree-log}. \Cref{s:qi-diagnostic} gives two diagnostic pairs. The first pair consists of proper geodesic $\delta$-hyperbolic spaces with unbounded geometry and the same basepoint coarse packing growth, asymptotic dimension, linear filling behavior, and asymptotic-cone type at the natural basepoint, but with $f_0(N)=\log\log(e^e+N)$ realized in one space and not the other. The second pair has bounded geometry: $X_b=T_D\vee\R^2$ and $Y_b=T_D\vee\R^2\vee\Hyp$ have the same uniform coarse packing growth, asymptotic dimension, quadratic filling behavior, asymptotic-cone type at the natural basepoint, and the same non-hyperbolic status, but logarithmic distortion is realized only in $Y_b$.

\section{Open problems}\label{s:open-problems}

We conclude with several problems concerning the sequence distortion spectra $\mathcal D_I(X)$ introduced in \cref{d:sequence-spectrum}.

\begin{prob}[Critical snowflake endpoints]\label{p:open-critical-endpoints}
Find general intrinsic conditions which rule out bi-Lipschitz embeddings
\[
        ([0,1], |s-t|^{1/Q})\longrightarrow X
\]
into a $Q$-dimensional model space $X$ at the critical exponent.
\end{prob}

In \cref{s:euclidean} the compactness argument produces a bounded-turning arc; Euclidean doubling then makes it a quasiarc, and quasisymmetric dimension distortion rules out full ambient dimension. A broader formulation should identify which combination of doubling, bounded turning, and ambient regularity yields the same endpoint obstruction in Ahlfors regular spaces.

\begin{prob}[Non-power rates in Euclidean spaces]\label{p:open-euclidean-rates}
Let $k\ge1$ and $I\in\{\N,\Z\}$. Determine $\mathcal D_I(\R^k)$ for rate functions beyond powers. In particular, if $f$ is a rate function satisfying
\[
        N=o(f(N)^k)
        \qquad\text{and}\qquad
        f(N)\preceq N,
\]
when does $[f]_{\simeq}\in\mathcal D_I(\R^k)$?
\end{prob}

The results settle power functions.  By \cref{l:poly-pack}, packing gives the necessary lower bound $N^{1/k}\lesssim_\infty f(N)$, while a polygonal path gives the trivial upper scale $N$. The positive examples for powers use self-similar chord-arc snowflakes. A complete answer for general $f$ would require either a flexible theory of quasiarcs with prescribed gauge or a proof that further regularity of $f$ is unavoidable.

\begin{prob}[Logarithmic distortion in hyperbolic groups]\label{p:open-hyperbolic-groups-log}
Characterize finitely generated word-hyperbolic groups whose Cayley graphs contain logarithmically distorted sequences.
\end{prob}

By \cref{l:qi-invariance}, the property is quasi-isometry invariant. Closed hyperbolic surface groups have it, because they are quasi-isometric to $\Hyp$ and horocycles in $\Hyp$ give logarithmic distortion. Free groups do not have it, because a Cayley graph with respect to a free basis is a tree and \cref{t:simplicial-trees} rules out all unbounded sublinear rates. A boundary-theoretic characterization might involve visual metrics, connectedness properties, quasiarcs in the boundary, or coarse horospherical directions.

\begin{prob}[Rates in negatively curved warped products]\label{p:open-warped-rates}
For which rate functions $f$ with $f(N)\preceq \log(1+N)$ does there exist a proper geodesic CAT$(-1)$ space, or a complete simply connected negatively curved surface, containing an $f$-distorted horocyclic sequence?
\end{prob}

\Cref{ex:loglog} shows that $f(N)=\log\log(e^e+N)$ can occur once bounded geometry is dropped. In a warped metric of the form
\[
        dt^2+e^{-2\varphi(t)}dx^2,
\]
the horocyclic distance scale is governed roughly by the inverse relation $\varphi(T)\asymp \log N$. Suitable choices of $\varphi$ may therefore realize many sublogarithmic functions. A classification should identify which choices are compatible with curvature bounds, completeness, properness, and prescribed local geometry.

\begin{prob}[Bounded-geometry hyperbolic diagnostic pairs]\label{p:open-bounded-hyp-diagnostic}
Determine whether there exist $\delta\ge0$, proper geodesic simply connected bounded-geometry $\delta$-hyperbolic spaces $X,Y$, and basepoints $o_X\in X$, $o_Y\in Y$ such that
\begin{enumerate}[label=\textup{(\roman*)}]
\item $\mathcal D_I(X)\ne\mathcal D_I(Y)$ for at least one $I\in\{\N,\Z\}$;
\item with
\[
        V_Z^{\mathrm{unif}}(R)=\sup_{z\in Z}\Pack_1(B_Z(z,R)),
        \qquad Z\in\{X,Y\},
\]
one has $V_X^{\mathrm{unif}}(R)\asymp V_Y^{\mathrm{unif}}(R)$;
\item $\asdim X=\asdim Y$;
\item $\delta_X\filleq\delta_Y$ for the Lipschitz filling functions;
\item for every sequence $(r_i)$ with $r_i\to\infty$ and every non-principal ultrafilter $\omega$,
\[
        \operatorname{Cone}_\omega(X,o_X,(r_i))
        \cong
        \operatorname{Cone}_\omega(Y,o_Y,(r_i)).
\]
\end{enumerate}
\end{prob}

The unbounded-geometry pair in \cref{p:seq-distinguishes,p:standard-invariants-agree} is itself hyperbolic and is separated by the log--log rate. The bounded-geometry pair in \cref{p:bounded-log-distinguishes,p:bounded-standard-invariants} is separated by the logarithmic rate, but it is made non-hyperbolic by the common $\R^2$ factor. A bounded-geometry example with both spaces hyperbolic would be sharper. Such an example would have to avoid the sublogarithmic obstruction of \cref{l:exp-pack}, so logarithmic distortion is the natural first test rate.

\begin{prob}[How much cone data determines sequence distortion?]\label{p:open-cones}
Let $X$ and $Y$ be proper geodesic spaces. Suppose that, for every sequence $(r_i)$ with $r_i\to\infty$, every non-principal ultrafilter $\omega$, and all basepoint sequences $(x_i)$ in $X$ and $(y_i)$ in $Y$, the cones
\[
        \operatorname{Cone}_\omega(X,(x_i),(r_i))
        \quad\text{and}\quad
        \operatorname{Cone}_\omega(Y,(y_i),(r_i))
\]
are bi-Lipschitz equivalent. Must
\[
        \mathcal D_I(X)=\mathcal D_I(Y)
\]
hold for each $I\in\{\N,\Z\}$?
\end{prob}

The examples in \cref{s:qi-diagnostic} compare cones only at the natural wedge basepoint, not all cones. Since the compactness arguments in this note pass from a distorted sequence to a snowflaked interval inside an asymptotic cone, sufficiently uniform cone data should constrain sequence distortion, but sequence distortion is also a global realization problem.

\begin{prob}[One-sided versus two-sided spectra]\label{p:open-one-two}
\leavevmode
\begin{enumerate}[label=\textup{(\roman*)}]
\item For which large-scale classes of spaces does
\[
        \mathcal D_{\N}(X)=\mathcal D_{\Z}(X)?
\]
\item Under what hypotheses on $X$ does the following implication hold: if $f$ is a rate function and $X$ contains an $f$-distorted sequence indexed by $\N$, then $X$ contains a $g$-distorted sequence indexed by $\Z$ for some rate function $g\simeq f$?
\end{enumerate}
\end{prob}

The Euclidean and hyperbolic-plane constructions are bi-infinite; restriction gives semi-infinite examples.  At the linear scale in a geodesic hyperbolic space, \cref{p:hyperbolic-boundary-linear} identifies the distinction exactly: a semi-infinite sequence exists if and only if the sequential boundary is nonempty, whereas a bi-infinite sequence exists if and only if the boundary contains at least two points.  Trees with a ray but no bi-infinite line give the simplest examples where these conditions differ.  Beyond the hyperbolic setting, the general obstruction should be related to ends, bottlenecks, and the existence of two-sided coarse rays in compatible directions.

\begin{prob}[Sequence distortion in finitely generated groups]\label{p:open-groups}
For each $I\in\{\N,\Z\}$, describe $\mathcal D_I(G)$ for finitely generated groups $G$ with word metrics, especially within standard quasi-isometry classes such as nilpotent groups, relatively hyperbolic groups, mapping class groups, and right-angled Artin groups.
\end{prob}

The results give several boundary cases: finitely generated groups never realize nondecreasing $o(\log N)$ functions by \cref{c:fg-no-sublog}; word-hyperbolic groups realize no powers $N^\alpha$ with $\alpha<1$ by \cref{t:delta-power}; and the discrete Heisenberg group realizes the square-root rate in its center. A systematic theory would relate $\mathcal D_I(G)$ to subgroup distortion, divergence, Morse directions, peripheral subgroups, and the geometry of asymptotic cones.

\begin{prob}[Stability under coarse constructions]\label{p:open-coarse-constructions}
For each $I\in\{\N,\Z\}$, determine how $\mathcal D_I(X)$ behaves under standard large-scale constructions, such as products, trees of spaces, relatively hyperbolic cusped spaces, warped cones, and wreath products?
\end{prob}

The finite wedge lemma in \cref{s:qi-diagnostic} gives one simple case: an unbounded-rate distorted sequence in a finite wedge eventually lies in one factor. Products and trees of spaces should be subtler, because a sequence may distribute its motion among several directions. Understanding these operations could sharpen sequence distortion as a quasi-isometry invariant and yield examples beyond the model geometries considered here.

\section{Disclosure of AI use}

ChatGPT was used in preparing this manuscript. The author independently checked and edited all mathematical arguments and takes full responsibility for the final content.

\end{document}